\theoremstyle{plain}
\newtheorem{theorem}{Theorem}[section]
\newtheorem{corollary}[theorem]{Corollary}
\newtheorem{conjecture}[theorem]{Conjecture}
\newtheorem{lem}[theorem]{Lemma}
\newtheorem{prop}[theorem]{Proposition}
\newtheorem{problem}[theorem]{Problem}
\theoremstyle{definition}
\newtheorem{df}[theorem]{Definition}
\newtheorem{remark}[theorem]{Remark}
\newtheorem{example}[theorem]{Example}
\newtheorem{exercise}[theorem]{Exercise}
\def\be{\begin{equation}}
\def\ee{\end{equation}}
\def\bt{\begin{theorem}}
\def\et{\end{theorem}}
\def\bc{\begin{corollary}}
\def\ec{\end{corollary}}
\def\br{\begin{remark}}
\def\er{\end{remark}}
\def\bp{\begin{prop}}
\def\ep{\end{prop}}
\def\bpr{\begin{problem}}
\def\epr{\end{problem}}
\def\bl{\begin{lem}}
\def\el{\end{lem}}
\def\bn{\begin{enumerate}}
\def\en{\end{enumerate}}
\def\bex{\begin{example}}
\def\eex{\end{example}}
\def\bd{\begin{df}}
\def\ed{\end{df}}
\def\bx{\begin{exercise}}
\def\ex{\end{exercise}}
\def\ben{\begin{enumerate}}
\def\een{\end{enumerate}}
\newcommand\sL{\mathcal{L}}
\newcommand{\lk}{{\mathbb C \rm{lk}}}
\newcommand{\un}{\underline}
\def\pp{{\bf p}}
\def\uu{{\bf u}}
\newcommand\bR{{\mathbb{R}}}
\newcommand\bC{{\mathbb{C}}}
\DeclareMathOperator{\pED}{pEDdeg}
\DeclareMathOperator{\conv}{conv}
\DeclareMathOperator{\codim}{codim}              
\DeclareMathOperator{\reg}{reg}                  
\DeclareMathOperator{\id}{id}                    
\DeclareMathOperator{\Eu}{Eu}
\DeclareMathOperator{\LOdeg}{LOdeg}
\DeclareMathOperator{\MLdeg}{MLdeg}
\DeclareMathOperator{\EDdeg}{EDdeg}
\DeclareMathOperator{\EDdef}{EDdefect}
\DeclareMathOperator{\Sing}{Sing}
\def\bC{\mathbb{C}}
\def\RR{\mathbb{R}}
\def\cM{\mathcal{M}}
\def\bP{\mathbb{P}}
\def\cO{\mathcal{O}}
\def\lra{\longrightarrow}
\def\bQ{\mathbb{Q}}
\def\bZ{\mathbb{Z}}
\def\C{\mathbb{C}}
\newcommand{\linearf}{f} 
\newcommand{\Hf}{H} 
\newcommand{\Hone}{H_P^{(1)}}
\newcommand{\Htwo}{H_P^{(2)}} 
\newcommand{\Hd}{H_P^{(d)}}
\newcommand{\Hk}{H_P^{(k)}}
\newcommand{\HkMinusOne}{H_P^{(k-1)}}
\newcommand{\T}{T}  			
\newcommand{\CStar}{{\mathbb{C}^*}}
\newcommand{\CStarN}{\left({\CStar}\right)^N}
\newcommand{\CStarNplusOne}{\left({\CStar}\right)^{N+1}}
\newcommand{\CC}{\mathbb{C}}
\newcommand{\lagObjective}{\Lambda}
\DeclareMathOperator{\PD}{PD}
\newcommand{\leftrarrows}{\mathrel{\raise.75ex\hbox{\oalign{%
  $\scriptstyle\leftarrow$\cr
  \vrule width0pt height.5ex$\hfil\scriptstyle\relbar$\cr}}}}
\newcommand{\lrightarrows}{\mathrel{\raise.75ex\hbox{\oalign{%
  $\scriptstyle\relbar$\hfil\cr
  $\scriptstyle\vrule width0pt height.5ex\smash\rightarrow$\cr}}}}
\newcommand{\Rrelbar}{\mathrel{\raise.75ex\hbox{\oalign{%
  $\scriptstyle\relbar$\cr
  \vrule width0pt height.5ex$\scriptstyle\relbar$}}}}
\def\leftrightarrowsfill@{\arrowfill@\leftrarrows\Rrelbar\lrightarrows}
\newcommand{\xleftrightarrows}[2][]{\ext@arrow 3399\leftrightarrowsfill@{#1}{#2}}
\title[Applied algebraic geometry and algebraic statistics]{Applications of singularity theory \\ in applied algebraic geometry and algebraic statistics}
\author[L. Maxim]{Lauren\c{t}iu G. Maxim}
\address{L. Maxim: Department of Mathematics, University of Wisconsin-Madison, USA.}
\email {maxim@math.wisc.edu}
\author[J. I. Rodriguez]{Jose Israel Rodriguez}
\address{J. I. Rodriguez: Department of Mathematics, University of Wisconsin-Madison, USA.}
\email {jose@math.wisc.edu}
\author[B. Wang]{Botong Wang}
\address{B. Wang: Department of Mathematics, University of Wisconsin-Madison, USA.}
\email {wang@math.wisc.edu}
\begin{document}

\maketitle


\begin{abstract} We survey recent applications of topology and singularity theory in the study of the algebraic complexity of concrete optimization problems in applied algebraic geometry and algebraic statistics.\end{abstract}



\tableofcontents




\section{Introduction}

This paper surveys recent developments in the study of the algebraic complexity of concrete optimization problems in applied algebraic geometry and algebraic statistics. We will focus here on our own work on the \index{Euclidean distance degree} {\it Euclidean distance (ED) degree}, which is an algebraic measure of the complexity of nearest point problems, as well as on the \index{maximum likelihood degree} {\it maximum likelihood (ML) degree}, which measures the algebraic complexity of the maximum likelihood estimation. 
For complete details, the interested reader may consult \cite{MRW, MRW2, MRW3, MRW4, MRW5}. 

\smallskip

Without being particularly heavy on technical details, it is our hope that the results and techniques described in this note are of equal interest for pure mathematicians and applied scientists: besides acquainting applied scientists with a variety of tools from topology, algebraic geometry and singularity theory, the interdisciplinary nature of the work presented here should lead pure mathematicians to become more acquainted with a myriad of tools used in more applied research fields, such as computer vision, semidefinite programming, phylogenetics, etc.

\smallskip

We begin our introduction with a brief synopsis of \index{optimization} {\it optimization}.
Given {\index{data point} a {\it data point} $\un u \in \bR^n$ and an {\it objective function} $f_{\un u}:\bR^n \to \bR$ depending on $\un u$, a constrained \index{optimization problem} {\it optimization problem} has the form
\begin{center}
$\min / \max \ f_{\un u}(\un x)$
\end{center}
subject to polynomial constraints
\begin{center}
$g_1(\un x)=\cdots = g_k(\un x)=0$.
\end{center} 
In other words,  one aims to optimize the function $f_{\un u}$ over the real algebraic variety $$X:=V(g_1,\ldots,g_k),$$ which oftentimes is a {\it statistical model}.
To find the optimal solution, one first finds the \index{critical point} {\it critical points} of $f_{\un u}$ over $X$, i.e., smooth points $x \in X_{\rm reg}$ at which the gradient $\nabla f_{\un u}(x)$ is perpendicular to the tangent space $T_xX_{\rm reg}$ 
(or, more generally, find stratified critical points of $f_{\un u}$ on $X$).

In practice, one considers $f_{\un u}$ and $g_1,\ldots,g_k$, as {complex} functions, i.e., we regard $f_{\un u}$ as a complex  function defined on the complex variety (also denoted by $X$) defined by the Zariski closure of $X$ in $\bC^n$. 
For simplicity, one can further assume that $X$ is irreducible, and require $f_{\un u}$ to be holomorphic and have certain good properties (e.g., gradient-solvable). 
Then, for a general data point $\un u$, the number of {complex} critical points of $f_{\un u}$ on $X_{\rm reg}$ is finite and it is independent of $\un u$; it is called the \index{algebraic degree} {\it algebraic degree} of the given optimization problem. 
It is in fact a theorem that these notions are well defined.
The algebraic degree measures the \index{algebraic complexity}  {\it algebraic complexity} of the optimal solution of the optimization problem, and it is a good indicator of the running time needed to solve the problem exactly.

\smallskip

The main optimization problems considered in this survey paper are:
\begin{itemize}
\item[(a)] \index{nearest point problem} {\it nearest point problem (NPP) / ED optimization}: $X\subset \bR^n$ is an algebraic model (i.e., defined by polynomial equations), and 
\be\label{sed} f_{\un u}(\un x)=d_{\un u}(\un x)=\sum_{i=1}^n (x_i-u_i)^2\ee
is the \index{squared Euclidean distance} {squared Euclidean distance} from a general data point $\un u \in \bR^n$ to $X$. The corresponding algebraic degree  is called  the \index{Euclidean distance degree} {\it Euclidean distance (ED) degree of $X$}, and it is denoted by ${\rm EDdeg}(X)$. See Section \ref{NPP}.
\item[(b)] \index{maximum likelihood estimation} {\it maximum likelihood estimation (MLE)}: $X$ is a statistical model (family of probability distributions) and 
\be\label{lf} f_{\un u}(\un x)=\ell_{\un u}(\un x)=\prod_{i=1}^n p_i(\un x)^{u_i}\ee is a \index{likelihood function} {\it likelihood function} associated to the data point ${\un u} =(u_1,\ldots,u_n)$. The corresponding algebraic degree  is called  the \index{maximum likelihood degree} {\it maximum likelihood (ML) degree}. See Section \ref{MLE}.
\item[(c)]  \index{linear optimization} {\it linear optimization}: $X$ is an algebraic model and 
\be\label{lin}
\ell_{\un u}(\un x)=\sum_{i=1}^n u_i x_i \ee
is (the restriction to $X_{\rm reg}$ of) a general linear function. The corresponding algebraic degree  is called  the \index{linear optimization degree} {\it linear optimization (LO) degree}. See Section \ref{lin}.
\end{itemize}

In what follows, we devote separate sections to each of the above optimization problems, explaining the main results of our work over the last several years, along with the main constructions and ideas. 
Section \ref{NPP} deals with nearest point problems and the ED degree. We describe here a topological interpretation of the ED degree of an affine variety, and we explain how to apply it to the resolution of the multiview conjecture of \cite{DHOST}. In Section \ref{MLE} we introduce the ML degree and explain a proof of the Huh-Sturmfels involution conjecture from \cite{HS}. Section \ref{lin} details the linear optimization problem. We introduce here invariants similar to those of the MLE and explain their relation to the polar degrees from projective geometry. Finally, Section \ref{nong} presents an alternative approach to polynomial optimization problems via morsification; this is particularly useful when the data point ${\un u}$ is non-generic. Technical preliminaries from singularity theory are collected in Section \ref{prelim}, where we also include our recent formula from  \cite{MRW5} which computes Chern classes via logarithmic cotangent bundles. Overview subsections devoted to other developments in the field are included at the end of Sections \ref{NPP} and \ref{MLE}.

\medskip



\section{Preliminaries}\label{prelim}
In this section, we first recall some relevant terminology from singularity theory.
 Our aim is to help the non-expert reader to get a basic understanding of the important concepts of Whitney stratifications, constructible functions and characteristic cycles, Milnor fibers and vanishing cycles, and Chern-MacPherson classes for singular varieties. 
For more details, the reader is referred to classical references like \cite{Di, Max, MS, Sch}. Secondly, in Subsection \ref{log} we present a recent formula from \cite{MRW5} for computing Chern classes of quasi-projective varieties in terms of log geometry.

\subsection{Whitney stratification}\label{sec121}
Let $X$ be a complex algebraic variety. As is well known \cite{Wh1, Wh2}, such a variety can be endowed with a \index{Whitney stratification} {\it Whitney stratification}, that is, a (locally) finite partition $\mathscr{X}$ into non-empty, connected, locally closed nonsingular subvarieties $V$ of $X$ \index{strata} (called {\it strata}) which satisfy the following properties. 
\begin{enumerate}
\item[(a)] \index{frontier condition} {\it Frontier condition}: for any stratum $V \in \mathscr{X}$, the frontier $\partial V:={\bar V} \setminus V$ is a union of strata of $\mathscr{X}$, where ${\bar V}$ denotes the closure of $V$.
\item[(b)] \index{constructibility} {\it Constructibility}: the closure  	${\bar V}$ and the frontier $\partial V$ of any stratum $V \in \mathscr{X}$ are closed complex algebraic subspaces in $X$.
\end{enumerate}
In addition, whenever two strata $V$ and $W$ are such that $W \subseteq {\bar V}$, the pair $(W, {\bar V})$ is required to satisfy certain regularity conditions that guarantee that the variety $X$ is topologically equisingular along each stratum. 
For a recent algorithmic construction of Whitney stratification, see \cite{HN}.

\bex
A smooth complex algebraic variety $X$ is Whitney stratified with strata $V$ given by the connected components of $X$.
\eex

\bex
If $X$ is a complex algebraic variety whose singular locus is a finite set of points $s_1, \ldots, s_r$, then a Whitney stratification of $X$ can be given with strata $$\{X_{\reg}, \{s_1\}, \ldots, \{s_r\} \},$$ where $X_{\reg}$ denotes the locus of smooth points of $X$.
\eex

\bex[Whitney umbrella]\label{Wa} \index{Whitney umbrella}
Let $X$ be defined by $x^2=zy^2$ in $\bC^3$. The singular locus of $X$ is the $z$-axis, but the origin is ``more singular'' than any other point on the $z$-axis. A Whitney stratification of $X$ has strata
\[
V_1=X \setminus \{z-\text{axis}\},\quad 
V_2=\{(0,0,z) \mid z \neq 0\},\quad
V_3=\{(0,0,0)\}.
\]
\eex

\bex[Matrices with bounded rank]
Fix positive integers $r \leq s \leq t$. 
The variety of bordered-rank ($\leq r$) matrices
$$X_r:=\big\{ x=[x_{ij}] \in \bC^{s\times t} \mid {\rm rank}(x) \leq r \big\}$$
is Whitney stratified by the rank condition.
\eex


\subsection{Constructible functions and local Euler obstruction}\label{ss:constructibleFunctions}
Let $X$ be a complex algebraic variety with a Whitney stratification $\mathscr{X}$. 
A function $\alpha:X \to \bZ$ is called \index{constructible function} {\it $\mathscr{X}$-constructible} if $\alpha$ is constant along each stratum $V \in \mathscr{X}$. We say that $\alpha:X \to \bZ$ is constructible if it is $\mathscr{X}$-constructible for some Whitney stratification $\mathscr{X}$ of $X$.

For example, a constant function on $X$ (e.g., the  function $1_X$) is constructible. Moreover, if $\mathscr{X}$ is a Whitney stratification of $X$ and $V$ is a stratum in $\mathscr{X}$, the \index{indicator function} indicator function for $V$, that is 
\[
1_V:X\to\mathbb{Z},\quad 1_{V}(x)=
\begin{cases}
1 & x\in V\\
0 & \text{otherwise}
\end{cases}
\]
is $\mathscr{X}$-constructible.

The \index{Euler characteristic} Euler characteristic of a $\mathscr{X}$-constructible function $\alpha$ is the Euler characteristic of $X$ weighted by $\alpha$, that is,
\be \chi(\alpha):=\sum_{V \in \mathscr{X}} \chi(V) \cdot \alpha(V),
\ee
where $\alpha(V)$ is the (constant) value of $\alpha$ on the stratum $V \in \mathscr{X}$ and $\chi(V)$ is the \index{topological Euler characteristic} {\it topological Euler characteristic} of $V$. 

\bex By the additivity of the topological Euler characteristic in the complex algebraic context, one has
$$\chi(1_X) =\sum_{V \in \mathscr{X}} \chi(V) =\chi(X).$$
\eex

A fundamental role in singularity theory is played by the \index{local Euler obstruction} {\it local Euler obstruction} 
$$\Eu_X:X \to \bZ,$$ a constructible function defined by MacPherson in \cite{MP}, which is an essential ingredient in the definition of Chern classes for singular varieties.
The interested reader may consult, e.g., \cite{Bra} for an accessible introduction to the theory of characteristic classes for singular varieties.

The precise definition of the local Euler obstruction function is not needed here, but see, e.g., \cite[Section 4.1]{Di} or \cite{Bra} for an introduction. Let us only mention that $\Eu_X$ is constant along the strata of a fixed Whitney stratification of $X$, i.e., $\Eu_X$ is $\mathscr{X}$-constructible for any Whitney stratification $\mathscr{X}$. Moreover, if $x \in X$ is a smooth point then $\Eu_X(x)=1$, so in particular $\Eu_X=1_X$ if $X$ is nonsingular. On the other hand, $\Eu_X$ is sensitive to the presence of singularities: e.g., if $X$ is a curve, then $\Eu_X(x)$ is the multiplicity of $X$ at $x$.
 
 \bex[Nodal curve] \index{nodal curve}
Let $X$ be defined by the equation $xy=0$ in $\bC^2$. The origin $(0,0)$ is the unique singular point of $X$ and it has multiplicity $2$. A Whitney stratification of $X$ can be given with strata $V_1=X\setminus \{(0,0)\}$ and $V_2=\{(0,0)\}$.
Therefore, $\Eu_X$ takes the value $1$ on the smooth stratum $V_1$, and it takes the value  $2$ on $V_2$. 
\eex

 \bex[Whitney umbrella] \index{Whitney umbrella} 
Let $X$ be defined by $x^2=zy^2$ in $\bC^3$. A Whitney stratification of $X$ with strata $V_1$, $V_2$, $V_3$ is described in Example \ref{Wa}. The local Euler obstruction function $\Eu_X$ has values $1$, $2$ and $1$ along the strata $V_1$, $V_2$ and $V_3$, respectively (e.g., \cite[Example 4.3]{RW18} for details).
 \eex
 
 \bd The Euler characteristic $\chi(\Eu_X)$ of the local Euler obstruction function is called the \index{Euler-Mather characteristic}  {\it Euler-Mather characteristic} of $X$.
\ed

\br
As we will see later on, these Euler characteristics give a topological meaning to various algebraic degrees of optimization.
\er
 
We denote by $CF_{\mathscr{X}}(X)$ the abelian group of $\mathscr{X}$-constructible functions on an algebraic variety $X$ with a fixed Whitney stratification $\mathscr{X}$. This is a free abelian group with basis $\{1_V \mid V \in \mathscr{X}\}$. 
We also let $CF(X)$ be the abelian group of functions $\alpha:X \to \bZ$ which are constructible with respect to some Whitney stratification of $X$. Note that $CF(X)$ can also be defined as the free abelian group generated by \index{indicator function} indicator functions $1_Z$ of closed irreducible subvarieties $Z$ of $X$. In this language, the Euler characteristic of constructible functions is the unique linear map $$\chi:CF(X) \longrightarrow \bZ$$
defined on generators by $\chi(1_Z):=\chi(Z).$

Another distinguished basis of $CF(X)$ is given by the functions $\Eu_Z$, for $Z \subset X$ a closed irreducible subvariety. Here, $\Eu_Z$ is regarded as a constructible function on $X$ by extension by $0$ on $X \setminus Z$. Similarly, the corresponding basis for $CF_{\mathscr{X}}(X)$ consists of $\{\Eu_{\bar V} \mid V \in \mathscr{X}\}$.


\subsection{Hypersurface singularities. Milnor fiber}\label{ss:HypersurfaceSingularities}
Let $X$ be a complex algebraic variety, and let $f\colon X \to \bC$ be a non-constant algebraic  function. Denote by $X_t:=f^{-1}(t)$ the \index{hypersurface} hypersurface in $X$ defined by the fiber of $f$ over $t \in \bC$. We restrict $f$ to a small $\delta$-tube $T(X_0)$ around $X_0$ so that $f:T(X_0) \setminus X_0 \to D^*_\delta$ is a topologically locally trivial fibration, where $D^*_\delta$ is a punctured disc centered at $0 \in \bC$ of radius $\delta$ small enough.

For $x \in X_0=f^{-1}(0)$, let $B_\varepsilon(x)$ be an open ball of radius $\varepsilon$ in $X$, defined by using an embedding of the germ $(X,x)$ in an affine space $\bC^N$. Then 
\begin{equation}\label{eq:Fx}
F_x:=B_\varepsilon(x) \cap X_t
\end{equation}
for $0 < \vert t \vert \ll \delta \ll \varepsilon$ 
is called the \index{Milnor fiber} {\it Milnor fiber of $f$ at $x$}. It was introduced in \cite{Mil68}; see also  \cite[Chapter 10]{Max} or \cite{Sea} for a quick introduction and overview.

Assume now that $X$ is a nonsingular variety of complex dimension $n+1$.
If $x \in X_0$ is a smooth point, the Milnor fiber $F_x$ is contractible. If $x \in X_0$ is an {\it isolated} singularity, then $$F_x \simeq \bigvee_{\mu_x} S^n$$ has the homotopy type of a bouquet of $n$-dimensional spheres, whose homology classes are called \index{vanishing cycles} {\it vanishing cycles}; the number of these vanishing cycles is called the \index{Milnor number} {\it Milnor number of $f$ at $x$}, denoted by $\mu_x$, which can be computed algebraically as
$$\mu_x=\dim_{\bC} \bC\{x_0,\ldots ,x_n\}/{\left(\frac{\partial f}{\partial x_0},\ldots , \frac{\partial f}{\partial x_n}\right)},$$
where $\bC\{x_0,\ldots ,x_n\}$ is the $\bC$-algebra of analytic function germs defined at $x$ (with respect to a choice of coordinate functions in an analytic neighborhood of $x$). More generally, the Milnor fiber at a point in a stratum $V$ of a Whitney stratification of $X_0$ has the homotopy type of a finite CW complex of real dimension $n-\dim V$.

Let us also note here that it follows from Thom's second isotopy lemma (e.g., see \cite{Mat}) that the topological type of Milnor fibers is constant along the strata of a Whitney stratification $\mathscr{X}$ of $X_0$. For this reason, we will often denote by $F_V$ the Milnor fiber of $f$ at some point in the stratum $V \in \mathscr{X}$. 

\bex[Whitney umbrella] \index{Whitney umbrella} 
Consider the complex hypersurface $X_0=f^{-1}(0) \subset \bC^3$ defined by the polynomial  $f(x,y,z)=x^2-zy^2$.
A Whitney stratification of $X_0$ was given in Example \ref{Wa}, with strata  
\[
V_1=X \setminus \{z-\text{axis}\},\quad 
V_2=\{(0,0,z) \mid z \neq 0\},\quad
V_3=\{(0,0,0)\}.
\]
The Milnor fiber at any point in $V_1$ is contractible, the Milnor fiber at any point in $V_2$ is homotopy equivalent to a circle $S^1$, and the Milnor fiber at the point $V_3$ (the origin) is homotopy equivalent to a $2$-sphere $S^2$; e.g., see \cite[Chapter 10]{Max} for details.
\eex


\subsection{Nearby and vanishing cycle functors}\label{neva}
The fact that the topological type of Milnor fibers is constant along the strata of a Whitney stratification allows us to encode the (reduced) Euler characteristics of Milnor fibers in a constructible function. In this subsection, we focus on two functors associated to $f$ which are defined around this idea. While there is also a sheaf theoretical counterpart of these functors (e.g., see \cite{Di, Max, Sch}), we only need here their interpretation in terms of constructible functions.

Let $f\colon X \to \bC$ be a non-constant algebraic function defined on a complex algebraic variety $X$, with  $X_0:=f^{-1}(0)$. 

The \index{nearby cycle functor} {\it nearby cycle functor} of $f$,
$$\psi_f\colon CF(X) \to CF(X_0)$$
is defined as follows. For $\alpha \in CF(X)$, $\psi_f(\alpha)$ is the constructible function on $X_0$ whose value at $x \in X_0$ is given by
$$\psi_f(\alpha)(x):=\chi(\alpha \cdot 1_{F_x}),$$
where $F_x$ denotes the Milnor fiber of $f$ at $x$ (cf. Section~\ref{ss:HypersurfaceSingularities}), and ``$\cdot$'' stands for the multiplication of constructible functions.
In particular,
$\psi_f(1_X)$ is the constructible function on $X_0$ whose value at $x \in X_0$ is given by the Euler characteristic $\chi(F_x)$ of the Milnor fiber $F_x$ at $x$. 

The \index{vanishing cycle functor} {\it vanishing cycle functor} of $f$ is  defined as
\[
\varphi_f\colon CF(X) \to CF(X_0),\quad 
\alpha\mapsto \varphi_f(\alpha):=\psi_f(\alpha) - \alpha\vert_{X_0}.
\]
In particular,
$$\varphi_f(1_X)=\psi_f(1_X)-1_{X_0} \in CF(X_0)$$
is the constructible function whose value on a stratum $V$ of $X_0$ is given by the Euler characteristic of the reduced cohomology of the Milnor fiber $F_V$ at some point in $V$, i.e., 
$$\varphi_f(1_X)\vert_V=\chi(\widetilde{H}^*(F_V;\bQ)).$$

If $X$ is smooth, the fact that Milnor fibers at smooth points of $X_0$ are contractible implies that the constructible function $\varphi_f(1_X)$ is in this case supported on the singular locus of $X_0$. More generally, if $X$ is endowed with a Whitney stratification $\mathscr{X}$ and $\alpha \in CF_{\mathscr{X}}(X)$, then $\varphi_f(\alpha)$ is supported on $X_0 \cap {\rm Sing}_{\mathscr{X}}(f)$, with $${\rm Sing}_{\mathscr{X}}(f):=\bigcup_{V \in \mathscr{X}} {\rm Sing}(f\vert_V)$$
denoting the \index{stratified singular locus} {\it stratified singular locus} of $f$ with respect to $\mathscr{X}$.


\subsection{Conormal varieties. Characteristic cycles}
Let $X$ be a smooth complex algebraic variety, and denote as before by $CF(X)$ the group of (algebraically) constructible functions on $X$, i.e., the free abelian group generated by indicator functions $1_Z$ of closed irreducible subvarieties $Z$ of $X$.  

Let $L(X)$ be the free abelian group spanned by the irreducible \index{conic Lagrangian cycle} conic Lagrangian cycles in the \index{cotangent bundle} cotangent bundle $T^*X$. Recall that irreducible conic Lagrangian cycles in $T^*X$ correspond to the conormal spaces $T^*_ZX$, for $Z$ a closed irreducible subvariety of $X$.  
Here, for such a closed irreducible subvariety  $Z$ of $X$ with smooth locus $Z_{\reg}$, its \index{conormal variety} conormal variety $T^*_{Z}X$ is defined as the closure in $T^*X$ of 
\[
T^*_{Z_{\reg}}X:=\{
(z,\xi)\in T^*X \mid z \in Z_{\reg}, \ \xi \in T^*_zX, \ \xi\vert_{T_zZ_{\rm reg}}=0 \}.
\]

The \index{characteristic cycle} characteristic cycle functor $CC$ establishes a group isomorphism
$$CC\colon CF(X) \lra L(X),$$ 
which, for a closed irreducible subvariety $Z$ of $X$, satisfies:
\be\label{cc} CC(\Eu_Z)=(-1)^{\dim Z}\cdot T^*_Z X.\ee
(Recall that the collection $\{\Eu_Z\}$, for $Z \subset X$ a closed irreducible subvariety, forms a basis of $CF(X)$.)


\subsection{Chern classes of singular varieties}
In \cite{MP}, MacPherson extended the notion of Chern classes to singular complex algebraic varieties by defining a natural transformation
$$c_*:CF(-) \lra A_*(-)$$
from the functor $CF(-)$ of constructible functions (with proper morphisms) to Chow (or Borel-Moore) homology $A_*(-)$, such that if $X$ is a smooth variety then $c_*(1_X)=c^*(TX) \cap [X]$. Here,  $c^*(TX)$ denotes the total cohomology Chern class of the tangent bundle $TX$, and $[X]$ is the fundamental class of $X$.

\bd
For $\varphi \in CF(X)$, we call $c_*(\varphi)\in A_*(X)$ the \index{MacPherson Chern class} {\it MacPherson Chern class} of $\varphi$. Similarly, we call
$$\check{c}_*(\varphi):= \sum_{j\geq 0}(-1)^j \cdot {c}_{j}(\varphi),$$
the \index{signed MacPherson Chern class} {\it signed MacPherson Chern class} of $\varphi$, 
with ${c}_{j}(\varphi)\in A_j(X)$ denoting the $j$-th component of $c_*(\varphi)$.
\ed

For any locally closed irreducible subvariety
$Z$ of a complex algebraic variety $X$, the function $1_Z$ is constructible on $X$, and the class 
\be\label{csm} c_*^{SM}(Z):=c_*(1_Z) \in A_*(X)\ee
is usually referred to as the \index{Chern-Schwartz-MacPherson class} {\it Chern-Schwartz-MacPherson (CSM) class} of $Z$ in $X$. Similarly, 
the class \be\label{cma} c_*^{Ma}(Z):=c_*(\Eu_Z) \in A_*(X)\ee is called the \index{Chern-Mather class} {\it Chern-Mather class} of $Z$, where we regard the local Euler obstruction function $\Eu_Z$ as a constructible function on $X$ by setting the value zero on $X \setminus Z$.

Results of Ginsburg \cite{Gin} and Sabbah \cite{Sab} provided a microlocal interpretation of Chern classes, by showing that McPherson's Chern class transformation $c_*$ factors through the group of conic Lagrangian cycles in the cotangent bundle. We recall this construction below, following, e.g., \cite{AMSS}.


\subsection{Microlocal interpretation of Chern classes}

Let $E$ be a rank $r$ vector bundle on the smooth complex algebraic variety $X$. Let $\overline{E}\coloneqq \bP(E\oplus \mathbf{1})$ be the projective bundle, which is a fiberwise compactification of $E$ (with $\mathbf{1}$ denoting the trivial line bundle on $X$). Then $E$ may be identified with the open complement of $\bP(E)$ in $\overline{E}$. Let $\pi:E\to X$ and $\bar{\pi}:\overline{E} \to X$ be 
the projections, and let $\xi:=c^1(\mathcal{O}_{\overline{E}}(1))$
 be the first Chern class of the hyperplane line bundle on $\overline{E}$.
Pullback via ${\bar \pi}$ realizes $A_*(\overline{E})$ as a $A_*(X)$-module. 
An irreducible conic $d_C$-dimensional subvariety $C \subset E$
determines a ${d_C}$-dimensional cycle $\overline{C}$ in $\overline{E}$ and one can express $[\overline{C}] \in A_{d_C}(\overline{E})$ uniquely as:
\begin{equation}\label{sh}
    [\overline{C}]=\sum_{j={d_C}-r}^{d_C} \xi^{j-{d_C}+r} \cap {\bar \pi}^* c^E_j(C),
\end{equation}
for some $c^E_j(C) \in A_{j}(X)$. The classes $$c_{{d_C}-r}^E(C), \ldots, c_{d_C}^E(C)$$ defined by \eqref{sh} are called the \index{Chern classes} {\it Chern classes of $C$}. The sum \[ c_*^E(C)=\sum_{j={d_C}-r}^{d_C} c^E_j(C)\] is called the \index{shadow} {\it shadow} of $[\overline{C}]$. 
Let us note that if $C$ is supported on $E\vert_Z$ for a closed subset $i \colon Z \hookrightarrow X$, then $c_j^E(C)=i_* c_j^{E\vert_Z}(C)$; in particular, $c_j^E(C)=0$ for $j>\dim Z$.

For our applications, we will mainly work with conic Lagrangian cycles in cotangent bundles, in which case we have ${d_C}=r$.  If in this case we assume moreover that $E=X \times \bC^r$ is a trivial bundle (as in our later applications), then equation \eqref{sh} translates into 
\be\label{triv}
[\overline{C}]=\sum_{j=0}^r c_j^E(C) \boxtimes [\bP^{r-j}] \in A_*(X \times \bP^r).
\ee

The use of terminology ``Chern classes of $C$"
 is justified by the following result, applied to the cotangent bundle $T^*X$ and elements of the group $L(X)$ of conic Lagrangian cycles:

\bp{\rm \cite[Proposition~3.3]{AMSS}}\label{pmc}
For any constructible function $\varphi \in CF(X)$, the Chern classes of the characteristic cycle $CC(\varphi)$ are equal to  the \index{signed MacPherson Chern class} signed MacPherson Chern classes of $\varphi$, i.e., 
\begin{equation}\label{mc}
    c_j^{T^*X}\left(CC(\varphi)\right)=(-1)^j \cdot  c_j(\varphi) \in A_j(X), \ \ j=0,\ldots, \dim(X),
\end{equation}
where  $c_j(\varphi)$ denotes the $j$-th component of MacPherson's  Chern class $c_*(\varphi)$.
\ep

If $Z\subset X$ is a closed irreducible subvariety, one gets from \eqref{cc} and \eqref{mc} the following identity:
\be\label{tc}
c_*^{T^*X}(T_{Z}^*X)=(-1)^{\dim Z}\sum_{j\geq 0}(-1)^j{c}^{Ma}_{j}(Z)=(-1)^{\dim Z} \cdot \check{c}^{Ma}_{*}(Z) \in A_*(X).
\ee

\subsection{Chern classes via logarithmic cotangent bundles}\label{log}

In this subsection, we describe a result from \cite{MRW5}, which is particularly useful for calculating the Chern-Mather classes of affine and, resp., very affine varieties. \index{very affine variety}

Let $X$ be a smooth complex algebraic variety, and let $D\subset X$ be a normal crossing divisor. Let $U:=X\setminus D$ be the complement $j:U\hookrightarrow X$ the open  inclusion. Let $\Omega_X^1(\log D)$ be the sheaf of algebraic one-forms with logarithmic poles along $D$, and denote the total space of the corresponding vector bundle by \index{logarithmic cotangent bundle} $T^*(X, D)$. Note that $T^*(X, D)$ contains $T^*U$ as an open subset. Given a conic Lagrangian cycle $\Lambda$ in $T^*U$, we denote its closure in $T^*(X, D)$ by $\overline\Lambda_{\log}$. With these notations, one has the following result.

\bt\label{thm_m}{\rm \cite[Theorem 1.1]{MRW5}}  
Let $\varphi \in CF(U)$ be any constructible function on $U$. Then 
\be\label{eq_main}
c^{T^*(X, D)}_*\Big(\overline{CC(\varphi)}_{\log}\Big)=c^{T^*X}_*\big(CC(\varphi)\big) \in A_*(X),
\ee
where, if $CC(\varphi)=\sum_{k}n_k\Lambda_k$, then $\overline{CC(\varphi)}_{\log}\coloneqq\sum_{k}n_k(\overline{\Lambda_k})_{\log}$. Here, on the right-hand side of \eqref{eq_main}, $\varphi$ is regarded as a constructible function on $X$ by extension by zero.
\et

In particular, if $\varphi = \Eu_Z$ for $Z \subset U$ an irreducible subvariety, then for $\Lambda=T_{Z}^*U$ we get from \eqref{tc} and \eqref{eq_main} that:
\be\label{8}
c^{T^*(X, D)}_*(\overline\Lambda_{\log})=(-1)^{\dim Z}\sum_{j\geq 0}(-1)^j{c}^{Ma}_{j}(Z) =(-1)^{\dim Z} \cdot \check{c}^{Ma}_{*}(Z) \in A_*(X).
\ee


\br
If $X$ is projective, 
 the corresponding degree formula in \eqref{eq_main} was proved in \cite{WZ}. Moreover, if $\varphi=1_U$, extended by $0$ to $X$, formula \eqref{eq_main} reduces in this case to a well known formula of Aluffi \cite{Al1, Al2}: 
\be
	(-1)^n \cdot c^*\left( \Omega^1_X(\log D) \right) \cap [X]= \check{c}_*(1_U)=:\check{c}_*^{SM}(U) \in A_*(X),
\ee
with $n=\dim U$, and where the right hand side denotes the signed CSM class of $U$. 
\er

\br
Let us note that if $D=\emptyset$, i.e., $U=X$, then Theorem \ref{thm_m} is a tautology, as both sides compute $\check{c}_*(\varphi)$ via Proposition \ref{pmc}.
\er

While the proof of Theorem \ref{thm_m} is too technical to be discussed in a survey, let us only mention here that it can be reduced to earlier works of Ginsburg (\cite[Theorem 3.2]{Gin}). Applications of Theorem \ref{thm_m} will be given in the subsequent sections \ref{MLE} and \ref{lin}, for computing Chern-Mather classes of (very) affine varieties (in relation to maximum likelihood estimation and, resp., liniar optimization).

\medskip

For later comparison to our work, let us also mention here that in \cite[Lemme 1.2.1]{Sab}, Sabbah obtained a different kind of formula for Chern-Mather classes, which only applies in the context of {\it closed} irreducible subvarieties of a smooth ambient variety. We formulate here the complex algebraic version of Sabbah's formula.

Let $X$ be a smooth complex algebraic variety, and $Z \subset X$ an irreducible closed subvariety.
Let $T^*_ZX$ be the conormal variety of $Z$, and 
consider its {\it projectivization} $$C(Z,X):=\bP(T^*_ZX) \subset \bP(T^*X).$$
Let $\tau:C(Z,X) \to Z$ be the restriction of the projection $\bP(T^*X)\to X$ to $C(Z,X)$.
With these notations, Sabbah proved the following formula 
\bt{\rm \cite[Lemme 1.2.1]{Sab}} \label{Sabbah} 
\be\label{sf2}
c_*^{Ma}(Z)=(-1)^{\dim X - 1 - \dim Z} c^*(TX\vert_Z) \cap \tau_*\left( c(\cO(1))^{-1} \cap  [C(Z,X)] \right) \in A_*(Z),
\ee
where $\cO(1)$ is the dual of the tautological line bundle on $\bP(T^*_ZX)$ restricted to $C(Z,X)$.
\et

\br
Let us note that Sabbah's formula is evaluated in the Chow (or Borel-Moore) homology $A_*(Z)$ of the subvariety $Z$ itself, i.e., Sabbah works with $c_*\colon CF(Z) \to A_*(Z)$ and $c_*^{Ma}(Z)=c_*(\Eu_Z)$. One can, of course, push this class forward into $A_*(X)$ under the closed embedding $Z \hookrightarrow X$ using functorial properties of MacPherson's Chern class transformation, and this is in fact the way Aluffi \cite{Al3} or Parusi\'nki-Pagacz \cite{PP} use this formula to compute Chern-Mather classes of projective varieties in $\bC \bP^n$ (evaluated in $A_*(\bC \bP^n)$). However, Sabbah's formula \eqref{sf2} doesn't work well for a (very) affine variety $Z$, in which case we will use the above Theorem \ref{thm_m} to be able to relate the conormal variety of $Z$ with the projective geometry.
\er


\section{Nearest point problems. Euclidean distance degree}\label{NPP}

Many models in data science or engineering are \index{algebraic model} {\it algebraic models} (i.e., they can be realized as real algebraic varieties $X \subset \bR^n$) 
for which one needs to solve a \index{nearest point problem} {\it nearest point problem}. Specifically, for such an algebraic model $X \subset \bR^n$ and a generic  \index{data point} {\it data point} $\underline{u}=(u_1,\ldots, u_n) \in \bR^n$, one is interested to find a nearest point $\underline{u}^* \in X_{\rm reg}$ to $\underline{u}$, i.e., a point $\underline{u}^*$ which minimizes the \index{squared Euclidean distance} squared Euclidean distance $d_{\un u}$ from the given data point $\un u \in \bR^n$. (Here, $X_{\rm reg}$ denotes the smooth locus of $X$.)

The algebraic degree of the corresponding NPP is called 
 the \index{Euclidean distance degree} {\it Euclidean distance (ED) degree of $X$}, and it is denoted by ${\rm EDdeg}(X)$. The Euclidean distance degree was introduced in \cite{DHOST} as an algebraic measure of complexity of the nearest point problem, and has since been extensively studied in areas like computer vision  \cite{AH,HL,MRW}, biology \cite{GHRS}, chemical reaction networks \cite{AdHe}, engineering \cite{CNAS,SHA}, numerical algebraic geometry \cite{Hau, MR}, data science \cite{HW}, etc.

\subsection{Classical examples of nearest point problems}
Let us briefly indicate two main examples of nearest point problems. The interested reader may consult, e.g., \cite[Section 3]{DHOST} and the references therein for more such examples.
\bex[Low-rank approximation]\label{EY} \index{low-rank approximation}
Fix positive integers $r \leq s \leq t$ and set $n=st$. 
Consider the following {model} of bordered-rank ($\leq r$) matrices: 
$$X_r:=\big\{ X=[x_{ij}] \in \bR^{s\times t} \mid {\rm rank}(X) \leq r \big\} \subset \bR^n.$$
As generic data point, we choose a general $s \times t$ matrix $U=[u_{ij}] \in \bR^{s\times t}=\bR^n$. The nearest point problem can be solved in this case by using the \index{singular value decomposition} {\it singular value decomposition}. Indeed, the general matrix $U$ admits a product decomposition
$$U=T_1 \cdot {\rm diag}(\sigma_1,\ldots,\sigma_s) \cdot T_2,$$
where $\sigma_1>\cdots > \sigma_s$ are the {\it singular values} of the matrix $U$ (all of which can be assumed non-zero since $U$ is general), and $T_1$, $T_2$ are orthogonal matrices. Then the \index{Eckart-Young Theorem}
{\it Eckart-Young Theorem} (e.g., see \cite[Example 2.3]{DHOST}) states that the matrix of rank $\leq r$ closest to $U$ is:
$$U^*=T_1 \cdot {\rm diag}(\sigma_1,\ldots,\sigma_r, 0, \ldots, 0) \cdot T_2 \in X_r.$$
The other critical points of the squared distance function $d_U$ are given by
$$T_1 \cdot {\rm diag}(0,\ldots,0,\sigma_{i_1},0,\ldots,0,\sigma_{i_r}, 0, \ldots, 0) \cdot T_2,$$
where $\{i_1<\ldots < i_r\}$ runs over all $r$-element subsets of $\{1,\ldots, s\}$.
In particular, there are $s \choose r$ critical points of the squared distance function $d_U$, all of which are real matrices of rank exactly $r$. (Note that the regular part of $X_r$ consists exactly of rank-$r$ matrices.)
\eex

\bex[Triangulation problem in computer vision]\label{cv} 
In \index{computer vision} computer vision \cite{HZ}, \index{3D triangulation} triangulation (or 3D-reconstruction) refers to the process of reconstructing a point in the three-dimensional (3D) space from its two-dimensional (2D) projections in $m \geq 2$ cameras in general position.  The triangulation problem has  many  practical  applications,  e.g.,  in  tourism,  for  reconstructing  the  3D  structure of a tourist attraction based on a large number of online pictures \cite{AS};  in robotics,  for creating a virtual 3D space from multiple cameras mounted on an autonomous vehicle \cite{MR14}; for modeling clouds \cite{KK}; in filmmaking, for adding animation and graphics to a movie scene after everything is already shot, etc. If the 2D projections are given with infinite precision, then two cameras suffice to determine the 3D point.  In practice, however, various sources of ``noise'' (lens distortion, pixelation, etc.) lead to inaccuracies in the measured image coordinates.  The problem, then, is to find a 3D point which optimally fits the measured image points.

The algebraic model fitting the triangulation problem is the space of all possible $m$-tuples of such 2D projections with infinite precision, called the \index{multiview variety} {\it affine multiview variety} $X_m$; see \cite[Example 3.3]{DHOST} and \cite[Section 4]{MRW} for more details. The above optimization problem translates into finding a point $\underline{u}^*\in X_m$ of minimum distance to a (generic) point $\underline{u} \in \bR^{2m}$ obtained by collecting the 2D coordinates of $m$ ``noisy'' images of the given 3D point. Once $\underline{u}^*$ is obtained, a 3D point is recovered by triangulating any two of its $m$ projections. As already indicated above, in order to find such a minimizer $\underline{u}^*$ algebraically, one regards $X_m$ as a complex algebraic variety and examines all complex critical points of the squared Euclidean distance function $d_{\underline{u}}$ on $X_m$. Under the assumption that $m\geq 3$, the complex algebraic variety $X_m$ is smooth and $3$-dimensional, and one is then interested in computing the Euclidean distance degree $\EDdeg(X_m)$ of the affine multiview variety $X_m$.

An explicit conjectural formula for the Euclidean distance degree $\EDdeg(X_m)$ was proposed in \cite[Conjecture 3.4]{DHOST}, based on numerical computations from \cite{SSN} for configurations involving $m \leq 7$ cameras:
\begin{conjecture}[Multiview conjecture]\label{cedc} \index{multiview conjecture} The Euclidean distance degree of the affine multiview variety $X_m$ is given by:
\be\label{edc}
\EDdeg(X_m)=\frac{9}{2}m^3-\frac{21}{2}m^2+8m-4.
\ee
\end{conjecture}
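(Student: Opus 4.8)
The plan is to compute $\EDdeg(X_m)$ through the topological interpretation of the Euclidean distance degree of an affine variety to be developed in Section~\ref{NPP}. For a smooth subvariety $X\subset\bC^n$ not contained in the isotropic quadric, that interpretation expresses $\EDdeg(X)$, up to the sign $(-1)^{\dim X}$, as a weighted Euler characteristic of $X$ with its ``boundary at infinity'' removed; equivalently it gives $\EDdeg(X)=\GED(X)-\EDdef(X)$, where the generic ED degree $\GED(X)=\sum_{i}\delta_i(\bar X)$ is the sum of the polar degrees of the projective closure $\bar X\subset\bP^n$, and the defect $\EDdef(X)$ is supported on the locus where $\bar X$ fails to meet the isotropic quadric at infinity transversally. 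For the multiview variety this defect is nonzero, which is precisely why a naive polar-degree count does not already yield \eqref{edc}.

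First I would make the geometry of $X_m$ explicit. For $m\ge 3$ the rational map $\bP^3\dashrightarrow(\bP^2)^m$ defined by the $m$ camera projections resolves to a closed embedding of the blow-up $\widetilde{\bP^3}:=\mathrm{Bl}_{c_1,\dots,c_m}\bP^3$ of $\bP^3$ at the $m$ (generic) camera centers, and $\bar X_m\subset(\bP^2)^m$ is its image, polarized by $L:=\sum_{i=1}^m\pi_i^*\,\mathcal O_{\bP^2}(1)$. This identifies the Chow ring, the total Chern class, and the polar classes of $\bar X_m$ with standard intersection-theoretic data on a blow-up of projective $3$-space. From here $\GED(X_m)$ is obtained by a single computation on $\widetilde{\bP^3}$ (degrees of the signed Chern-Schwartz-MacPherson class, or of the polar classes, against powers of $L$), producing a cubic polynomial in $m$; this step is lengthy but mechanical.

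Second---and this is the heart of the matter---I would analyze the boundary. One must describe $\bar X_m\setminus X_m$, i.e.\ the part of the multiview variety lying over $(\bP^2)^m\setminus(\bC^2)^m$, together with the restriction to it of the isotropic quadric $Q=\{\,\sum_{i=1}^{m}\big((x_i^{(1)})^2+(x_i^{(2)})^2\big)=0\,\}$, and then stratify the locus where the intersection degenerates. The relevant combinatorics is carried by the exceptional divisors $E_1,\dots,E_m$ of $\widetilde{\bP^3}$ and by the epipoles $e_{ij}$ (the image of camera center $c_j$ in camera $i$), of which there are $m(m-1)$. On each stratum the contribution to $\EDdef(X_m)$ is a local invariant, computed as a weighted Euler characteristic of the Milnor fibre / vanishing cycles of the relevant distance-type function restricted to $\bar X_m$ along that stratum---in the spirit of the nearby and vanishing cycle formalism of Section~\ref{neva}. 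Summing these local contributions, and checking that the per-epipole and per-exceptional-divisor terms are ``uniform'' in $m$ so that the total stays polynomial, is where the real work lies.

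Finally I would add up $\EDdeg(X_m)=\GED(X_m)-\EDdef(X_m)$, simplify, and verify that the result equals $\tfrac{9}{2}m^3-\tfrac{21}{2}m^2+8m-4$, cross-validating against the numerical values for $m\le 7$ recorded in \cite{SSN,DHOST} (for instance $6,47,148$ for $m=2,3,4$). I expect the main obstacle to be the correct bookkeeping of the boundary geometry of $\bar X_m$ and of the associated local Milnor-fibre contributions to the ED defect; by contrast, $\GED(X_m)$ is essentially classical intersection theory on $\mathrm{Bl}_{m\text{ pts}}\bP^3$, and the final polynomial identification is routine.
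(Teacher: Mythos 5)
Your proposal shares the paper's topological starting point---Theorem~\ref{EDa} and formula \eqref{edafs}---but then branches in a genuinely different direction. The paper proves Theorem~\ref{214} by computing $\chi(X_m\setminus Q_{\un u})$ directly via inclusion--exclusion on the compactification $Y_m=\overline{X_m}\subset(\bC\bP^2)^m$ (the same blow-up of $\bC\bP^3$ at the $m$ camera centers you identify): with $D_\infty=Y_m\setminus X_m$ and $D_{\un u}=Y_m\cap\overline{Q_{\un u}}$, one reduces to computing $\chi(Y_m)$, $\chi(D_\infty)$, $\chi(D_{\un u})$ and $\chi(D_\infty\cap D_{\un u})$, the hard one being $\chi(D_{\un u})$ because $D_{\un u}$ is a hypersurface with one-dimensional singular locus. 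You instead propose the split $\EDdeg(X_m)=\GED(X_m)-\EDdef(X_m)$, a defect decomposition. This is coherent, but two caveats are in order. First, the paper's defect theorem (Theorem~\ref{defth}, from \cite{MRW3}) is stated for smooth \emph{projective} varieties, i.e.\ for $\pED$ of affine cones; $X_m$ is affine but not a cone, so you would need an affine analogue, and in any case \cite{MRW3} postdates the resolution of this conjecture in \cite{MRW}. Second, the part you defer as ``where the real work lies''---Milnor-fibre bookkeeping at the $m(m-1)$ epipoles and $m$ exceptional divisors, which must sum to the implied defect $\tfrac{3}{2}m(m-1)(m-2)$ against a generic ED degree of $6m^3-15m^2+11m-4$ (the footnote's Harris--Lowengrub bound)---is comparable in difficulty to the paper's computation of $\chi(D_{\un u})$. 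In short, the paper's direct inclusion--exclusion on $Y_m$ buys freedom from both an affine defect theorem and a stratum-by-stratum vanishing-cycle accounting, whereas your route buys a cleaner conceptual separation into a transversal term (polar degrees) plus a correction supported on the degenerate boundary.
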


This conjecture was  the main motivation for the introduction of the Euclidean distance degree in \cite{DHOST}.
\eex A proof of Conjecture \ref{cedc} was obtained in \cite{MRW} for $m\geq 3$ cameras in general position, by first giving a purely topological interpretation of the Euclidean distance degree of any complex affine variety as an Euler-Mather characteristic involving MacPherson's local Euler obstruction function.
This approach will be explained in Section \ref{affine} below. In Section \ref{proj}, we discuss topological formulae for the (projective) ED degree of complex projective varieties (cf. \cite{MRW2}), answering positively a conjecture of Aluffi-Harris \cite{AH}. Section \ref{defect} deals with a computation of the ED degree of a smooth projective variety $Y$ in terms of {\it generic} ED degrees associated to the singularities of a certain hypersurface on $Y$ (cf. \cite{MRW3}).

\medskip


\subsection{ED degrees of complex affine varieties. Multiview conjecture}\label{affine}
In this section we explain how to compute the Euclidean distance degree of a  complex affine variety as an Euler(-Mather) characteristic. We apply this computation to the resolution of the multiview conjecture (Conjecture \ref{cedc}).

\subsubsection{Euclidean distance degree}
Let us first recall the following definition from \cite{DHOST}:
\bd\label{def21} The \index{Euclidean distance degree} {\it Euclidean distance (ED) degree} {$\EDdeg(X)$} of an irreducible closed variety $X \subset \bC^n$ (e.g., the complexification of a real algebraic model) is the number of complex critical points of $$d_{\underline{u}}(\un{x})=\sum_{i=1}^{n} (x_i-u_i)^2 $$ on the smooth locus $X_{\rm reg}$ of $X$, for a general ${\underline{u}}=(u_1,\ldots, u_n) \in \bC^n$.
\ed

\bex
Every linear space $X$ has ED degree $1$.
\eex

\bex\label{mat}
As already discussed in Example \ref{EY}, if $X_r$ denotes the variety of $s \times t$ real matrices (with $s \leq t$) of rank at most $r$, then $\EDdeg(X_r)={s \choose r}$.
\eex

A general upper bound on the ED degree in terms of the defining polynomials of the variety can be given as follows.
\bp {\rm \cite[Proposition 2.6]{DHOST}} \
Let $X  \subset \bC^n$ be a variety of codimension $c$ that is cut out by polynomials $g_1, g_2, \ldots, g_c, \ldots, g_k$ of degrees $d_1 \geq d_2\geq \cdots \geq d_c \geq \cdots \geq d_k$. Then
\begin{equation}
\EDdeg(X) \leq d_1 d_2 \cdots d_c \cdot \sum_{i_1+i_2+\cdots+i_c \leq n-c} (d_1-1)^{i_1} (d_2-1)^{i_2}\cdots (d_c-1)^{i_c}.
\end{equation}
Equality holds when $X$ is a general complete intersection of codimension $c$ (hence $c=k$).
\ep

\begin{remark} Let us explain here the reason for the use of the term ``degree'' in Definition \ref{def21}, see \cite[Theorem 4.1]{DHOST} for complete details. For an irreducible closed variety $X \subset \bC^n$ of codimension $c$, consider the \index{ED correspondence} {\it ED correspondence} $\mathcal{E}_X$ defined as the topological closure in $\bC^n \times \bC^n$ of the set of pairs $(\un x, \un u)$ such that ${\un x} \in X_{\rm reg}$ is a critical point of $d_{\underline{u}}$. Note that $\mathcal{E}_X$ can be identified with the \index{conormal space} conormal space $T^*_X\bC^n$ of $X$ in $\bC^n$. In particular, the first projection $\pi_1:\mathcal{E}_X \to X$ is an affine vector bundle of rank $c$ over $X_{\rm reg}$, whereas for general data points $\underline{u}\in \bC^n$ the second projection $\pi_2:\mathcal{E}_X \to \bC^n$ has finite fibers $\pi_2^{-1}(\underline{u})$ of cardinality equal to $\EDdeg(X)$. 
\end{remark}


\subsubsection{Topological interpretation of ED degrees}

Our approach to studying ED degrees in \cite{MRW} makes use of Whitney stratifications and constructible functions, as introduced in Sections \ref{sec121} and \ref{ss:constructibleFunctions}.

Our main result from \cite{MRW} expresses the ED degree as an Euler characteristic and is precisely stated as follows.

\bt[\cite{MRW}]\label{EDa}
Let $X \subset \bC^n$ be an irreducible closed subvariety. Then, for general $\underline{u}=(u_0,\ldots,u_n) \in \bC^{n+1}$, we have:
\be\label{edaf} {\EDdeg(X)=(-1)^{\dim X} \cdot \chi({\rm Eu}_{X \setminus Q_{\underline{u}}})},\ee
{where $Q_{\underline{u}}=\{x\in \bC^n : \sum_{i=1}^n (x_i-u_i)^2=u_0\}$. } \newline
In particular, if $X$ is smooth (e.g., the affine multiview variety), then
\be\label{edafs} \EDdeg(X)=(-1)^{\dim X} \cdot\chi(X \setminus Q_{\underline{u}})\ee
for general $\underline{u}=(u_0,\ldots,u_n)  \in \bC^{n+1}$.
\et

\bex If $X=\bC$ is a complex line, then \eqref{edaf} yields:
$$\EDdeg(X)=-\chi(X \setminus Q_{\underline{u}})=-\left( \chi(X) - \chi(X \cap Q_{\underline{u}}) \right)=-(1-2)=1.$$
\eex

\bex
Consider the singular model given by the \index{cardiod curve} {\it cardioid curve} $X \subset \bC^2$ defined by $(x^2+y^2+x)^2 =x^2+y^2$.
This model has a unique singular point of multiplicity $2$ at the origin in $\bC^2$, and $X_{\reg}\cong \bC \bP^1 \setminus \{3 \ {\rm points}\}$.
Moreover, for generic $\un u$,  $X$ intersects  $Q_{\underline{u}}$ at $4$ smooth points. Then our topological formula \eqref{edaf} yields
$$\EDdeg(X)=-\chi({\rm Eu}_{X \setminus Q_{\un{u}}})=-(2-5)=3.$$
\eex

For the proof of Theorem \ref{EDa}, we first {\it linearize} the optimization problem by considering the closed embedding 
\begin{center} 
$i: \bC^n\hookrightarrow \bC^{n+1} \ , \ \  (x_1, \ldots, x_n)\mapsto ( x_1^2+\cdots+x_n^2, x_1, \ldots, x_n).$
\end{center}
Indeed, if $w_0, \ldots, w_{n}$ are the coordinates of $\bC^{n+1}$, then the function 
$\sum_{1\leq i\leq n}(x_i-u_i)^2-u_{0}$ on $\bC^n$ is the pullback of the (generic) linear function
$$w_{0}+\sum_{1\leq i\leq n}-2u_i w_i+ \sum_{1\leq i\leq n}u_i^2-u_{0}$$
on $\bC^{n+1}$.
The computation of the ED degree $\EDdeg(X)$ amounts now to counting the number of complex critical points of a generic linear function on the regular part of the affine variety  $i(X)\subset \bC^{n+1}$. Theorem \ref{EDa} is then a consequence of the following more general result from stratified Morse theory, e.g., see \cite{STV}, but also \cite{MRW6} as in Corollary \ref{cor_linear} below.
\bt\label{edf}{\rm \cite[Equation (2)]{STV}} \ 
Let $X\subset \bC^n$ be an irreducible closed subvariety. Let $\ell: \bC^n\to \bC$ be a general linear function, and let $H_c$ be the hyperplane in $\bC^n$ defined by the equation $\ell=c$ for a general $c\in \bC$. Then the number of critical points of $\ell |_{X_{\rm reg}}$ equals $${(-1)^{\dim_{\bC} X} \cdot \chi({\rm Eu}_{X\setminus H_{c}})}.$$
\et

When $X$ is smooth (e.g., the affine multiview variety), one can give a simpler proof of \eqref{edafs} by the following Lefschetz-type result applied to the smooth affine variety $i(X)$:
\bt\label{Le} {\rm \cite[Theorem 3.1]{MRW}} \ 
Let $X\subset \bC^n$ be a smooth closed subvariety of complex dimension $d$. Let $\ell: \bC^n\to \bC$ be a general linear function, and let $H_c$ be the hyperplane in $\bC^n$ defined by the equation $\ell=c$ for a general $c\in \bC$. Then:
\begin{itemize}
\item[(a)] $X$ is homotopy equivalent to $X \cap H_c$ with finitely many $d$-cells attached.
\item[(b)] the numbers of $d$-cells attached equals the number of critical points of $\ell|_X$.
\item[(c)] the number of critical points of $\ell|_X$ is equal to $(-1)^d \cdot \chi(X \setminus H_c)$.
\end{itemize}
\et

Theorem \ref{Le} is perhaps known to experts. Since at the time of writing  \cite{MRW} we were not aware of a suitable reference, we gave a proof of it by using Morse theory. 
In more detail, we considered real Morse functions of the form $\log |f|$, where $f$ is a {nonvanishing} holomorphic Morse function on a complex manifold. 
Such a Morse function has the following key properties:  
\begin{enumerate}
\item[(i)] The critical points of $\log |f|$ coincide with the critical points of $f$.
\item[(ii)] The index of every critical point of $\log |f|$ is equal to the complex dimension of the manifold  on which $f$ is defined. 
\end{enumerate}
However, as a real-valued Morse function, $\log|f|$ is almost never proper, so classical Morse theory does not apply. Instead, one needs to employ the non-proper Morse theory techniques developed by Palais-Smale \cite{PS}.


\subsubsection{Multiview conjecture} \index{multiview conjecture}

Formula \eqref{edafs} can be used to confirm the multiview conjecture \index{multiview conjecture} of \cite{DHOST} (Conjecture \ref{cedc}). Indeed, one has the following:
\bt[\cite{MRW}]\label{214} \ 
The ED degree of the affine multiview variety $X_m \subset \bC^{2m}$ corresponding to $m\geq 3$ cameras in general position satisfies:
$$\EDdeg(X_m) =-\chi(X_m \setminus Q_{\underline{u}})=\frac{9}{2}m^3 - \frac{21}{2}m^2+8m -4.$$
\et

The computation of $\chi(X_m \setminus Q_{\underline{u}})$ relies on topological and algebraic techniques from singularity theory and algebraic geometry, see \cite[Section 4]{MRW} for complete details. We indicate here only the key technical points. 
Even though both $X_m$ and $Q_{\underline{u}}$ are smooth in $ \bC^{2m}$ and they intersect transversally, their intersection ``at infinity'' is very singular.  
We regard the affine multiview variety $X_m$ as a Zariski open subset in its closure $Y_m$ in $(\bC\bP^2)^m$, with  divisor at infinity $Y_m \setminus X_m=D_{\infty}$. \footnote{A different compactification of $X_m$, in $\bC\bP^{2m}$, was considered in \cite{HL}, where the ED degree of the affine multiview variety $X_m$ was studied via characteristic classes. This leads to an upper bound for the Euclidean distance degree of $X_m$ given by: $\EDdeg(X_m) \leq  6m^3 - 15m^2 + 11m - 4.$} It can be easily seen that $Y_m$ is isomorphic to the blowup of $\bC\bP^3$ at $m$ points.
By using the additivity of the Euler-Poincar\'e characteristic, for the computation of $\chi(X_m \setminus Q_{\underline{u}})$ it suffices to calculate $\chi(Y_m)$, $\chi(D_{\infty})$, ${\chi(D_{\underline{u}})}$, $\chi(D_{\infty} \cap D_{\underline{u}})$, where $D_{\underline{u}}:=Y_m \cap {\overline Q_{\underline{u}}}$. The main difficulty arises in the calculation of ${\chi(D_{\underline{u}})}$, since $D_{\underline{u}}$ is an irreducible (hyper)surface in $Y_m$ with a $1$-dimensional singular locus. For the computation of Euler-Poincar\'e characteristics of complex projective hypersurfaces, we refer the reader to \cite{PP} or \cite[Section 10.4]{Max}. Theorem \ref{214} is then a direct consequence of the following formulae obtained in \cite[Theorem 4.1]{MRW}:
\begin{itemize}
\item[(a)] $\chi(Y_m)=2m+4$.
\item[(b)] $\chi(D_{\infty})=\frac{m^3}{6}-\frac{3m^2}{2}+\frac{16m}{3}$. 
\item[(c)] ${\chi(D_{\underline{u}})}=4m^3-9m^2+9m$. 
\item[(d)] $\chi(D_{\infty} \cap D_{\underline{u}})=-\frac{m^3}{3}+\frac{13m}{3}$.
\end{itemize}

\br
One can similarly define \index{line multiview variety} {\it line multiview varieties} \cite{BRST} or \index{anchored multiview variety} {\it anchored multiview varieties} \cite{RST}, and aim to compute their ED degrees. For anchored point and line multiview varieties, the corresponding ED degrees were recently computed in \cite{RST} following closely the arguments described above from \cite{MRW}.
\er


\subsection{Projective Euclidean distance degree}\label{proj}

When an algebraic model is realized as an  {\it affine cone} (i.e., it is defined by homogeneous polynomials), it is natural to consider it as a  {\it projective variety}. Such models are ubiquitous in data science, engineering and other applied fields, e.g. in (structured) low rank matrix approximation \cite{OSS}, low rank tensor approximation, formation shape control \cite{AHe}, and all across algebraic statistics \cite{DSS, Su}.  

\bex
The variety $X_r$ of $s \times t$ matrices of rank $\leq r$ is an affine cone.
\eex

\bd If $Y\subset \bC\bP^{n}$ is an irreducible complex projective variety, the \index{projective Euclidean distance degree} {\it projective Euclidean distance degree} of $Y$ is defined by 
$${\pED(Y)}:=\EDdeg(C(Y)),$$ where $C(Y)$ is the affine cone of $Y$ in $\bC^{n+1}$.\ed

The affine cone $C(Y)$ on a projective variety $Y$ has a very complicated singularity at the cone point, so the computation of $\pED(Y)$ via formula (\ref{edaf}) is in general very difficult. Instead, one aims to describe $\EDdeg (C(Y))$ in terms of the topology of the projective variety $Y$ itself. 
This problem has been addressed by Aluffi and Harris in \cite{AH} (building on preliminary results from \cite{DHOST}) in the special case when $Y$ is a smooth projective variety. The main result of Aluffi-Harris can be formulated as follows:
\bt\label{ah} {\rm \cite[Theorem 8.1]{AH}} \ 
Let $Y \subset \bC\bP^n$ be a {smooth} complex projective variety, and assume that $Y \nsubseteq Q$, 
{where $Q=\{\un{x}\in \ \bC\bP^n : x_0^2+\cdots + x_n^2=0\}$ }
is the \index{isotropic quadric} isotropic quadric in $\bC \bP^n$. Then 
\be\label{peds} \pED(Y)=(-1)^{\dim Y} \cdot \chi(Y \setminus (Q \cup H))\ee
where $H\subset \bC\bP^n$ is a general hyperplane.
\et

Theorem \ref{ah} was proved in \cite{AH} by using Chern classes for singular varieties, and it provides a generalization  of  \cite[Theorem  5.8]{DHOST}, where  it was assumed that the smooth projective variety $Y$ intersects the isotropic quadric $Q$ transversally, i.e., that $Y \cap Q$ is a smooth hypersurface in $Y$.  Aluffi and Harris also  conjectured that formula \eqref{peds}  should  admit  a  natural  generalization  to  arbitrary  (possibly  singular)  projective varieties by using the Euler-Mather characteristic defined in terms of the local Euler obstruction function.  We addressed their conjecture in \cite{MRW2}, where we proved the following result:
\bt\label{pedth} {\rm \cite[Theorem 1.3]{MRW2}} \ 
If $Y \subset \bC\bP^n$ is an irreducible complex projective variety, then 
\be\label{ped} \pED(Y)=(-1)^{\dim Y} \cdot \chi( {\rm Eu}_{Y  \setminus (Q \cup H)}),\ee
where $Q$ is the isotropic quadric and $H$ is a general hyperplane in $\bC\bP^n$. 
\et

The proof of Theorem \ref{pedth} is Morse-theoretic, and it employs ideas similar to those used to prove Theorem \ref{EDa}.

Note that in the case when $Y \subset \bC\bP^n$ is smooth, Theorem \ref{pedth} reduces to the statement of Theorem \ref{ah}. Theorem \ref{pedth} also generalizes \cite[Proposition 3.1]{AH}, where the ED degree of a possibly singular projective variety $Y \subset \bC\bP^n$ is computed under the assumption that $Y$ intersects the isotropic quadric $Q$ transversally. In this case, one actually computes what is called the \index{generic ED degree} {\it generic} ED degree of $Y$. For more results concerning generic ED degrees, see also \cite{AH, DHOST, HM, OSS}, and Section~\ref{defect}~below.

Our topological interpretation of ED degrees reduces their calculation to the problem of computing MacPherson's local Euler obstruction function and the Euler-Poincar\'e characteristics of certain smooth algebraic varieties (strata). We present such computations in the following examples.

\bex[Nodal curve] \index{nodal curve}
Let $Y=\{x_0^2x_2-x_1^2(x_1+x_2)=0\} \subset \bC\bP^2$. It has only one singular point $p=[0:0:1]$. Therefore, the local Euler obstruction function $\Eu_Y$ equals $1$ on the smooth locus $Y_{\rm reg}$ of $Y$, and $\Eu_Y(p)=2$. Note that $Y$ intersects the isotropic quadric $Q$ transversally at $6$ points, and it intersects a generic hyperplane $H$ at $3$ points. Moreover, $Y_{\rm reg}$ is isomorphic to $\bC^*$. By inclusion-exclusion, we then get that $\chi(Y_{\rm reg}\setminus (Q \cup H))=-9$. It then follows from \eqref{ped} that $\pED(Y)=(-1)\cdot [(-9)+2]=7$.
\eex

\bex[Whitney umbrella] \index{Whitney umbrella} \label{exw}
Consider the (projective) Whitney umbrella, i.e., the projective surface $Y=\{x_0^2x_1-x_2x_3^2=0\} \subset \bC\bP^3$. The singular locus of $Y$ is defined by $x_0=x_3=0$. The variety $Y$ has a Whitney stratification with strata: $S_3:=\{[0:1:0:0], [0:0:1:0]\}$, 
$S_2=\{x_0=x_3=0\} \setminus S_3$, and 
$S_1=Y\setminus \{x_0=x_3=0\}.$ 
It is well known that $\Eu_Y$ takes the values $1$, $2$ and $1$ along $S_1$, $S_2$ and $S_3$, respectively. Therefore, if we let $U:=\bC\bP^3 \setminus (Q \cup H)$ for a generic hyperplane $H \subset \bC\bP^3$ and $Q$ the isotropic quadric, then 
$$\chi(\Eu_Y\vert_U)=\chi(Y\cap U)+\chi(S_2\cap U).$$
The terms on the right-hand side of the above equality can be computed directly by using the inclusion-exclusion property of the Euler characteristic. One gets: $\chi(Y \cap U)=13$ and $\chi(S_2 \cap U)=-3$ (see \cite[Example 4.4]{MRW2} for complete details). Altogether, this yields that $\pED(Y)= \chi(\Eu_Y\vert_U)=10.$
\eex

\bex[Toric quartic surface] \index{Quartic surface} \label{extor}
Let $Y\subset \bC\bP^3$ be the surface defined by $$x_0^3x_1-x_2x_3^3=0.$$
	As in the previous example, $Y$ has a Whitney stratification with three strata: $S_3:=\{[0:1:0:0], [0:0:1:0]\}$, $S_2:=\{x_0=x_3=0\}\setminus S_3$ and $S_1=Y\setminus \{x_0=x_3=0\}$. The local Euler obstruction function takes values $1$, $3$ and $1$ along $S_1$, $S_2$ and $S_3$, respectively. In fact, the only nontrivial computation is for the local Euler obstruction function along $S_3$, and this can be done topologically as in \cite{RW18}. 
	Therefore, with $U:=\bC\bP^3 \setminus (Q \cup H)$ for a generic hyperplane $H \subset \bC\bP^3$ and $Q$ the isotropic quadric, we get
	$$
	\chi(\Eu_Y\vert_U)=\chi(Y\cap U)+2\chi(S_2\cap U).
	$$
	As in the previous example, terms on the right-hand side of the above equality can be computed directly by using the inclusion-exclusion property of the Euler characteristic, and one gets $\chi(Y\cap U)=16$ and $\chi(S_2\cap U)=-3$. 
	Hence 
	$$\pED(Y)=\chi(\Eu_Y\vert_U)=10.$$
\eex

\begin{remark}
In view of recent computations of the local Euler obstruction function for determinantal varieties \cite{GGR}, it is an interesting exercise to check that \eqref{edaf} or \eqref{ped} recovers the Euclidean distance degree of the variety of $s\times t$ matrices of rank $\leq r$,  as discussed in Example \ref{mat}.
\end{remark}


\subsection{Defect of ED degree}\label{defect}

We begin this section by noting that the projective ED degree 
$\pED(Y)$ is difficult to compute even if $Y \subset \bC\bP^n$ is smooth, since $Y$ and $Q$ may intersect non-transversally in $\bC\bP^n$. 
The idea is then to perturb the objective (i.e., squared distance) function to create a transversal intersection.
For this purpose, it is natural to introduce the following notion:
\bd The {\it $\un \lambda$-Euclidean distance (ED) degree} {$\EDdeg_{\un \lambda}(X)$} of a closed irreducible variety $X \subset \bC^n$ is the number of complex critical points of $$d^{\un \lambda}_{\underline{u}}(\un x)=\sum_{i=1}^{n} \lambda_i(x_i-u_i)^2 \ , \ \ {\un \lambda}=(\lambda_1,\ldots,\lambda_n)$$ on the smooth locus $X_{\rm reg}$ of $X$ (for general $\underline{u}\in \bC^n$).

Similarly, If $Y\subset \bC\bP^{n}$ is an irreducible complex projective variety, one defines the {\it projective $\un \lambda$-Euclidean distance degree} of $Y$ by 
$$\pED_{\un \lambda}(Y):=\EDdeg_{\un \lambda} (C(Y)),$$ where $C(Y)$ is the affine cone of $Y$ in $\bC^{n+1}$.

If $\lambda={\un 1}$, the vector with all entries $1$, we get the \index{unit ED degree} {\it (unit) ED degree}, $\EDdeg=\EDdeg_{\un{1}}$, resp., $\pED=\pED_{\un{1}}$.
If $\un \lambda$ is generic, we get the corresponding  \index{generic ED degrees} {\it generic ED degrees}.
\ed

Theorem \ref{pedth} can be easily adapted to the weighted context to obtain the following result:
\bt\label{wpe}
Let $Y \subset \bC\bP^n$ be an irreducible complex projective variety. Then 
\be\label{wped} {\pED_{\un \lambda}(Y)=(-1)^{\dim Y} \cdot \chi( {\rm Eu}_{Y  \setminus (Q_{\un \lambda} \cup H)})},\ee
where $Q_{\un \lambda}:=\{\un{x} \in \bC\bP^n \mid \lambda_0x_0^2+\cdots + \lambda_n x_n^2=0\}$ and $H$ is a general hyperplane in $\bC\bP^n$. 
In particular, if $Y$ is smooth, then 
\be \pED_{\un \lambda}(Y)=(-1)^{\dim Y} \cdot \chi(Y  \setminus (Q_{\un \lambda} \cup H)).\ee
\et

For {generic} $\un \lambda$, the quadric $Q_{\un \lambda}$ intersects $Y$ transversally in $\bC\bP^n$, and the computation of the generic projective ED degree $\pED_{\un \lambda}(Y)$ is more manageable, e.g., see \cite{DHOST, HM, AH}, etc.
This motivates the following:
\bd[Defect of ED degree]
If $Y \subset \bC\bP^n$ is an irreducible projective variety and $\un \lambda$ is generic, the \index{defect} {\it defect of Euclidean distance degree} of $Y$ is defined as:
$${\EDdef(Y)}:=\pED_{\un \lambda}(Y) - \pED(Y).$$
\ed

\bex\label{ex:quartic-toric}
The projective Whitney umbrella considered in Example \ref{exw} is transversal to the isotropic quadric $Q$, so its projective Euclidean distance degree coincides in this case with the generic Euclidean distance degree. In particular, the defect of Euclidean distance degree of the projective Whitney umbrella is trivial.

On the other hand, we have seen that the projective ED degree of the quartic surface of Example \ref{extor} equals $10$. Moreover, the generic ED degree can be computed in this case as in \cite{HS} and \cite{AH}, and it is equal to $14$. Therefore, the defect of Euclidean distance degree of the quartic surface equals $4$.
\eex

More generally, it is known that $\EDdef(Y)$ is non-negative, but for many varieties appearing in optimization, engineering, statistics, and data science, this defect is quite substantial.  In \cite{MRW3}, we gave a new topological interpretation of this defect in terms of invariants of singularities of $Y \cap Q$ (i.e., the non-transversal intersection locus) when $Y$ is a smooth irreducible complex projective variety in $\bC\bP^n$. Specifically, we proved the following result:

\bt\label{defth}{\rm \cite[Theorem 1.5]{MRW3}} \ 
Let $Y \subset \bC\bP^n$ be a smooth irreducible variety, with $Y \nsubseteq Q$, and let $Z={\rm Sing}(Y\cap Q)$. Let $\mathcal{V}$ be the collection of strata of a Whitney stratification of $Y \cap Q$ which are contained in $Z$, and choose $\un \lambda$ generic.  Then:
\be\label{deff}
{\EDdef(Y)=\sum_{V \in \mathcal{V}}  \alpha_V \cdot \pED_{\un \lambda}(\bar V)},
\ee
where, for any stratum $V\in \mathcal{V}$, $$\alpha_V=(-1)^{\codim_{Y \cap Q} V} \cdot \left( \mu_V-\sum_{\{S \mid V < S\}} \chi_c(\lk_{\overline S}(V)) \cdot \mu_S \right),$$  
with $\mu_V=	\chi(\widetilde{H}^*(F_{V};\bQ))$
the Euler characteristic of the reduced cohomology of the Milnor fiber $F_{V}$ of the hypersurface $Y \cap Q \subset Y$ at some point in $V$, and $\lk_{\overline S}(V)$ the complex link of a pair of distinct strata $(V,S)$ with $V \subset {\bar S}$.
\et

The proof of Theorem \ref{defth} relies on the theory of vanishing cycles, adapted to the pencil of quadrics $Q_{\un\lambda}$ on $Y$, see \cite[Section 2]{MRW3} for complete details. 

Note that computing the ED degree defect of $Y \subset \bC\bP^n$ yields a formula for the projective ED degree $\pED(Y)$ only in terms of generic ED degrees (which, as already mentioned, are easier to compute). Also, computing the ED degree defect directly is generally much easier than the individual computations of $\pED(Y)$ and $\pED_{\un\lambda}(Y)$ for generic $\un \lambda$. 

As an immediate consequence of Theorem \ref{defth}, one has the following result from \cite[Corollary 6.3]{AH}:

\begin{corollary}\label{cah} Under the notations of Theorem \ref{defth}, assume that  $Z={\rm Sing}(Y\cap Q)$ has only isolated singularities. Then
\be \EDdef(Y)=\sum_{x \in Z} \mu_x,\ee
where $\mu_x$ is the \index{Milnor number} Milnor number of the isolated hypersurface singularity germ $(Y\cap Q,x)$ in $Y$.
\end{corollary}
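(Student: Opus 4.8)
The plan is to specialize the general defect formula of Theorem~\ref{defth} to the case where the non-transversal locus $Z=\Sing(Y\cap Q)$ is finite. First I would note that when $Z=\{x_1,\dots,x_r\}$ consists of isolated points, a Whitney stratification of the hypersurface $Y\cap Q\subset Y$ may be chosen with only the top stratum $S_0:=(Y\cap Q)_{\rm reg}$ together with the zero-dimensional strata $\{x_1\},\dots,\{x_r\}$. Consequently the index set $\mathcal{V}$ appearing in \eqref{deff} is precisely $\{\{x_1\},\dots,\{x_r\}\}$, each stratum is already closed (so $\bar V=\{x_i\}$), and for each $V=\{x_i\}$ the only stratum $S$ with $V<S$ is $S_0$.

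Next I would compute the coefficient $\alpha_V$ for $V=\{x_i\}$. Writing $d:=\dim Y$, one has $\dim(Y\cap Q)=d-1$, hence $\codim_{Y\cap Q}V=d-1$. Since $x_i$ is an isolated singular point of the hypersurface $Y\cap Q$ in the smooth variety $Y$, the Milnor fiber $F_{x_i}$ has the homotopy type of a bouquet $\bigvee_{\mu_{x_i}}S^{d-1}$ of $(d-1)$-spheres, where $\mu_{x_i}$ is the classical (nonnegative) Milnor number of the germ $(Y\cap Q,x_i)$ in $Y$; thus, in the notation of Theorem~\ref{defth}, $\mu_V=\chi(\widetilde H^*(F_{x_i};\bQ))=(-1)^{d-1}\mu_{x_i}$. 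For the correction term, the Milnor fiber of $Y\cap Q\subset Y$ at a smooth point of $Y\cap Q$ is contractible, so $\mu_{S_0}=0$, and therefore $\sum_{\{S\mid V<S\}}\chi_c(\lk_{\overline S}(V))\cdot\mu_S=0$. Combining these, $\alpha_V=(-1)^{d-1}\cdot\mu_V=(-1)^{d-1}(-1)^{d-1}\mu_{x_i}=\mu_{x_i}$.

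Then I would evaluate the factor $\pED_{\un\lambda}(\bar V)$. Since $\bar V=\{x_i\}$ is a single point of $\bC\bP^n$, its affine cone $C(\bar V)$ is a line through the origin in $\bC^{n+1}$, i.e.\ a linear space; as every linear space has ED degree $1$ (and this persists for the weighted objective $d^{\un\lambda}_{\un u}$ with generic $\un\lambda$, whose critical-point equations on a linear space are again linear with a unique solution), we get $\pED_{\un\lambda}(\bar V)=1$. Substituting into \eqref{deff} gives $\EDdef(Y)=\sum_{i=1}^r\alpha_V\cdot\pED_{\un\lambda}(\bar V)=\sum_{i=1}^r\mu_{x_i}=\sum_{x\in Z}\mu_x$, as asserted.

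I do not expect a genuine obstacle here, since the corollary is a direct specialization of Theorem~\ref{defth}; the only points needing care are (i) the sign bookkeeping, in particular the distinction between the signed reduced Euler characteristic $\mu_V$ used in \eqref{deff} and the unsigned Milnor number $\mu_x$ appearing in the statement, and (ii) verifying that in the isolated-singularity case there are no intermediate strata, so that the sole correction term comes from the smooth stratum $S_0$ and vanishes because its Milnor fiber is contractible. If one preferred not to invoke Theorem~\ref{defth} at all, a direct vanishing-cycles computation along the pencil of quadrics $Q_{\un\lambda}$ on $Y$ would recover the same answer, but passing through the general formula is the shortest route.
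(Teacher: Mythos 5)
Your proof is correct and is precisely what the paper has in mind when it calls this "an immediate consequence of Theorem \ref{defth}": you specialize the stratification to $\{(Y\cap Q)_{\rm reg},\{x_1\},\dots,\{x_r\}\}$, observe the correction term vanishes (whether one reads the sum as running over strata in $\mathcal V$, where it is empty, or over all strata, where $\mu_{S_0}=0$), and carefully track the two cancelling signs $(-1)^{d-1}$ from $\codim_{Y\cap Q}V$ and from $\chi(\widetilde H^*(\bigvee_{\mu_{x_i}}S^{d-1}))$, together with $\pED_{\un\lambda}(\text{point})=1$. The sign bookkeeping and the identification of $\mu_V$ with $(-1)^{d-1}\mu_{x_i}$ is exactly the crux, and you handled it correctly.
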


Furthermore, if $Y \cap Q$ is equisingular along the non-transversal intersection locus $Z$, then Theorem \ref{defth} yields the following:

\begin{corollary} Under the notations of Theorem \ref{defth}, assume that  $Z={\rm Sing}(Y\cap Q)$ is connected and $Y \cap Q$ is equisingular along $Z$. Then:
\be
\EDdef(Y)=\mu \cdot \pED_{\un\lambda}(Z),
\ee
where $\mu$ is the Milnor number of the isolated transversal singularity at some point  $x \in Z$ (i.e., the Milnor number of the isolated hypersurface singularity in a normal slice to $Z$ at $x$).
\end{corollary}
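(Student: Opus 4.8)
The plan is to derive this corollary directly from Theorem \ref{defth} by specializing the stratification of $Y\cap Q$. Under the hypothesis that $Y\cap Q$ is equisingular along $Z={\rm Sing}(Y\cap Q)$ and that $Z$ is connected, the natural Whitney stratification of $Y\cap Q$ has exactly two relevant pieces: the open dense smooth stratum $S=(Y\cap Q)_{\rm reg}$, and a single stratum whose closure is $Z$ itself (equisingularity along $Z$ precisely guarantees that $Z$ is a single stratum rather than having to be broken up further). Thus in the notation of Theorem \ref{defth}, the collection $\mathcal{V}$ of strata contained in $Z$ consists of the single stratum $V$ with $\bar V = Z$.

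First I would identify the invariants appearing in the formula \eqref{deff} for this single stratum. The codimension $\codim_{Y\cap Q} V$ equals $\codim_{Y\cap Q} Z$, which is the dimension of a normal slice to $Z$ in $Y\cap Q$. By equisingularity, $F_V$ is the Milnor fiber of the isolated transversal hypersurface singularity in a normal slice to $Z$ at any point $x\in Z$, so $\mu_V = \chi(\widetilde H^*(F_V;\bQ))$ is, up to sign, exactly the Milnor number $\mu$ of that transversal singularity; more precisely, since the transversal slice has complex dimension equal to $\codim_{Y\cap Q} Z$, we have $\mu_V = (-1)^{\codim_{Y\cap Q} Z}\mu$ by the bouquet description of the Milnor fiber of an isolated singularity. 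Next I would observe that the only stratum $S$ with $V < S$ is the big smooth stratum $S=(Y\cap Q)_{\rm reg}$, for which $\mu_S = \chi(\widetilde H^*(F_S;\bQ)) = 0$ since the Milnor fiber at a smooth point of $Y\cap Q$ is contractible. Therefore the correction sum $\sum_{\{S\mid V<S\}}\chi_c(\lk_{\overline S}(V))\cdot\mu_S$ vanishes identically, and the coefficient collapses to
\[
\alpha_V = (-1)^{\codim_{Y\cap Q} Z}\cdot\mu_V = (-1)^{\codim_{Y\cap Q} Z}\cdot(-1)^{\codim_{Y\cap Q} Z}\mu = \mu.
\]

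Finally, plugging $\mathcal{V}=\{V\}$, $\alpha_V=\mu$, and $\bar V = Z$ into \eqref{deff} gives $\EDdef(Y) = \mu\cdot\pED_{\un\lambda}(Z)$, which is the claimed identity. I do not expect any serious obstacle here: the content is entirely in Theorem \ref{defth}, and the corollary is a matter of unwinding the definitions in the equisingular case. The one point requiring a little care is the sign bookkeeping in the coefficient $\alpha_V$ — one must match the $(-1)^{\codim}$ in front of the formula with the sign hidden in $\mu_V=\chi(\widetilde H^*(F_V;\bQ))$ coming from the bouquet-of-spheres structure of the transversal Milnor fiber — but these two signs cancel, leaving the clean coefficient $\mu$. (Compare the analogous but simpler bookkeeping in Corollary \ref{cah}, where $Z$ is a finite set of points, each stratum is a point with $\codim_{Y\cap Q}$ equal to $\dim(Y\cap Q)$, and the same cancellation produces $\EDdef(Y)=\sum_x\mu_x$.)
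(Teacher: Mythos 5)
Your argument is correct and is exactly the intended unwinding of Theorem \ref{defth}: with $Z$ smooth and $Y\cap Q$ equisingular along it, the only stratum in $\mathcal{V}$ is $V=Z$, the correction sum vanishes because $\mu_S=0$ on the smooth open stratum, and the two signs — the $(-1)^{\codim_{Y\cap Q}V}$ prefactor and the $(-1)^{\codim_{Y\cap Q}Z}$ hidden in $\mu_V=\chi(\widetilde H^*(F_V;\bQ))$ from the bouquet of $\codim_{Y\cap Q}Z$-dimensional spheres — cancel to give $\alpha_V=\mu$. The one step you pass over quickly, that $F_V$ at a point of the positive-dimensional stratum $Z$ is homotopy equivalent to the transversal Milnor fiber in a normal slice, is precisely what equisingularity (local product structure along $Z$ via Thom's isotopy lemma) delivers, so the proof is complete.
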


Since intersecting $Y$ with a general linear space $L$ does not change the multiplicities $\alpha_V$ on the right-hand side of formula \eqref{deff}, Theorem \ref{defth} also has the following immediate consequence:

\begin{corollary}
With the notations of Theorem \ref{defth}, and for $L$ a general linear subspace of $\bC\bP^n$, we have:
\be  \EDdef(Y \cap L)=\sum_{V \in \mathcal{V}}  \alpha_V \cdot \pED_{\un\lambda}(\bar V \cap L).
\ee
\end{corollary}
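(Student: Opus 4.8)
The plan is to apply Theorem~\ref{defth} directly to the smooth variety $Y\cap L$, and then to verify that every ingredient of formula~\eqref{deff} behaves predictably under a general linear section. Fix a general linear subspace $L\subset\bC\bP^n$ and a generic weight vector $\un\lambda$. By Bertini's theorem, $Y\cap L$ is again smooth and (when $\dim L>\codim Y$) irreducible, and $Y\cap L\nsubseteq Q$ since $Y\nsubseteq Q$; moreover $(Y\cap L)\cap Q=(Y\cap Q)\cap L$. Because $L$ is general it is transversal to every stratum of a fixed Whitney stratification $\mathscr S$ of $Y\cap Q$, so $\{\,V\cap L\mid V\in\mathscr S\,\}$ is a Whitney stratification of $(Y\cap Q)\cap L$ and $\Sing\big((Y\cap L)\cap Q\big)=Z\cap L$, where $Z=\Sing(Y\cap Q)$. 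In particular the strata of this induced stratification lying in $Z\cap L$ are precisely the sets $V\cap L$ with $V\in\mathcal V$. Hence Theorem~\ref{defth}, applied to $Y\cap L$, yields
\be
\EDdef(Y\cap L)=\sum_{V\in\mathcal V}\alpha_{V\cap L}\cdot\pED_{\un\lambda}\big(\overline{V\cap L}\big),
\ee
where $\alpha_{V\cap L}$ is the multiplicity attached to the stratum $V\cap L$ of $(Y\cap Q)\cap L$ (strata $V$ with $\dim V<\codim L$ satisfy $V\cap L=\emptyset$ and contribute zero on both sides, so they may be ignored).

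It remains to match each summand with its counterpart for $Y$. For the cycle, taking Zariski closures commutes with a general linear section, so $\overline{V\cap L}=\bar V\cap L$ and thus $\pED_{\un\lambda}\big(\overline{V\cap L}\big)=\pED_{\un\lambda}\big(\bar V\cap L\big)$. For the multiplicity, unwinding the definition of $\alpha_V$ from Theorem~\ref{defth}, the equality $\alpha_{V\cap L}=\alpha_V$ reduces to three transversality statements: (i) $\codim_{(Y\cap Q)\cap L}(V\cap L)=\codim_{Y\cap Q}V$, which is immediate from transversality of $L$; (ii) the Milnor fiber of the hypersurface $(Y\cap L)\cap Q\subset Y\cap L$ at a point of $V\cap L$ is homotopy equivalent to the Milnor fiber of $Y\cap Q\subset Y$ at the corresponding point of $V$, so $\mu_{V\cap L}=\chi(\widetilde H^*(F_{V\cap L};\bQ))$ equals $\mu_V$; and (iii) for strata $V<S$ of $Y\cap Q$ the complex link $\lk_{\overline{S\cap L}}(V\cap L)$ is homeomorphic to $\lk_{\bar S}(V)$, and $S\mapsto S\cap L$ gives an order-preserving bijection between the strata above $V$ and those above $V\cap L$. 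Granting (i)--(iii), each $\alpha_{V\cap L}$ equals $\alpha_V$, and substituting into the display above produces exactly the asserted identity.

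The step I expect to be the main obstacle is the invariance of the local transverse data in (ii) and (iii) under a general linear section. The point is that a general $L$ is transversal not merely to the strata of $Y\cap Q$ but also to the characteristic/conormal data governing these local invariants, so that a normal slice to $V\cap L$ inside $(Y\cap L)\cap Q$ may be realized as a normal slice to $V$ inside $Y\cap Q$; equivalently, one invokes the standard fact from stratified Morse theory that the normal Morse data (hence the complex link) of a stratum is unchanged by transversal intersections, together with the constancy of Milnor fibers along strata recalled in Section~\ref{ss:HypersurfaceSingularities}. Concretely, I would choose local analytic coordinates adapted to $V$ in which $L$ becomes a coordinate subspace transverse to $V$, and then observe that slicing by $L$ and slicing by a normal space to $V$ commute, up to the choices already absorbed into the very definitions of the Milnor fiber and of the complex link. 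With (i)--(iii) established, the corollary follows with no further computation, confirming the remark that a general linear section leaves the multiplicities $\alpha_V$ unchanged.
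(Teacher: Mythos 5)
Your proposal is correct and takes essentially the same route as the paper: apply Theorem~\ref{defth} to $Y\cap L$ and observe that a general linear section leaves the multiplicities $\alpha_V$ unchanged. The paper states this in a single sentence before the corollary; you simply unwind why transversality of $L$ preserves the codimension, the Milnor fiber data, and the complex links that enter the definition of $\alpha_V$, which is exactly the justification the paper is implicitly relying on.
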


We conclude this section with the following example:	
\bex[$2 \times 2$ matrices of rank $1$]
Let $Y=\{x_0x_3-x_1x_2=0\} \subset \bC\bP^3$, with isotropic quadric $Q=\{\sum_{i=0}^3 x_i^2=0\}$. 
Then $Y \cap Q$ consists of $4$ lines, with $4$ isolated double point singularities (hence, each having Milnor number $1$). Corollary \ref{cah} yields that $\EDdef(Y)=4$. In fact, as shown in \cite{DHOST}, one has in this case that $\pED(Y)=2$ and $\pED_{\un\lambda}(Y)=6$ for generic $\un\lambda$. (However, the computation of both ED degrees separately is much more complicated than computing the ED defect.) 
For a higher-dimensional generalization of this example, see \cite[Example 3.3]{MRW3}.
\eex


\subsection{Other developments}

The study of Euclidean distance degrees has branched into several different areas. 
For instance, \cite{KOL} studies a different notion of distance given by the \index{$p$-norm} $p$-norm. 
This leads to counting the critical points of a structured polynomial function on a variety. Other work has gone into studying the number of real critical points of the ED function as the data varies continuously. This is done by computing the \index{ED discriminant} {\it ED discriminant}~\cite[Section 7]{DHOST} and more generally data loci~\cite{Horobet-data-loci,HR-data-loci}. 
Another point of interest is the average number of real critical points~\cite[Section 4]{DHOST} of the distance function or other objective functions. 

Focussing on a particular class of models is an other important line of work for Euclidean distance degrees. 
For example, the generic Euclidean distance degree for toric models has been studied be Helmer and Sturmfels in \cite{HM}, where they provide a combinatorial formula for the ED degree in terms of polyhedral geometry. 
However, it is still an open problem to give an analogous formula for the defect and (unit) ED degree for these models that can be applied to Example~\ref{ex:quartic-toric}.

Lastly for this section, we remark on an exciting new application.  A \index{bottleneck} {\it bottleneck} of a metric space $X$ (an algebraic model for instance) is a local minimum of the squared distance function $\mathrm{dist}^2: X\times X\to \mathbb{R}$,
$\mathrm{dist}(x,y)=  \| x-y\|^2, x\neq y $ on $X\times X$.
The smallest value of distance function on the bottlenecks 
is a fundamental invariant  in the algebraic geometry of data.
When $X$ is a real affine algebraic variety, there is a nice  necessary condition for bottlenecks \cite{DEW-bottleneck}: 
a pair $(x,y)\in X\times X$ of distinct smooth points is a bottleneck of $X$ provided that the Euclidean normal spaces at $x$ and $y$ contain the line spanned by $x$ and $y$. 
In other words, the pair of points $(x, y)$ is a critical point of the squared distance function $\mathrm{dist}^2: X\times X\to \mathbb{R}$. 
Therefore, finding lines orthogonal at two or more points is an optimization problem with algebraic constraints and 
a recent line of investigation 
was initiated in \cite{DEW-bottleneck} where a formula for the \index{bottleneck degree} bottleneck degree of a smooth variety in generic position is given in terms of of polar and Chern classes, but a topological interpretation of the bottleneck degree remains to be found.


\section{Maximum likelihood estimation}\label{MLE}
A natural problem is to describe given data in terms of a model. Maximum likelihood estimation (MLE) \index{maximum likelihood estimation} is one such approach, and it is a fundamental computational problem in statistics. For MLE, one has a likelihood function which assigns to each point in the model the likelihood of observing the given data. So by maximizing the likelihood function we gain an understanding of the data.

Consider the following example of a biased coin.~\bex
Let $\theta$ be the probability of observing tail (T) on a \index{biased coin} biased coin, and perform the following experiment: flip a biased coin twice and record the outcomes. Let
$p_i(\theta)$ be probability of observing $i$ heads (H), for $i=0,1,2$. Hence
\begin{center}
$p_0(\theta)=\theta^2$, $p_1(\theta)=2\theta(1-\theta)$, $p_2(\theta)=(1-\theta)^2$.
\end{center}
Repeat the experiment a number of times, and let 
$u_i$ record the number of times $i$ heads were observed, $i=0,1,2$.
The MLE problem is to estimate $\theta$ by maximizing the likelihood function \index{ likelihood function}
$$\ell_{\un u}(\theta)=p_0(\theta)^{u_0} p_1(\theta)^{u_1}p_2(\theta)^{u_2}.$$
For this purpose, one first solves  $d\log \ell_{\un u}=0$ for $\theta$, with unique solution $$\hat{\theta}=\frac{2u_0+u_1}{2u_0+2u_1+2u_2}.$$
Note that the distribution $p$ of this example lives in the statistical model $X=V(g)$ defined by $$g(p_0,p_1,p_2)=4p_0p_2 - p_1^2.$$ This is the \index{Hardy-Weinberg curve} {\it Hardy-Weinberg curve}, which plays an important role in population genetics (e.g., see \cite{HC}).
\eex

More generally, suppose $X \subset \Delta_n$ is a family of probability distributions, where $\Delta_n$ is the $n$-dimensional \index{probability simplex} {\it probability simplex}, i.e., 
\begin{center}$\Delta_n=\{\un p=(p_0,\ldots, p_n) \in \bR^{n+1} \mid p_i>0, \sum_i p_i=1\}.$\end{center}
Given $N$ independent and identically distributed samples, we summarize the outcome in the data vector $\un u=(u_0,\ldots, u_n)$, with $N=\sum_i u_i$ and $u_i:=$ the number of times state $i$ was observed.
Let $p_i$ be the probability of observing state $i$.
The {\it MLE / ML optimization} \index{ML optimization} consists of maximizing the likelihood function $$\ell_{\un u}(\un p):=\prod_{i=0}^n p_i^{u_i},$$ subject to the constraint $p \in X$. However, note that a parametrization of $X$ like in the above example may not be available.

The algebraic degree of ML optimization is the \index{ML degree} {\it ML degree},  denoted by $\MLdeg(X)$. It was introduced by Catanese-Hosten-Khetan-Sturmfels  \cite{CHKS} in 2006, and studied since, e.g., by Huh, Sturmfels, etc., see \cite{DSS,Hu2,HS}. The goal of this section is to describe the main ideas and constructions behind our proof in \cite{MRW5} of the Huh-Sturmfels {\it involution conjecture} of \cite{HS}.


\subsection{ML degree of very affine varieties}\label{mlva}

Recall that an affine variety $Z$ is called \index{very affine} {\it very affine}, if it admits a closed embedding to an affine torus $(\C^*)^n$ for some $n$. (For a very affine variety we always assume that such a closed embedding is chosen.) A  \index{master function} {\it master function} (also called a \index{likelihood function}  {\it likelihood function} in \cite{Hu2}) 
on $(\C^*)^n$ is of the form 
\[
\ell_{\un{u}}(\un x)\coloneqq x_1^{u_1}\cdots x_n^{u_n},
\]
where $(x_1,\ldots,x_n)$ are the coordinate functions on $(\C^*)^n$ and  $\un{u}=(u_1, \ldots, u_n)\in \bZ^n$. If, more generally, $\un{u}\in \C^n$, then $\ell_{\un{u}}$ is a multivalued function, but the critical points of $\ell_{\un{u}}|_{Z_{\reg}}$ are well defined, and they are exactly the degeneration points of the restriction of the holomorphic 1-form 
\[
d\log \ell_{\un{u}}=u_1\frac{dx_1}{x_1}+\cdots +u_n\frac{dx_n}{x_n}
\]
to $Z_{\reg}$. 

\bd\label{definition:mldegree-very-affine}
The \index{maximum likelihood degree} {\it ML degree} of a very affine variety $Z \subset (\C^*)^n$, denoted by $\MLdeg(Z)$, is the number of critical points of a likelihood/master function $\ell_{\un u}$ on  $Z_{\reg}$, for general $\un u \in \bC^n$.
\ed

The following result was obtained by Huh in \cite{Hu2}.
\bt{\rm \cite[Theorem 1]{Hu2}} \ 
If $Z \subset (\C^*)^n$ is a smooth very affine variety with $d=\dim Z$, then
$$\MLdeg(Z)=(-1)^d \cdot \chi(Z).$$
\et

Furthermore, the second and third authors generalized Huh's result to the singular setting by showing that singular strata in a Whitney stratification of $Z$ contribute to the Euler characteristic a weight given by the value of the local Euler obstruction function $\Eu_Z$ on that stratum. More precisely, one has the following.
\bt[\cite{RW}]\label{mlds} \
If $Z \subset (\C^*)^n$ is a very affine variety with $d=\dim Z$,
$$\MLdeg(Z)=(-1)^d \cdot \chi(\Eu_{Z}).$$
\et

The above formulae for the ML degree were further generalized and given a geometrical interpretation in \cite{MRW5}, as we shall now explain.

The total space of all critical points of the master functions on $Z$ defines a closed subvariety of $Z_{\reg}\times \bC^{n}$: \[
\mathfrak{X}^\circ(Z) :=\{ (\un{z}, \un{u}) \in Z_{\reg}\times \C^n\mid \un{z} \text{ is a critical point of } \ell_{\un{u}}|_{Z_{\reg}}\}.
\]
Using the natural compactifications $(\C^*)^n\subset \bP^n$ and $\C^n\subset \bP^n$, we consider $Z_{\reg}\times \C^n$ as a locally closed subvariety of $\bC\bP^n\times \bC\bP^n$. Let $\mathfrak{X}(Z)$ be the closure of $\mathfrak{X}^\circ(Z)$ in $\bC\bP^n\times \bC\bP^n$, and set 
\be\label{eq:vbidegree}  
[\mathfrak{X}(Z)]:=\sum_{i=0}^{d}v_i[\bC\bP^{i}\times \bC\bP^{n-i}]\in A_*(\bC\bP^n\times \bC\bP^{n}).
\ee
{We call $v_i$ the \index{bidegree} {\it $i$th bidegree} of $\mathfrak{X}(Z)$, and 
note that $v_0=\MLdeg(Z)$. } 

We then have the following result of \cite{MRW5}, generalizing \cite[Theorem 2]{Hu2} which covered only the case of a smooth and sch\"on very affine variety. This shows that there are deeper relations between the algebraic complexity of the MLE problem and the topology of the corresponding algebraic variety beyond the Euler characteristics considered in Theorem \ref{mlds}.

\bt{\rm \cite[Theorem 1.3]{MRW5}}\label{thm_CSM}
Let $Z\subset (\bC^*)^n$ be a very affine variety of dimension $d$, with $\mathfrak{X}(Z)$  and its corresponding bidegrees $v_i$ defined as in \eqref{eq:vbidegree}. 
Then, the \index{Chern-Mather class} Chern-Mather class of $Z$ is given by
\[
c_*^{Ma}(Z)=\sum_{i=0}^d(-1)^{d-i} v_i [\bC\bP^{i}]\in A_*(\bC\bP^n),
\]
where $c_*^{Ma}(Z):=c_*(\Eu_Z)$, with $c_*:CF(\bC \bP^n) \to A_*(\bC \bP^n)$ the MacPherson Chern class transformation and $\Eu_Z$ the local Euler obstruction function of $Z$ regarded as a constructible function on $\bC\bP^n$ by extension by zero.
\et

\begin{proof}[sketch]
Theorem \ref{thm_CSM} makes use of Theorem \ref{thm_m} applied to the reduced normal crossing divisor $D=X \setminus U=\bC\bP^n \setminus (\bC^*)^n$, i.e., $D$ is the usual boundary divisor of the toric variety $\bC\bP^n$ with open subtorus $(\bC^*)^n$, so that the log cotangent bundle $E=T^*(X,D)=\bC\bP^n \times \bC^n$ is trivialized via the global log forms $\frac{dx_1}{x_1}, \ldots, \frac{dx_n}{x_n}$. Thus, we can identify the compactification $\overline{E}$ with $\bC\bP^n\times \bC\bP^n$, with the first factor being the base and the second being the fiber. 

Given a very affine variety $Z\subset U=(\bC^*)^n$, we have by definition that 
\begin{equation}\label{eq_reg}
\mathfrak{X}^\circ(Z)=T^*_{Z_{\textrm{reg}}}(\bC^*)^n.
\end{equation}
Let $\Lambda=T^*_Z(\bC^*)^n$ be the closure of $T^*_{Z_{\textrm{reg}}}(\bC^*)^n$ in $T^*(\bC^*)^n$. Then $\Lambda$ is a conic Lagrangian cycle in $T^*(\bC^*)^n$. Let $\overline{\Lambda}_{\log}$ be the closure of $\Lambda$ in $E$ and note that $\mathfrak{X}(Z)$ is the closure of $\overline{\Lambda}_{\log}$ in $\overline{E}=\bC\bP^n\times \bC\bP^n$. Hence, by \eqref{triv}, if 
\[
c_*^{E}(\overline{\Lambda}^{\log})=\sum_{i=0}^d v_i [\bC\bP^i] \in A_*(\bC\bP^n),
\]
with $d=\dim Z$, then
\[
[\mathfrak{X}(Z)]=\sum_{i=0}^d v_i [\bC\bP^i \times \bC\bP^{n-i}] \in A_*(\bC\bP^n\times \bC\bP^n).
\]
Next note that if we consider $\Eu_Z$ as a constructible function on $\bC\bP^n$, with value equal to zero {outside $Z$}, this corresponds to the pushforward of $\Lambda$ under the open inclusion 
$j:(\bC^*)^n \hookrightarrow \bC\bP^n$. Let $\phi:T^*\bC\bP^n \to T^*(\bC\bP^n,D)$ be the natural bundle map. It then follows by Theorem \ref{thm_m} that
\begin{align*}
c_*^{E}(\overline{\Lambda}_{\log})
= c_*^{T^*\bC\bP^n}(\phi^* \overline{\Lambda}_{\log}) 
= (-1)^d \cdot c_*^{T^*\bC\bP^n}(CC(\Eu_Z)) 
= (-1)^d \cdot \check{c}_*^{Ma}(Z),
\end{align*}
or, equivalently,
\[
c_*^{Ma}(Z)=\sum_{i=0}^d (-1)^{d-i}v_i [\bC\bP^i] \in A_*(\bC\bP^n).
\]
\end{proof}

\br
Note that by taking the degrees in Theorem \ref{thm_CSM}, one recovers the statement of Theorem \ref{mlds}.
\er

\br\label{prco}
In \cite[Theorem 2]{Hu2}, the total space of critical points $\mathfrak{X}^\circ(Z)$ is defined as a subvariety of $Z\times \bC\bP^{n-1}$, and hence $\mathfrak{X}(Z)$ is a subvariety of $\bC\bP^n\times \bC\bP^{n-1}$. When $Z$ is not the ambient space $(\bC^*)^n$, our definition of $\mathfrak{X}(Z)$ is a cone of the one in \cite{Hu2}, hence in this case the two constructions define the same sequence of numbers $v_i$. In \cite{MRW5}, we used the above-mentioned construction because it gives the correct formula even when $Z$ is the ambient space $(\bC^*)^n$, but also due to our use of Chern classes of conic cycles. Note that, to understand the Chern classes of conic cycles $\Lambda$ in a vector bundle $E$, one loses track of all conic cycles supported on the zero section of the vector bundle if taking the projective cones $\bP(\Lambda)\subset \bP(E)$ instead of taking the closure $\overline{\Lambda}\subset \overline{E}=\bP(E\oplus \C)$. 
\er


\subsection{Likelihood geometry in $\bC\bP^n$}
Let $p_0,\ldots,p_n$ be the coordinates in $\bC\bP^n$ (e.g., representing probabilities).
Let $\un u=(u_0,\ldots, u_n)$ be the observed \index{data vector} data vector, where $u_i$ is the number of samples in state $i$.
The \index{likelihood function} {\it likelihood function} on $\bC\bP^n$ is given by
\begin{center}
$\ell_{\un u}(\un p)=\frac{p_0^{u_0}p_1^{u_1}\cdots p_n^{u_n}}{(p_0+\cdots +p_n)^{u_0+\cdots + u_n}}.$
\end{center}
So $\ell_{\un u}$ is a rational function on $\bC\bP^n$, regular on $\bC\bP^n \setminus \mathcal{H}$, where \begin{center}${\mathcal{H}:=\{p_0 \cdots p_n (p_0+\cdots +p_n)=0\}}$.\end{center} 
Consider the restriction of $\ell_{\un u}$ to a closed irreducible subvariety $X \subset \bC\bP^n$ (e.g., defined over $\bR$), so that  
\begin{equation}\label{eq:X-circ}
X^\circ:=X \setminus \mathcal{H} \neq \emptyset.
\end{equation}
(When $X$ is a statistical model, the ML problem is to maximize $\ell_{\un u}$ over $X \cap \Delta_n$, with $\Delta_n$ the probability simplex.) 
Let us note here that $X^\circ:=X \setminus \mathcal{H}$ is a {\it very affine variety}, in fact a closed subvariety of $(\bC^*)^{n+1}$.

\bd\label{definition:mldegree-statistics} With the above notations, the \index{maximum likelihood degree} {\it ML degree} of $X  \subset \bC\bP^n$, which will be denoted by $\MLdeg(X)$, is the number of critical points of $\ell_{\un u}$ on $X_{\rm reg} \setminus \mathcal{H}=X^\circ_{\rm reg}$. 
This is the same as the ML degree of the very affine variety $X^\circ$. 

\ed

Let us next consider the \index{likelihood correspondence} {\it likelihood correspondence} $\mathcal{L}_X$, defined as the closure in $\bC\bP^n \times \bC\bP^{n+1}$ of the set
\begin{center}
$\{(\un p, \un u) \in X^\circ_{\rm reg} \times \bC^{n+1} \mid \un p \ \text{is a critical point of} \  \ell_{\un u}\vert_{X^\circ_{\rm reg}}\} .$
\end{center}
Following \cite{HS}, we make the following definition.
\bd[ML bidegrees]\label{mlbdg}
The \index{maximum likelihood bidegree} {\it $i$-th ML bidegree}  $b_i$ of $X$, $i=0,\ldots,d=\dim X$, is given by:
$$[\mathcal{L}_X]=\sum_{i=0}^{d} {b_i} [\bC\bP^i \times \bC\bP^{n+1-i}] \in A_*(\bC\bP^n \times \bC\bP^{n+1}).$$
\ed
We note immediately that $b_0=\MLdeg(X)$ and $b_d=\deg(X)$.

\br
Note that our definition of the likelihood correspondence variety $\sL_X$ yields a subvariety of $\bC\bP^n\times \bC\bP^{n+1}$, instead of $\bC\bP^n\times \bC\bP^{n}$ as used in \cite{HS}. This is justified just in Remark \ref{prco}.
\er

In \cite{MRW5}, we proved the following result, which can be seen as a stepping stone towards proving the involution conjecture (as will be discussed in the next section).
\bt{\rm \cite[Theorem 1.6]{MRW5}}\label{mrww1}
Let $X \subset \bC\bP^n$ be a $d$-dimensional closed irreducible subvariety with $X^\circ=X \setminus \mathcal{H} \neq \emptyset$. Then the total \index{Chern-Mather class} Chern-Mather class of $X^\circ$ is:
\be\label{mrww1a} c_*^{Ma}(X^\circ)=\sum_{i=0}^d (-1)^{d-i} b_i [\bC\bP^i] \in A_*(\bC\bP^n).\ee
 Here, $c_*^{Ma}(X^\circ):=c_*(\Eu_{X^\circ})$, with $c_*:CF(\bC\bP^n)\to A_*(\bC\bP^n)$ the MacPherson-Chern class transformation, and $\Eu_{X^\circ}$ is regarded as a constructible function on $\bC\bP^n$ by extending it by $0$.
\et

The key step for proving Theorem \ref{mrww1} is formula \eqref{eq_main}, applied to the reduced normal crossing boundary divisor 
$D$ in $\bC\bP^n$,
whose irreducible components are the projective hyperplanes $D_i=\{p_i=0\} \subset \bC\bP^n$ (for $i=0,\ldots, n$), together with $D_+=\{p_+:=p_0+\cdots + p_n=0\} \subset \bP^n$. The homogeneous coordinates $[p_1,\ldots, p_n, p_+]$ are used here to identify $\bC\bP^n \setminus D_+=\bC^n$ with coordinates $x_i:=\frac{p_i}{p_+}$ ($i=1,\ldots,n$), so that 
$$\bC^n \cap D_0=\{ x_1+\cdots +x_n=1\}.$$

\br
In fact, using the embedding $\bC\bP^n$ into $\bC\bP^{n+1}$  by 
$$(p_0,\dots,p_n)\mapsto (p_0,\dots,p_n,p_+)$$
with $p_+$ defined as above, we can reduce this Theorem~\ref{mrww1}
to Theorem~\ref{thm_CSM} applied to the variety $X^\circ \in (\bC^*)^{n+1}$.
\er


\subsubsection{Sectional ML degrees and the Involution Conjecture}
Besides the ML bidegrees of Definition \ref{mlbdg}, another 
natural generalization of the ML degree is provided by the sectional ML degrees introduced in \cite{HS}. In the notations of the previous subsection, these  can be defined as follows.

\bd[Sectional ML degrees]\label{secmld}
Let $X \subset \bC\bP^n$ be a closed irreducible subvariety with $X^\circ=X \setminus \mathcal{H} \neq \emptyset$. 
The \index{sectional maximum likelihood degree} {\it $i$-th sectional ML degree} of $X$ is:
$$s_i:=\MLdeg(X \cap L_{n-i}),$$
where $L_{n-i}$ is a general linear subspace of $\bC\bP^n$ of codimension $i$.
\ed
Once again, we note that $s_0=\MLdeg(X)$ and, if $d=\dim X$, then $s_d=\deg(X)$. 

In \cite{HS}, Huh and Sturmfels conjectured that the ML bidegrees and the sectional ML degrees of a variety determine each other under some involution formulas, and proved the case when the variety $X^\circ$ is smooth and Sch\"on. The Huh-Sturmfels \index{involution conjecture} {\it Involution Conjecture} was proved in full generality in \cite{MRW5}. In what follows, we formulate our result and sketch the main ideas of its proof.

\bt{\rm \cite[Theorem 1.5]{MRW5}}\label{HSconj}
Let  $X \subset \bC\bP^n$ be a $d$-dimensional closed irreducible subvariety with $X^\circ=X \setminus \mathcal{H} \neq \emptyset$, and set
\[
B_X(\pp, \uu)=(b_0 \cdot \pp^{d}+b_1\cdot \pp^{d-1}\uu+\cdots +b_d\cdot  \uu^d)\cdot \pp^{n-d},
\]
\[ S_X(\pp, \uu)=(s_0 \cdot \pp^{d}+s_1\cdot \pp^{d-1}\uu+\cdots +s_d\cdot  \uu^d)\cdot \pp^{n-d}.\] 
Then
\be\label{st1}
B_X(\pp, \uu)=\frac{\uu\cdot S_X(\pp, \uu-\pp)-\pp\cdot S_X(\pp, 0)}{\uu-\pp},\ee
\be\label{st2}
S_X(\pp, \uu)=\frac{\uu\cdot B_X(\pp, \uu+\pp)+\pp\cdot B_X(\pp, 0)}{\uu+\pp}.
\ee
\et

The proof of Theorem \ref{HSconj} follows from the geometric interpretation of $c_*^{Ma}(X^\circ)$ given in Theorem \ref{mrww1} together with an  involution formula of Aluffi (cf. \cite{Al}, as reformulated in \cite[Corollary 2.5]{MRW5}) which we recall below.

For a constructible function $\alpha$ on $\bC\bP^n$, let $\alpha_j:=\alpha\vert_{L_{n-j}}$ be the restriction of $\alpha$ to a codimension $j$ generic linear subspace. For instance, if $\alpha=\Eu_Z$ for a locally closed subvariety $Z$ of $\bC\bP^n$, then $\alpha_j=\Eu_{Z \cap L_{n-j}}$.
Consider the \index{Euler polynomial} {\it Euler polynomial} of $\alpha$, defined by:
\[ \chi_{\alpha}(t):=\sum_{j \geq 0} \chi(\alpha_j) \cdot (-t)^j.\]
For $c_*:CF(\bC\bP^n)\to A_*(\bC\bP^n)$ the Chern class transformation of MacPherson, let
\[
c_*(\alpha)=\sum_{j\geq 0} c_j [\bC\bP^j] \in A_*(\bC\bP^n),
\]
and define the corresponding \index{Chern polynomial} {\it Chern polynomial} of $\alpha$ by
\[
c_\alpha(t):= \sum_{j\geq 0} c_j t^j.
\]
In \cite{Al},  Aluffi showed that the polynomials $\chi_{\alpha}(t)$ and $c_\alpha(t)$ carry precisely the same information. (A similar result was obtained a decade earlier by Ohmoto in \cite{Oh}, but here we make use of Aluffi's formulation.) More precisely, one has the following.
\bt[\cite{Al}] \label{Aif}
The involution on polynomials (of the same degree)
\begin{center} $p(t) \longmapsto \mathcal{I}(p)(t):=\frac{t \cdot p(-t-1)+p(0)}{t+1},$
\end{center}
interchanges $c_\alpha(t)$ and $\chi_\alpha(t)$, i.e.,
 ${c_\alpha=\mathcal{I}(\chi_\alpha)}$ and 
    ${\chi_\alpha=\mathcal{I}(c_\alpha)}.$
\et

Back to the proof of the Involution Conjecture (Theorem \ref{HSconj}), let $\alpha:=\Eu_{X^\circ}$, regarded as a constructible function on $\bC\bP^n$. 
Our geometric interpretation of $c_*^{Ma}(X^\circ):=c_*(\Eu_{X^\circ})$ from Theorem \ref{mrww1} yields (with $d=\dim X$) the following identities:
\[
B_X(\pp, \uu)=(-1)^{d}c_{\alpha}\left(-\frac{\uu}{\pp}\right)\pp^n,\]
\[
S_X(\pp, \uu)=(-1)^{d}\chi_{\alpha}\left(\frac{\uu}{\pp}\right)\pp^n,
\]
Together with Aluffi's involution formula of Theorem \ref{Aif}, the above identities imply formulae \eqref{st1} and \eqref{st2}, thus proving the Involution Conjecture.


\subsection{Other developments}

\subsubsection{ML degree of mixture of two independence models}

As an application of the topological formula for the ML degree of Theorem \ref{mlds}, the second and third authors found iterated formulae for the ML degree of the variety representing the mixture of two independence models. Consider $\mathbb{C}^{mn}$ as the space of $m$ by $n$ matrices with complex number entries. Let $\mathcal{M}_{mn}\subset \mathbb{C}^{mn}$ be the subvariety corresponding to matrices of rank at most 2 and the sum of all entries being equal to 1. Let $\mathcal{M}_{mn}^\circ=\mathcal{M}_{mn}\cap (\mathbb{C}^*)^{mn}$.  Rodriguez-Wang have proved the following formula conjectured by Hauenstein, Rodriguez and Sturmfels in \cite{HRS}. 
\begin{theorem}{\rm \cite[Theorem 3.12]{RW}} For $n\geq 3$,
\[
\mathrm{MLdeg}(\mathcal{M}_{3n}^\circ)=2^{n+1}-6.
\]
\end{theorem}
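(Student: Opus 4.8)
The plan is to apply the topological formula for the ML degree of a very affine variety (Theorem~\ref{mlds}). Since $\mathcal{M}_{3n}^\circ$ is a closed subvariety of $(\bC^*)^{3n}$, it is very affine, so $\MLdeg(\mathcal{M}_{3n}^\circ)=(-1)^{d}\chi(\Eu_{\mathcal{M}_{3n}^\circ})$ with $d=\dim\mathcal{M}_{3n}^\circ$. The variety of $3\times n$ matrices of rank $\le 2$ has codimension $(3-2)(n-2)=n-2$, hence dimension $2n+2$, and the affine hyperplane $\sum_{i,j}a_{ij}=1$ cuts it properly, so $d=2n+1$ is odd and it suffices to prove
\[
\chi(\Eu_{\mathcal{M}_{3n}^\circ})=6-2^{n+1}.
\]

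First I would stratify $\mathcal{M}_{3n}^\circ=R_2\sqcup R_1$ by matrix rank ($R_k$ the locus of rank exactly $k$; rank $0$ is excluded by the torus). As $2<3\le n$, the rank--$\le 2$ determinantal variety is singular precisely along the rank--$\le 1$ locus, so $R_2$ is its smooth locus, on which $\Eu_{\mathcal{M}_{3n}^\circ}\equiv 1$, while $\Eu_{\mathcal{M}_{3n}^\circ}$ is constant equal to some integer $e$ along $R_1$; hence
\[
\chi(\Eu_{\mathcal{M}_{3n}^\circ})=\chi(R_2)+e\cdot\chi(R_1).
\]
The number $e$ is the local Euler obstruction of the rank--$\le 2$ locus at a rank--$1$ point (the extra, generic, linear condition does not change this local invariant), and a normal slice to the rank--$1$ stratum identifies the germ, up to a smooth factor, with the affine cone over the Segre variety $\bP^1\times\bP^{n-2}$; its value can be read off from the known local Euler obstructions of generic determinantal varieties (cf.~\cite{GGR}), or computed topologically as in~\cite{RW18}. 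For $R_1$ itself: a rank--$1$ matrix in $\mathcal{M}_{3n}^\circ$ is $uv^{T}$ with $u\in(\bC^*)^3$, $v\in(\bC^*)^n$ and $(\sum_i u_i)(\sum_j v_j)=1$, and rescaling so that $\sum_i u_i=1$ gives an isomorphism
\[
R_1\;\cong\;\{u\in(\bC^*)^3:\textstyle\sum_i u_i=1\}\times\{v\in(\bC^*)^n:\textstyle\sum_j v_j=1\}.
\]
A generic affine hyperplane section of $(\bC^*)^k$ is the complement of $k$ hyperplanes in general position in $\bC^{k-1}$, hence has Euler characteristic $\sum_{i=0}^{k-1}(-1)^{i}\binom{k}{i}=(-1)^{k+1}$; therefore $\chi(R_1)=(-1)^{4}\cdot(-1)^{n+1}=(-1)^{n+1}$.

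The remaining, and main, task is to compute $\chi(R_2)$. I would fibre $R_2$ over the projective plane $\check{\bP}^2$ of $2$-planes $P\subset\bC^3$ by sending a rank--$2$ matrix to its column span (which is never a coordinate plane, since no row vanishes); the fibre over $P$ is the set of $n$-tuples of vectors in $P\cap(\bC^*)^3$ that span $P$ and whose total coordinate sum equals $1$. As $P$ varies, the arrangement cut out on $P$ by the three coordinate hyperplanes and by $\{x_1+x_2+x_3=0\}$ changes combinatorial type, so one stratifies $\check{\bP}^2$ accordingly. On the open stratum $\check{\bP}^2_{\mathrm{gen}}$ where this arrangement is as generic as possible, $P\cap(\bC^*)^3\cong\bC^*\times(\bP^1\setminus\{\text{3 points}\})$, and writing the coordinate--sum functional in these radial coordinates, a direct inclusion--exclusion (using multiplicativity of $\chi$ over the resulting fibrations, together with the $k$-hyperplane count above) gives fibre Euler characteristic
\[
(-1)^{n+1}(-2)^{n}-(-2)(-1)^{n+1}=-2^{n}-2(-1)^{n},
\]
the exponential coming from the factor $\chi\bigl(\bP^1\setminus\{\text{4 points}\}\bigr)^{n}=(-2)^{n}$. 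Since $\check{\bP}^2_{\mathrm{gen}}$ is $\check{\bP}^2$ with six lines removed --- three pencils of planes through a coordinate axis and three through a line $\{x_i=0\}\cap\{x_1+x_2+x_3=0\}$, meeting in four triple points and three double points --- one finds $\chi(\check{\bP}^2_{\mathrm{gen}})=3-\bigl(6\cdot 2-(4\cdot 2+3)\bigr)=2$; this already forces $\chi(R_2)=-2^{n+1}+O(1)$ and hence $\MLdeg(\mathcal{M}_{3n}^\circ)=2^{n+1}+O(1)$. Collecting the bounded contributions of the lower-dimensional strata of $\check{\bP}^2$ (together with the subtraction of the ``$\operatorname{span}\subsetneq P$'' locus and the term $e\cdot\chi(R_1)$) then pins the constant term to $-6$. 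Equivalently --- this is the ``iterated'' viewpoint --- one may set up a recursion in $n$ by adjoining one column at a time, with base case $n=3$, where $\mathcal{M}_{33}^\circ$ is the cubic hypersurface $\{\det=0\}\subset(\bC^*)^{9}$ sliced by $\sum a_{ij}=1$ and a direct count gives $\MLdeg(\mathcal{M}_{33}^\circ)=10=2^{4}-6$; solving the recursion yields $2^{n+1}-6$ for all $n\ge 3$.

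The hard part is precisely the evaluation of $\chi(R_2)$: tracking how the rank--$2$ condition, the torus constraint on all $3n$ entries, and the single linear constraint $\sum a_{ij}=1$ interact forces a careful stratification of $\check{\bP}^2$ by arrangement type and a delicate inclusion--exclusion, and it is this bookkeeping that manufactures both the exponential $2^{n+1}$ and the constant $-6$; a second genuine point is to identify the local Euler obstruction $e$ along the rank--$1$ stratum. Once both are in hand, the final assembly via $\chi(\Eu_{\mathcal{M}_{3n}^\circ})=\chi(R_2)+e\cdot\chi(R_1)$ and comparison with $6-2^{n+1}$ is routine.
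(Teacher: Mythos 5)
The survey itself does not prove this theorem; it records the result from \cite{RW} (Theorem 3.12 there) and only indicates that the proof combines the topological formula of Theorem~\ref{mlds} with ``iterated formulae.'' So there is no proof in the paper to compare against, and your attempt must be assessed on its own.

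Your framework --- apply Theorem~\ref{mlds} to the very affine variety $\mathcal{M}_{3n}^\circ\subset(\bC^*)^{3n}$, stratify by rank into $R_2\sqcup R_1$ (with no rank-$0$ stratum inside the torus), and write $\chi(\Eu_{\mathcal{M}_{3n}^\circ})=\chi(R_2)+e\cdot\chi(R_1)$ --- is correct, and the pieces you actually execute are sound: the dimension $d=2n+1$, the fact that the rank-$\le 2$ determinantal variety is singular exactly along rank $\le 1$, and the computation $\chi(R_1)=(-1)^{n+1}$ via normalizing $A=uv^T$ to $\sum_i u_i=\sum_j v_j=1$ and using that a generic affine hyperplane section of $(\bC^*)^k$ has Euler characteristic $(-1)^{k+1}$.

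The problem is that the two computations you yourself flag as ``the hard part'' are exactly the ones left undone. The local Euler obstruction $e$ along the rank-$1$ stratum is reduced, via a normal slice, to the Euler obstruction at the vertex of the cone over $\bP^1\times\bP^{n-2}$, but its value is never produced. More seriously, $\chi(R_2)$ is only gestured at: the fiber Euler characteristic $-2^n-2(-1)^n$ over the open stratum of $\check{\bP}^2$ is asserted without a derivation (the way the sum-to-one functional and the spanning condition interact with the factorization $P\cap(\bC^*)^3\cong\bC^*\times(\bP^1\setminus\{3\ \text{pts}\})$ is not worked out), the fibers over the six degenerate lines and seven special points of $\check{\bP}^2$ --- which themselves grow exponentially in $n$ --- are never enumerated, and the claim that their total contribution ``pins the constant term to $-6$'' is an assertion rather than a computation. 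Note also that your open-stratum contribution $2\cdot(-2^n-2(-1)^n)=-2^{n+1}-4(-1)^n$ already has a sign-alternating subleading term, so the lower strata must do substantial cancellation work that is not visible in your sketch. The recursive viewpoint you mention in passing (adjoin one column at a time, base case $n=3$ where $\MLdeg(\mathcal{M}_{33}^\circ)=10$) is closer in spirit to the ``iterated formulae'' of \cite{RW} and is probably the more tractable way to organize the bookkeeping. As written, the argument has the right shape and the right easy parts, but the exponential and the constant $-6$ are not actually manufactured.
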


\subsubsection{Computing local Euler obstruction from sectional ML degrees}
This survey focusses on topological methods to study the degree of an  optimization problem. 
The other direction where optimization degrees are used to gain  insights on invariants is also of interest. For instance, we now discuss how the maximum likelihood degree is  used to compute the \index{local Euler obstruction} local Euler obstruction function of a variety at a point. 

Let $\T=\CStarN$ be an affine complex torus with coordinates $z_1,\dots,z_N$. 
Let $X$ be a closed pure dimensional (not necessarily irreducible) subvariety of $T$.
 Let $\linearf$ denote a linear function such that its zero set is a hyperplane  $\Hf$ in $\T$.
 Then, we have a natural closed embedding of $T\setminus \Hf$ to $\CStarNplusOne$ given by 
$$(z_1, \ldots, z_N, f): T\setminus \Hf\to \CStarNplusOne.$$ 
For a closed subvariety $X$ of $T$ and a hyperplane $\Hf\subset T$, we define $\MLdeg(X\setminus \Hf)$ to be the maximum likelihood degree of $X\setminus \Hf$ as a closed subvariety of $\CStarNplusOne$ via the above embedding as in Definition~\ref{definition:mldegree-very-affine}.

\begin{theorem}\label{thm:ml-to-local-euler}{\rm\cite[Theorem 1.7]{RW18}}
Let $X$ be a pure $d$-dimensional closed subvariety of $\T$, and let $P\in X$ be any closed point.
Furthermore, let $\Hone, \ldots, \Hd$ denote general hyperplanes in $T$ passing through $P$.
For  $k\in\{0,1,\dots,d+1\}$, define
 the $k$-th \index{removal ML degree} {\it removal ML degree with respect to} $P$ by  
$$r_k(P,X):=\MLdeg\left(X\cap \Hone\cap\Htwo\cap\dots\cap\HkMinusOne\setminus\Hk\right),$$ 
with the conventions   
$r_0(P,X)=\MLdeg(X)$ and $r_1(P,X)=\MLdeg\left(X\setminus \Hone\right)$.
Then, $\Eu_X(P)$ is given by an alternating sum of removal ML degrees
\begin{equation}\label{eq:MLobfun}
\Eu_X(P)  :=
(-1)^d r_0(P,X)+(-1)^{d-1} r_1(P,X)+\cdots+r_d(P,X)-r_{d+1}(P,X).
\end{equation}
for any point $P\in X$. 
\end{theorem}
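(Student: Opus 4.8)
The plan is to convert each removal ML degree into a weighted Euler characteristic via the topological formula $\MLdeg(Z)=(-1)^{\dim Z}\chi(\Eu_Z)$ of Theorem~\ref{mlds}, and then to collapse the alternating sum on the right of \eqref{eq:MLobfun} into a telescoping expression for $\Eu_X(P)$ under iterated generic hyperplane sections through $P$. For notation, set $X_j:=X\cap\Hone\cap\cdots\cap H_P^{(j)}$ for $0\le j\le d$, so that $X_0=X$; for general hyperplanes through $P$, each $X_j$ is a pure $(d-j)$-dimensional closed subvariety of $\T$ containing $P$, and $X_d$ is a finite set of points, one of which is $P$.

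First I would check that every variety occurring in a removal ML degree is very affine: $X_{k-1}$ is closed in $\T$, and $X_{k-1}\setminus\Hk$ is closed in $\T\setminus\Hk$, hence in $\CStarNplusOne$ via the embedding $(z,f_k)$ with $f_k$ a linear form defining $\Hk$. So Theorem~\ref{mlds} applies; using that $\Eu$ is a local invariant (so $\Eu_{X_{k-1}\setminus\Hk}=\Eu_{X_{k-1}}|_{X_{k-1}\setminus\Hk}$), the additivity of the weighted Euler characteristic, and $X_{k-1}\cap\Hk=X_k$, I get for $1\le k\le d$
\[
r_k(P,X)=(-1)^{d-k+1}\big(\chi(\Eu_{X_{k-1}})-\chi(\Eu_{X_{k-1}}|_{X_k})\big),
\]
together with $r_0(P,X)=(-1)^d\chi(\Eu_X)$, and $r_{d+1}(P,X)=\#X_d-1=\chi(\Eu_{X_d})-1$ (the last because $X_d$ is $0$-dimensional with $\Eu_{X_d}\equiv1$, and a general $H_P^{(d+1)}$ through $P$ meets $X_d$ only at $P$).

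Substituting these into the right-hand side of \eqref{eq:MLobfun} and writing $a_j:=\chi(\Eu_{X_j})$, $b_k:=\chi(\Eu_{X_{k-1}}|_{X_k})$, the signs arrange so that $(-1)^{d-k}r_k=b_k-a_{k-1}$ for $1\le k\le d$ and $(-1)^dr_0=a_0$, and a routine cancellation collapses everything to
\[
(-1)^dr_0+\cdots+r_d-r_{d+1}=1+\sum_{k=1}^{d}(b_k-a_k)=1+\sum_{k=1}^{d}\chi(\delta_k),\qquad \delta_k:=\Eu_{X_{k-1}}|_{X_k}-\Eu_{X_k}\in CF(X_k),
\]
using linearity of $\chi$ on constructible functions. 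It then remains to prove $\sum_{k=1}^{d}\chi(\delta_k)=\Eu_X(P)-1$, and for this the key point is that each $\delta_k$ is supported at the single point $P$: after fixing a Whitney stratification of $X_{k-1}$, a general hyperplane $\Hk$ through $P$ is transverse to all of its strata at every point other than $P$, and the local Euler obstruction is unchanged under a transverse hyperplane section, so $\Eu_{X_k}(x)=\Eu_{X_{k-1}}(x)$ for all $x\in X_k\setminus\{P\}$. Granting this, $\chi(\delta_k)=\delta_k(P)=\Eu_{X_{k-1}}(P)-\Eu_{X_k}(P)$, and the sum telescopes to $\Eu_{X_0}(P)-\Eu_{X_d}(P)=\Eu_X(P)-1$ since $\Eu_{X_d}(P)=1$; together with the previous display this yields \eqref{eq:MLobfun}.

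The hard part is this last step: one must make rigorous the genericity claims — that general hyperplanes through $P$ keep all the $X_j$ pure-dimensional and are transverse, away from $P$, to a successively chosen Whitney stratification — and invoke the invariance of the local Euler obstruction under transverse hyperplane sections; the bottom-dimensional bookkeeping defining $r_{d+1}$ also needs a little care. Everything else is purely formal once Theorem~\ref{mlds} is available.
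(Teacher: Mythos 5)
Your argument is correct, and it is essentially the same telescoping argument used in the cited source \cite[Theorem~1.7]{RW18} (the survey itself gives no proof, only the citation): convert each removal ML degree into a signed weighted Euler characteristic via Theorem~\ref{mlds}, split off the hyperplane section by additivity of $\chi$, observe that the difference $\Eu_{X_{k-1}}|_{X_k}-\Eu_{X_k}$ is concentrated at $P$ because a general hyperplane through $P$ is Whitney-transversal to a stratification of $X_{k-1}$ away from $P$ and $\Eu$ is invariant under such transversal slicing, and then telescope down to $\Eu_X(P)-\Eu_{X_d}(P)=\Eu_X(P)-1$. The sign bookkeeping and the bottom term $r_{d+1}=\#X_d-1$ both check out.

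Two small points you correctly flag but could tighten. First, Theorem~\ref{mlds} is stated for an irreducible very affine variety, while here $X$ is only assumed pure-dimensional; the extension is by summing over irreducible components, using that $\Eu_X=\sum_i \Eu_{X_i}$ and that critical loci for a generic master function on the $X_i$ are disjoint. Second, the invariance of the local Euler obstruction under a Whitney-transversal hyperplane slice at a point $x$ (namely $\Eu_{X\cap H}(x)=\Eu_X(x)$) deserves an explicit citation --- it is a consequence of the L\^e--Teissier theory of local polar multiplicities and is used as such in \cite{RW18}; without it the statement ``$\delta_k$ is supported at $P$'' is only asserted. With those two references added, the proof is complete and matches the original in strategy.
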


\begin{remark}
When $f$ is given by the sum of the coordinates, then 
$$r_1(P,X)=\MLdeg\left(X\setminus \Hone\right)$$ is the ML degree appearing in Definition~\ref{definition:mldegree-statistics} 
\end{remark}

\begin{example}

Consider a general very affine curve $X$ of degree $d$ in 
$\left(\mathbb{C}^*\right)^2$. 
We have the following values for the removal ML degrees, where $P_0$ is a general point in $\left(\mathbb{C}^*\right)^2$ and $P_1$ is a smooth point on the curve:
$$
\begin{array}{rccc|ccc}
k: & 0 & 1 & 2 &\Eu_X \\
\hline
r_{k}\left(P_0,X\right): & d^2  & d^2+d &  d&	0\\
r_{k}\left(P_1,X\right): & d^2  & d^2+d &  d-1&	1.\\
\end{array}
$$

If the very affine curve $X$ is the nodal cubic we have the following values, where 
$P_2$ is the singular points of the curve:
$$
\begin{array}{rccc|ccc}
k: & 0 & 1 & 2 &\Eu_X \\
\hline
r_{k}\left(P_0,X\right): &7  & 10 & 3&	0\\
r_{k}\left(P_1,X\right): & 7  & 10 &  2&	1\\
r_{k}\left(P_2,X\right): & 7  & 10 &  1&  2.\\
\end{array}
$$

\end{example}

\subsubsection{ML degree of a sparse polynomial system}
Sparse polynomial systems appear in many areas of applied algebraic geometry. 
In this subsection we mention how the ML degree of a large class of models is determined by a mixed volume of Newton polytopes  of a sparse polynomial system called the Lagrange likelihood equations. 

Let $A_1,\dots, A_k$ denote nonempty finite subsets of $\mathbb{N}_{\geq 0}^k$. 
Suppose $f_1,\dots, f_k$ are polynomials 
of the form
\[
f_i(p) =\sum_{a\in A_i} c_{i,a} p^a, \quad i\in\{1,\dots, k\}
\]
and the coefficients $\{c_{i,a}\}$ are generic. (Note, we use the standard multi-index notation $p^a$ for the monomial with exponent vector $a$.) 
The system of equations $f_1(p)=\dots=f_k(p)=0$ is 
said to be a \index{sparse polynomial system}  {\it sparse polynomial system} in $n$ unknowns $p=(p_1,\dots,p_n)$ with monomial supports $A_1,\dots,A_k$. 
The generic coefficients condition ensures that $V(f_1,\dots,f_k)\cap(\CC^*)^n$ is either empty or reduced with codimension~$k$. 

For  $(u_1, \dots,u_n)\in \CC^{n}$, 
the \index{Lagrangian function} {Lagrangian function} for  maximum likelihood estimation on the algebraic model $V(f_1,\dots,f_k)$ is defined as 
\begin{align}
    \lagObjective(p_1,\ldots, p_n, \lambda_1,\ldots, \lambda_k) &:= \sum_{i=1}^n u_i \log(p_i) - \sum_{j=1}^k \lambda_j f_j(p).
\end{align}
The partial derivatives of $\lagObjective$~are 
\begin{align}
\frac{\partial}{\partial p_i} \lagObjective &= \frac{u_i}{p_i} - \frac{\partial}{\partial p_i} \Big( \sum_{j=1}^k \lambda_j f_j \Big), &&\quad i =1,\dots, n, \\
\frac{\partial }{\partial \lambda_j} \lagObjective &= -f_j, &&\quad j =1,\dots,k. \label{eq:partiallambda}
\end{align}
After clearing denominators of the partial derivatives, we  have a sparse polynomial system in $n+k$ unknowns that we denote by  $\nabla\lagObjective=0$ and call the  \index{ Lagrange likelihood equations} Lagrange likelihood equations.
The solutions to $\nabla\lagObjective=0$ with $p_1\dots p_n \neq 0$
correspond to the set of critical points of $\sum_{i=1}^n u_i \log(p_i)$ restricted to $V(f_1,\dots,f_k)\cap (\bC^{*})^n$. 
Thus, counting the number of solutions to 
$\nabla\lagObjective=0$ with $p_1\dots p_n \neq 0$ is the ML degree of $V(f_1,\dots,f_k)$, the number of critical points of the the log-likelihood function $\sum_{i=1}^n u_i \log(p_i)$ on $V(f_1,\dots,f_k)\cap (\bC^{*})^n$.

Recall that if  $g=\sum_{a\in A} c_a x^a$  is a sparse polynomial, 
then the \index{Newton polytope} {\it Newton polytope} of $g$ is the convex hull of the exponent vectors of $A$.
Given $m$ convex bodies 
$K_1,\dots,K_m$ 
in $\mathbb{R}^m$, 
and positive real numbers $\mu_1,
\dots,\mu_m$,
the volume of the \index{Minkowski sum} Minkowski sum
$\mu_1 K_1 +\cdots +\mu_m K_m$
as a function of $\mu_1,\dots, \mu_m$
is a homogeneous polynomial $Q(\mu_1,\dots,\mu_m)$ of degree $m$.
The {\it mixed volume} of 
$K_1,\dots,K_m$
is defined to be 
$\frac{1}{m!}$ times
the coefficient of $\mu_1\cdots\mu_m$ in $Q$.
For more details about mixed volumes see, e.g., ~\cite{E}. 

With the notation now set,  
we state the main result and corollary of \cite{LNRW2023}.

\begin{theorem}{\rm \cite[Theorem 2.2]{LNRW2023}}\label{theorem:MLsparse-main}
For general sparse polynomials $F= ( f_1,\dots,f_k)$, 
the ML degree of $V(F)$ equals the mixed volume 
of the Newton polytopes of the polynomials in the system $\nabla \lagObjective=0$.
\end{theorem}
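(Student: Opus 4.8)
The plan is to recognize the Lagrange likelihood equations $\nabla\lagObjective=0$, after clearing denominators, as a \emph{square} sparse polynomial system in the $n+k$ unknowns $(p_1,\dots,p_n,\lambda_1,\dots,\lambda_k)$ and to count its solutions in the torus $(\CC^*)^{n+k}$ by Bernstein's theorem (the BKK bound). Writing $g_i:=u_i-p_i\,\partial_{p_i}\big(\sum_{j}\lambda_j f_j\big)=u_i-\sum_{j}\lambda_j\sum_{a\in A_j}a_i\,c_{j,a}\,p^{a}$ for $i=1,\dots,n$ and $g_{n+j}:=f_j$ for $j=1,\dots,k$, the Newton polytope of $g_i$ is the convex hull of the origin together with the points $(a,e_j)$ (with $e_j$ the $j$th basis vector of the $\lambda$-coordinates) for $a\in A_j$ such that $a_i\neq 0$, while the Newton polytope of $g_{n+j}$ is $\conv(A_j)$. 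By the discussion preceding the theorem, $\MLdeg(V(F))$ is the number of solutions of $g_1=\dots=g_{n+k}=0$ with $p_1\cdots p_n\neq 0$, and for generic data $u$ this set is finite. So the theorem reduces to two claims: (a) every such solution lies in the full torus $(\CC^*)^{n+k}$, so that this count equals the number of torus solutions of the system; and (b) the number of torus solutions equals the mixed volume of the Newton polytopes of $g_1,\dots,g_{n+k}$.

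For (a) the point is that the $\lambda_j$ cannot vanish at a solution with $p_1\cdots p_n\neq 0$. Since $\{c_{j,a}\}$ is generic, $V(F)\cap(\CC^*)^n$ is empty or a smooth complete intersection of codimension $k$, so along it the Jacobian $\big(\partial_{p_i}f_j\big)$ has rank $k$; hence at a solution the equations $g_1=\dots=g_n=0$ form a consistent linear system $M(p)\,\lambda=u$ with $M(p)=\big(p_i\,\partial_{p_i}f_j(p)\big)_{i,j}$ of rank $k$, which determines $\lambda$ uniquely from $p$ by a fixed rational map independent of $u$. As the solution locus is finite, for generic $u$ none of the coordinates $\lambda_j$ vanishes. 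Conversely every torus solution has $p\neq 0$ and, by the Lagrange-multiplier characterization, is a critical point of $\sum_i u_i\log p_i$ on $V(F)\cap(\CC^*)^n$, so the two counts agree.

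For (b), Bernstein's theorem says that a square Laurent-polynomial system in $(\CC^*)^{n+k}$ with prescribed Newton polytopes has at most their mixed volume isolated torus solutions, counted with multiplicity, with equality as soon as the coefficient vector avoids a certain proper Zariski-closed ``Bernstein-nongeneric'' subset. The content of the theorem is therefore that the coefficient vector of our \emph{structured} system $g_1,\dots,g_{n+k}$ — whose entries depend affine-linearly on the free parameters $\big(\{c_{j,a}\},\{u_i\}\big)$, the $\lambda_j$-coefficient of $p^a$ in $g_i$ being the fixed multiple $a_i\,c_{j,a}$ — lands in the Bernstein-generic locus for generic $(c,u)$. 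I expect this to be the main obstacle. The natural approach is to show that for generic $(c,u)$ no initial-form subsystem $\{g_i^{w}=0\}$, with $w$ ranging over inner facet normals of the Minkowski sum, has a solution in the torus, via a dimension count on the incidence variety of triples $\big(w,\ \text{a torus solution of }\{g_i^{w}=0\},\ (c,u)\big)$ projecting to the parameter space. The delicate part is that the $g_i$ obey built-in algebraic relations coming from the Lagrangian — for instance the weighted Euler identity $\sum_i v_i\,p_i\,\partial_{p_i}f_j=(\text{affine degree})\,f_j$ whenever $A_j$ lies on an affine hyperplane with normal $v$, giving a linear dependence among the $g_i$ modulo $(f_1,\dots,f_k)$ — so one must verify that no such relation forces a facial subsystem to be identically consistent, or equivalently exhibit a single Bernstein-generic member of the family. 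Once (a) and (b) are established the theorem follows.
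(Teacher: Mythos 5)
Your strategy — clear denominators, view $\nabla\lagObjective=0$ as a square sparse system in $(p,\lambda)\in(\CC^*)^{n+k}$, show its torus solutions are in bijection with critical points of the log-likelihood, and invoke Bernstein's theorem — is exactly the route taken in \cite{LNRW2023}, and you have correctly identified both lemmas the argument needs: (a) nonvanishing of the $\lambda$-coordinates at every relevant solution, and (b) sharpness of the BKK bound despite the highly structured coefficients.

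However, what you have written is an outline, not a proof, and the gap sits precisely at the step you yourself flag as ``the main obstacle.'' You observe that the coefficient vector of $(g_1,\dots,g_{n+k})$ lies on a thin affine-linear slice of the full coefficient space (the $\lambda_j p^a$-coefficient of $g_i$ being the forced multiple $a_i c_{j,a}$, and the $f_j$'s appearing verbatim as $g_{n+j}$), and that one must therefore check by hand that this slice meets the Bernstein-generic locus. You then write ``I expect this to be the main obstacle,'' ``the natural approach is \dots,'' and ``one must verify that no such relation forces a facial subsystem to be identically consistent'' — but you never actually carry out the facial-subsystem analysis or the dimension count on the incidence variety, nor do you exhibit a single Bernstein-generic member of the family. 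As the paper's own remark emphasizes, this is the difficult part of the theorem; it is not a routine genericity reflex, because the Euler-type identities you mention really do create linear dependences among the $g_i$, and one has to see why those dependences never force a toric root of a facial subsystem. Until this is done, the statement ``the number of torus solutions equals the mixed volume'' is unjustified — Bernstein's theorem only gives an inequality for arbitrary coefficients.

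There is also a small slip in your argument for (a). You assert that the linear system $M(p)\lambda=u$ ``determines $\lambda$ uniquely from $p$ by a fixed rational map independent of $u$.'' That is not so: $M(p)$ is an $n\times k$ matrix of rank $k$, so for a consistent right-hand side the solution $\lambda$ is unique, but it is linear in $u$, not independent of it. What you actually want is the incidence-variety argument implicit in your last sentence of that paragraph: the locus $\{(p,\lambda)\in(V(F)\cap(\CC^*)^n)\times\CC^k\}$ maps to $u$-space by $u_i=\sum_j\lambda_j\,p_i\partial_{p_i}f_j(p)$, is irreducible of dimension $n$, and its intersection with $\{\lambda_j=0\}$ has dimension $n-1$, so its image in $u$-space is a proper subvariety. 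That correctly yields (a), but it should be stated that way; the ``independent of $u$'' phrasing as written does not support the conclusion.

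So: right approach, right decomposition into lemmas, but the crux — Bernstein-genericity of the Lagrange system — is left as a plan rather than proved, and the $\lambda\neq 0$ argument needs to be rephrased.
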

 
\br 
The proof of this result in \cite{LNRW2023} 
relies on lemmas such as showing that any solution of  $\nabla \lagObjective =0$ must have nonzero $\lambda$ coordinates. 
The more difficult part is addressing the fact that the sparse system  $\nabla\lagObjective=0$ does not have generic coefficients because of the dependencies of the coefficients of $\frac{\partial}{\partial \lambda_j}\lagObjective$ on $\frac{\partial}{\partial p_i}\lagObjective$.
\er

Moreover, the generic coefficients hypothesis on $f_1$ can be relaxed when  $f_1(p)= p_1 + \ldots + p_n -1$  \cite[Remark 2.23]{LNRW2023}. This sum to one constraint appears  
in Definition~\ref{definition:mldegree-statistics} with $p=(p_0,\dots,p_n).$

\bc{\rm \cite[Corollary 2.14]{LNRW2023}}
Consider two general sparse polynomial systems: 
$F =( f_1,\ldots, f_k )$ and 
$G = ( g_1,\ldots, g_k ) $, 
where the monomial supports are $A_1,\dots, A_k$ and $B_1,\dots, B_k$ 
respectively.  
If $\conv(A_i)=\conv(B_i)$ for $i=1,\dots, k$,
then the ML degree of $V(F)$ equals the ML degree of $V(G)$.
\ec
This corollary is surprising because the Newton polytope of the Lagrange likelihood equations can be vastly different even though $\conv(A_i)=\conv(B_i)$ for $i=1,\dots,k$ as shown in \cite[Example~2.15]{LNRW2023}.

\subsubsection{ML data discriminant for positive real solutions}

Throughout this survey we have focussed on the algebraic degree of an optimization problem.
The ML degree is an intrinsic measure of complexity for solving the likelihood equations. A statistician cares about finding real solutions in the probability simplex. The space of data can be partitioned into full dimensional open cells by taking the complement of a hypersurface known as the \index{data discriminant} ML data discriminant. 
For each open cell, the number of complex critical points with real coordinates strictly greater than zero is constant. For details see \cite{RT-discriminant, RT-discriminant-2}.

\subsubsection{Gaussian models and symmetric matrices}

Likelihood geometry extends beyond the discrete models that we have discussed thus far. 
In this section the ambient space for our statistical models is the cone of $m\times m$ positive definite matrices $\PD$ (instead of the probability simplex). 
The motivation comes from the fact that Gaussians with their mean $\mu$ centered at zero are determined by their covariance matrix $\Sigma$ (which is positive definite). For concreteness, recall that the Gaussian density is given by
\begin{equation*}
    f_\Sigma( \un x)= \frac{1}{\sqrt{\det(2\pi \Sigma)}} \exp \left( -\frac12 {\un x}^\top \Sigma^{-1} \un x \right), \quad \un x \in \RR^m.
\end{equation*}
For these models, the maximum likelihood estimation problem takes the following form. 
We assume  
i.i.d. data $X_1,...,X_N$, where each random variable $X_i$ has a Gaussian distribution 
$\mathcal{N}(0,\Sigma)$ of mean $0$, covariance $\Sigma$, 
and $N$ is the sample size. 
Our data is a collection of $N$ random vectors in $\RR^m$, and the i.i.d. assumptions allow us work with
the \index{sample covariance matrix} {\it sample covariance matrix} 
\begin{equation}
    S = \frac{1}{N} X_i X_i^\top,
\end{equation}
whereas in the discrete setting our data was a vector of counts. 
The \index{log-likelihood} {\it log-likelihood function} for \index{Gaussian model} Gaussian models takes the form (see, e.g., \cite[Section 7.1]{Su})
\begin{equation}\label{eq:Gaussianlik}
    \ell(\Sigma) = -\frac{n}{2}(\log\det(\Sigma)+ \mathrm{tr}(S\Sigma^{-1})).
\end{equation}
The gradient of $\ell$ is $\nabla(\ell)(\Sigma)$ is proportional to  $\Sigma - S$, so that $\hat{\Sigma} = S$ is the global optimum whenever $S \in \mathcal{M}$. 

This setup leads to a wide range of models whose likelihood geometry can be studied.
This include directed graphical models \cite[Chapter 13]{Su},  undirected graphical models~\cite{SU2010,Uhler}, and more recently linear subspaces of symmetric matrices~\cite{BCEMR,DKRS,EFSS,JKW, AGKM, AZ}.
In addition,  \cite{DLL} has generalized these concepts to 
maximum likelihood degree of a homogeneous polynomial  on a (smooth) projective variety $X$. 


\section{Linear optimization {on a variety}}\label{lin}
{This section is devoted to \index{linear optimization on varieties} optimizing a linear objective function on a variety,}
as well as to the study of the corresponding algebraic degree, which in \cite{MRW6} is called the {\it linear optimization (LO) degree}.

\subsection{Linear optimization degree}

\bd[Linear optimization (LO) degree] \index{linear optimization degree}
The {\it linear optimization (LO) degree} $\LOdeg(X)$ of an affine variety $X\subset \bC^n$ is the number of critical points of a general linear function restricted to the smooth locus $X_{\textrm{reg}}$ of $X$. 
\ed

The LO degree, which is already computed by Theorem \ref{edf}, gives an algebraic measure to the complexity of optimizing a linear function over algebraic models $X_{\reg} \cap \mathbb{R}^n$,
which are prevalent in algebraic statistics and applied algebraic geometry. 

An equivalent definition of the linear optimization degree $\LOdeg(X)$ of an affine variety $X\subset \bC^n$ can be given as follows.   Let $T^*_X\bC^n$ be  the {affine} \index{conormal variety} conormal variety of $X$, i.e., the closure of the conormal bundle $T^*_{X_{\textrm{reg}}}\bC^n$ of $X_{\textrm{reg}}$ in $T^*\bC^n$. Consider the trivialization $T^*\bC^n\cong \bC^n\times \bC^n$ of the cotangent bundle, where the first factor is the base and the second is the fiber. Then the projection of $T^*_X\bC^n$ to the second factor $\bC^n$ is generically a finite map, and its degree is equal to $\LOdeg(X)$.


\subsection{Linear optimization bidegrees and Chern-Mather classes}
Similar to the ML degrees, we can also define LO bidegrees $b_i(X)$ and sectional LO degrees $s_i(X)$, and investigate how these are related.

\bd[LO bidegrees] The \index{linear optimization bidegree} {\it LO bidegrees} of an irreducible affine variety $X\subset \bC^n$, denoted by $b_i(X)$ or simply $b_i$, are defined as the bidegrees of $T^*_X\bC^n$. Specifically, if $\bC^n\times \bC^n\subset \bC\bP^n\times \bC\bP^n$ is the standard compactification, the LO bidegrees of $X$ are the coefficients of the Chow class of the closure $\overline{T^*_X\bC^n}$ of $T^*_X\bC^n$ in $\bC\bP^n\times \bC\bP^n$, that is,
\be\label{bi}
[\overline{T^*_X\bC^n}]=b_0 [\bC\bP^0\times \bC\bP^n]+b_1[\bC\bP^{1}\times \bC\bP^{n-1}]+\cdots +b_d[\bC\bP^{d}\times \bC\bP^{n-d}]\in A_*(\bC\bP^n\times \bC\bP^n)
\ee
where $d=\dim X$. 
\ed 
In particular, $b_0(X)=\LOdeg(X)$.

Fixing the standard compactification $\bC^n\subset \bC\bP^n$, we regard the local Euler obstruction function $\Eu_X$ of the affine variety $X \subset \bC^n$ as a constructible function on $\bC\bP^n$, with value $0$ outside of $X$. Applying to it  the Chern-MacPherson transformation $c_*:CF(\bC\bP^n) \to A_*(\bC\bP^n)$,  we get as before \index{Chern-Mather class} Chern-Mather class of $X$:
\be\label{cma}
c_*^{Ma}(X):=c_*(\Eu_X)=a_0[\bC\bP^0]+a_1 [\bC\bP^1]+\cdots+a_d [\bC\bP^d]\in A_*(\bC\bP^n).
\ee

For notational convenience, in \eqref{bi} and \eqref{cma} we set $a_j=b_j=0$ if $j\notin \{0, 1, \ldots, d\}$.

In \cite{MRW6}, we describe the relation between the LO bidegrees and the total \index{Chern-Mather class} Chern-Mather class of $X$ as follows.
\bt{\rm \cite[Theorem 1.1]{MRW6}}\label{thm_main}
For any $d$-dimensional irreducible affine variety $X\subset \bC^n$, the sequences $\{a_i\}$ and $\{b_i\}$ defined as in \eqref{bi} and \eqref{cma} satisfy the identity
\be\label{id1}
\sum_{0\leq i\leq d}b_i t^{n-i}= \sum_{0\leq i\leq d}a_i (-1)^{d-i} t^{n-i}(1+t)^{i}.
\ee
\et

The formula in Theorem \ref{thm_main} shows that the Chern-Mather class of the affine variety $X$ is determined by the LO bidegrees. 
The proof of this result uses the same ideas as in the proof of Theorems \ref{thm_CSM} and \ref{mrww1}, based on formula \eqref{eq_main}. However, the relationship is more involved than the corresponding result for the Chern-Mather class of very affine varieties (cf. Theorems \ref{thm_CSM}) since, while the logarithmic cotangent bundle of the pair $(\bC\bP^n, \bC\bP^n\setminus (\bC^*)^n)$ is trivial, the one of $(\bC\bP^n, \bC\bP^n\setminus \bC^n)$ is not. So the logarithmic cotangent bundle $E\coloneqq \Omega^1_{\bC\bP^n}(\log H_\infty)$ is not trivial. Nevertheless, as shown in \cite[Proposition 4.1]{MRW6}, the twisted bundle $E(H_\infty):=E \otimes \cO_{\bC\bP^n}(H_\infty)=\Omega^1_{\bC\bP^n}(\log H_\infty)(H_{\infty})$ is trivial, and formula \eqref{id1} is the result of tracking the relationship between the Chern classes of the closure of $T^*_X\bC^n$ in $E$ and $E(H_\infty)$, respectively.

\br
Let us briefly compare our approach to computing Chern-Mather classes of affine varieties with some of the more classical works \cite{Sab, Al3,PP}.
As above, let $X \subset \bC^n$ be an irreducible affine variety with conormal space $T^*_X\bC^n \subset T^*\bC^n$. Instead of taking the fiberwise projectivization $C(X,\bC^n):=\bP(T^*_X\bC^n) \subset \bP(T^*\bC^n)$ as in, e.g., Sabbah \cite{Sab}, we first compactify the fibers of $T^* \bC^n$ by taking their projective closures, 
i.e., $T^*\bC^n=\bC^n\times \bC^n\subset \bC^n\times \bC\bP^n$, 
so that we keep track of conic subvarieties contained in the zero section of $T^*\bC^n$, and then we compactify $\bC^n\times \bC\bP^n$ using the trivial projective bundle $\bC^n\times \bC\bP^n\subset \bC\bP^n\times \bC\bP^n$. Other authors, like Aluffi \cite{Al3} or Parusi\'nski-Pragacz \cite{PP}, consider the projective closure $\overline{X} \subset \bC\bP^n$ of $X$, together with its corresponding projective conormal variety $C(\overline{X},\bC\bP^n):=\bP(T^*_{\overline{X}}\bC\bP^n) \subset \bP(T^*\bC\bP^n)$. 

As already indicated in Theorem \ref{Sabbah}, Sabbah's formula \cite[Lemme 1.2.1]{Sab} applied to $X \subset \bC^n$ computes the Chern-Mater class of $X$ in the Borel-Moore homology (or Chow group) of $X$. The same formula applied to $\overline{X} \subset \bC\bP^n$ computes the Chern-Mather class of $\overline{X}$ in the Borel-Moore homology (or Chow group) of $\overline{X}$, and resp., of  $\bC\bP^n$, upon using the proper pushforward. By contrast, 
we relate our compactification of $T^*\bC^n$ in $\bC\bP^n\times \bC\bP^n$ to the twisted logarithmic cotangent bundle $\Omega^1_{\bC\bP^n}(\log H_\infty)(H_{\infty})$ of $(\bC\bP^n, H_\infty)$, and compute the Chern-Mather class of $X$ in $A_*(\bC\bP^n)$ via Theorem~\ref{thm_m} and Ginsburg's microlocal interpretation of Chern classes.
\er

\smallskip


The equality of top degree coefficients in \eqref{id1} reproves the following result of Seade-Tib\u{a}r-Verjovsky \cite[Equation (2)]{STV}, already recalled in Theorem \ref{edf}.
Hence Theorem~\ref{thm_main} can be viewed as a 
higher dimensional generalization of this~result.
\bc\label{cor_linear}
If $X\subset \bC^n$ is a $d$-dimensional irreducible affine variety and $H\subset \bC^n$ is a general affine hyperplane, one has
\be\label{eq_b0}
b_0=(-1)^{d} \cdot \chi(\Eu_{X}|_{\bC^n\setminus H}).
\ee
\ec

Moreover, by plugging $t=-1$ in \eqref{id1}, one gets the following relation between the value of the local Euler obstruction function of an affine cone at the cone point, and the LO bidegrees of the affine cone. 
This formula has already appeared 
in work of L\^e-Teissier \cite{TT}. 

\bc\label{cor_cone}
Let $X$ be an \index{affine cone} affine cone of a projective variety, and denote its cone point by $O$. Then
\be\label{euc}
\Eu_X(O)=b_d(X)-b_{d-1}(X)+\cdots +(-1)^db_0(X),
\ee
with $d=\dim X$.
\ec

\subsection{Sectional linear optimization degrees. Relation to LO bidegrees}
By analogy with the sectional maximum likelihood degrees, sectional linear optimization (LO) degrees of an affine variety were introduced in \cite{MRW6}. We recall their definition and explain how they relate to the LO bidegrees.

\bd[Sectional linear optimization (LO) degrees] Let $X\subset \bC^n$ be a $d$-dimensional irreducible affine variety. For any $0\leq i\leq d$, \index{sectional linear optimization degree} the {\it $i$-th sectional  LO degree} of $X$, denoted by $s_i(X)$ or simply $s_i$, is given by
\be\label{si} s_i(X):= \LOdeg(X\cap H_1\cap \cdots \cap H_i),\ee
where $H_1, \ldots, H_i$ are generic affine hyperplanes. 
\ed
Note that $s_0(X)=\LOdeg(X)$, and $s_d(X)$ is the degree of $X$. Here, for notational convenience, we also set $s_i=0$ for $i>d$. 

Regarding the relation between the LO bidegrees and sectional LO degrees, we show the following result in \cite{MRW6}.
\bt{\rm \cite[Theorem 1.4]{MRW6}}\label{thm_bs}
Let $X\subset \bC^n$ be any irreducible affine variety, and let $b_i$ and $s_i$ be its LO bidegrees and LO sectional degrees, respectively. Then $s_i=b_i$ for all $i$. 
\et

In particular, the above result gives, via~\eqref{eq_b0} and \eqref{si}, a topological interpretation of all LO bidegrees as
Euler characteristics, that is,
\[
b_i(X)=(-1)^{d-i}\chi(Eu_{X\cap H_1\cap \cdots \cap H_i} |_{\bC^n\setminus H_{i+1}}).
\]
Moreover, formula \eqref{euc} can be reformulated as an alternating sum of sectional LO degrees.

\subsection{Relation to polar degrees}
We discuss here the relation between the LO bidegrees of an affine variety and the polar degrees of its projective closure (see \cite[Section 6]{MRW6} for complete details).

Let $X\subset \bC^n$ be a $d$-dimensional irreducible affine variety, and let $\overline{X} \subset \bC\bP^n$ be its projective closure. Recall that the  \index{conormal variety} conormal variety $T^*_X\bC^n$ is the closure in $T^*\bC^n$ of 
\[
T^*_{X_{\reg}} \bC^n=\left\{(\un x, \un u)\in T^*\bC^n=\bC^n_{\un x}\times \bC^n_{\un u}\mid \un x\in X_{\reg} \text{ and } {\un u}|_{T_{\un x} X_{\reg}}=0\right\}.
\]
Here, we view $\un u=(u_1, \ldots, u_n) \in \bC^n_{\un u}$ as the parallel 1-form $\sum_{1\leq i\leq n}u_i dx_i$ on $\bC^n$. So, if $x\in X_{\reg}$, then ${\un u}|_{T_{\un x} X_{\reg}}=0$ means that $\un x$ is a critical point on $X_{\reg}$ of the linear function $\sum_{1\leq i\leq n}u_i x_i$, or, equivalently, a level set of $\sum_{1\leq i\leq n}u_i x_i$ is tangent to $X_{\reg}$ at $\un x$. 
Let $\bP(T^*_X \bC^n)\subset \bC^n_{\un x} \times \bC\bP^{n-1}$ be the fiberwise projectivitation of $T^*_X \bC^n$, with closure  $\overline{\bP(T^*_X\bC^n)} \subset \bC\bP^n\times \bC\bP^{n-1}$.

On the other hand, cf. \cite{RS2013} (see also \cite{Al3}), the  \index{projective conormal variety} {\it projective conormal variety} $\bP(C_{\overline{X}}\bC\bP^n)$ can be identified with the $(n-1)$-dimensional subvariety $N_{\overline{X}}$ of $\bC\bP^n\times (\bC\bP^n)^\vee$ defined by the closure of 
\[
N_{\overline{X}_{\reg}}=\left\{(\un p, H)\in \bC\bP^n\times (\bC\bP^n)^\vee\mid \un p\in \overline{X}_{\reg} \text{ and } H \text{ is tangent to $\overline{X}_{\reg}$ at }\un p\right\},
\]
where the dual projective space $(\bC\bP^n)^\vee$ parametrizes hyperplanes in $\bC\bP^n$. 

Let $H_\infty\in (\bC\bP^n)^\vee$ denote the hyperplane at infinity in $\bC\bP^n$, and let $\pi_\infty: (\bC\bP^n)^\vee \dashrightarrow \bC\bP^{n-1}$ be the rational map given by  projecting from $H_\infty$. We then have the following.

\bl{\rm \cite[Proposition 6.1]{MRW6}}
Assume that $X$ is not contained in any proper affine subspace, that is, $\overline{X}$ is not contained in a hyperplane. Under the above notations, the rational map $$\id\times \pi_\infty: \bC\bP^n\times (\bC\bP^{n})^\vee\dashrightarrow \bC\bP^n\times \bC\bP^{n-1}$$ restricts to a birational map between $N_{\overline{X}}$ and $\overline{\bP(T^*_X\bC^n)}$. 
\el

The above Lemma allows us to relate the LO bidegrees of $X$ and the polar degrees of $\overline{X}$. Recall that the LO bidegrees $b_i(X)$ (or simply $b_i$) are the bidegrees of the closure of the {affine} conormal variety $T^*_X\bC^n$ in $\bC\bP^n\times \bC\bP^n$, i.e., they are defined by the following formula
\[
[\overline{T^*_X\bC^n}]=b_0 [\bC\bP^0\times \bC\bP^n]+b_1[\bC\bP^{1}\times \bC\bP^{n-1}]+\cdots +b_d[\bC\bP^{d}\times \bC\bP^{n-d}]\in A_*(\bC\bP^n\times \bC\bP^n),
\]
where $d=\dim X$.
Similarly, the \index{polar degree} {\it polar degrees} $\delta_i(\overline{X})$ (or simply $\delta_i$) of $\overline{X}$ are the bidegrees of the projective conormal variety $N_{\overline{X}}\subset \bC\bP^n\times (\bC\bP^n)^\vee$. More precisely, they are defined by (see, e.g., \cite[Section 2]{Sturmfels})
\[
[N_{\overline{X}}]=\delta_1 [\bC\bP^{0}\times \bC\bP^{n-1}]+\cdots +\delta_{d+1}[\bC\bP^{d}\times \bC\bP^{n-d-1}].
\]
Polar degrees have been used in \cite[Theorem 5.4, Theorem 6.11]{DHOST} to bound (or to compute under certain transversality assumptions) both the ED degree of $X$ and the projective ED degree of $\overline X$.

The following result was proved in \cite{MRW6}.
\bp{\rm \cite[Proposition 6.2]{MRW6}}\label{prop_iff}
The bidegrees of $T^*_X\bC^n\subset \bC^n_{\un x}\times \bC^n_{\un u}$ and the bidegrees of $N_{\overline{X}}\subset \bC\bP^n\times (\bC\bP^n)^\vee$ coincide in the sense that
\be\label{eq_delta}
b_i(X)=\delta_{i+1}(\overline{X}), \quad\text{for } \ 0\leq i\leq d,
\ee
if and only if the hyperplane at infinity $H_\infty$ is not a point in the dual variety $\overline{X}^\vee\subset (\bC\bP^n)^\vee$.
\ep

Combining Theorem \ref{thm_bs} and Proposition \ref{prop_iff}, one gets the following generalization of \cite[Theorem 13]{wasserstein} (see also \cite[Proposition 2.9]{Sturmfels}) to singular varieties. 
\bc\label{cor_si} Let $X\subset \bC^n$ be an affine variety, with projective closure 
$\overline{X} \subset \bC\bP^n$. 
Assume that the hyperplane at infinity $H_\infty$ is not contained in $\overline{X}^\vee$. Then the sectional LO degrees of $X$ coincide with the polar degrees of $\overline{X}$, that is, $s_i(X)=\delta_{i+1}(\overline{X})$ for all $0\leq i\leq \dim X$. 
\ec
\br
If the affine variety $X\subset \bC^n$ is defined by homogeneous polynomials, i.e., $X$ is the cone of a projective variety, then its closure intersects the hyperplane at infinity $H_\infty$ transversally. In this case, $H_\infty$ is not contained in $\overline{X}^\vee$, hence $\delta_{i+1}(\overline{X})=s_i(X)=b_i(X)$, for all  $0\leq i\leq \dim X$.
For example, if $X\subset \bC^9$ 
is defined by the vanishing of the determinant of the matrix 
$\begin{bsmallmatrix} 
x_0&x_1&x_2\\
x_3&x_4&x_5\\
x_6&x_7&x_8\\
\end{bsmallmatrix}$, 
then the 
LO bidegrees of $X$ 
and the polar degrees of $\overline{X}$ are given by
\begin{align*}
[\overline{T^*_X\bC^9}]&=
6 [\bP^0\times \bP^9]+   12 [\bP^1\times \bP^8]+   12 [\bP^2\times \bP^7]+   6 [\bP^3\times \bP^6]+   3 [\bP^4\times \bP^5],\\
[N_{\overline{X}}] & =
6 [\bP^0\times \bP^8]+   12 [\bP^1\times \bP^7]+   12 [\bP^2\times \bP^6]+   6 [\bP^3\times \bP^5]+   3 [\bP^4\times \bP^4].
\end{align*}
\er

The following example shows that when $H_\infty\in \overline{X}^\vee$, the two sets of bidegrees considered above are different. 

\bex
Let $X$ in $\bC^3$ be the smooth curve $V(x^2+y^2+z^2-1,y-x^2)$. 
Its projective closure $\overline X=V(x^2+y^2+z^2-w^2,yw-x^2)$ is smooth, while 
the dual variety $\overline X^\vee$ is a singular hypersurface defined by an octic  polynomial with $49$ terms.
The first terms of this octic in the dual coordinates are
\[
16 (\dot y^{2}+\dot z^{2})\dot w^6
- 8 \dot y(\dot x^{2}+4\,\dot y^{2}+4\,\dot z^{2})
\dot w^5+\dots .
\]
Since the octic vanishes at the point $[\dot x: \dot y: \dot z: \dot w] =[ 0:0:0:1]$,
$H_\infty$ is in the dual variety $\overline X^\vee$.
Hence,  the LO degrees and $X$ and polar degrees of $\overline X$ do not coincide. 
Indeed, as computed with \index{Macaulay2} \texttt{Macaulay2} \cite{GS-M2}, the LO bidegrees of $X$ 
and the polar degrees of $\overline{X}$ are given by
\begin{align*}
[\overline{T^*_X\bC^3}]&=6 [\bP^0\times \bP^3]+4[\bP^{1}\times \bP^{2}],\\
[N_{\overline{X}}] & =8 [\bP^{0}\times \bP^{2}] +4[\bP^{1}\times \bP^{1}].
\end{align*}
\eex

\section{Non-generic Data. Morsification and applications}\label{nong}
Typically, results on Euclidean distance degrees and nearest point problems have a hypothesis requiring genericity of the data point $\un u$, or one studies ED-discriminant loci (roughly speaking, the collection of data points $\un u$ for which the function $d_{\un u}$ has a different number of critical points than the ED degree), e.g., see \cite{DHOST}.  There are many practical situations when data is not generic, e.g., when the data is sparse. 

Working with non-generic data for $X \subset \bC^n$ can lead to the distance function having an infinite number of critical points or even a positive dimensional critical set. For instance, every point on the circle is a critical point of the distance function when the data is taken to be the center of the circle. 


\subsection{Morsification}
Results of \cite{MRW4} allow one to handle situations when the data belongs to the ED-discriminant (i.e., data is not generic) by observing the ``limiting'' behavior of critical sets obtained for generic choices of data. 
Specifically, by adding some {\it noise} ${\un \epsilon} \in \C^n$ to an arbitrary data point $\un u$, one is back in the generic situation, and the limiting behavior of critical points of $d_{{\un u}+t {\un \epsilon}}$ on $X_{\rm reg}$ for $t \in \bC^*$ (with $\vert t \vert$ very small), as $t$ approaches the origin of $\bC$, yields valuable information about the initial nearest point problem. 
Notice that one can write in this case $$d_{{\un u}+t {\un \epsilon}}(\un x)=d_{\un u}(\un x)- t\ell(\un x) + c,$$
with $\ell(\un x)=2 \sum_{i=1}^n  \epsilon_i  x_i$ and $c$ is a constant with respect to $\un x$.
So the critical points of $d_{{\un u}+t {\un \epsilon}}$ coincide with those of $d_{\un u}-t\ell$. 
Moreover, since  ${\un \epsilon}$ is generic, $\ell$ is a generic linear function.

The observation of the previous paragraph places us at the origins of the following Morsification procedure. \index{morsification}
Let $f\colon \C^n \to \C$ be a polynomial function, and let $\ell\colon \bC^n \to \bC$ be a linear function. Let $X \subset \C^n$ be a possibly singular closed irreducible subvariety such that $f$ is not constant on $X$, and restrict $f$ and $\ell$ to $X$. If the linear function $\ell$ is general enough, then the function $f_t:=f-t\ell$ is a holomorphic Morse function on $X_{\rm reg}$ (i.e., it has only non-degenerate isolated critical points) for all but finitely many $t \in \C$. Motivated by the above NPP, one is then  interested in studying the limiting behavior of the set of critical points of $f_t\vert_{X_{\reg}}$ as $t$ approaches $0 \in \bC$.

If $X=\C^n$ and $f\colon \C^n \to \C$ is a polynomial function with only isolated singularities $P_1,\ldots,P_r$, a solution to the above problem is provided by the classical Morsification picture, as shown by Brieskorn in  \cite[Appendix]{Br}. More precisely, if $\mu_i$ is the {Milnor number} of $f$ at $P_i$ (cf. \cite{Mil68}), then in a small neighborhood of $P_i$ the function $f_t$ has $\mu_i$ Morse critical points which, as $t$ approaches $0$, collide together at $P_i$.

In the general situation, for $X$ and $f$ with arbitrary singularities, this limiting behavior of the set of critical points of $f_t\vert_{X_{\reg}}$ can be studied by using constructible functions and vanishing cycle techniques, together with work of Ginsburg \cite{Gin} on characteristic cycles. 
Let $\Eu_X$ be the local Euler obstruction function on $X$, regarded as a constructible function on $\bC^n$ by extension by zero. Then $\Eu_X$ is constructible on $\bC^n$ for any Whitney stratification of $X$, to which one adds the open stratum $\bC^n \setminus X$.  We endow $X$ with a Whitney stratification $\mathscr X$ with finitely many strata, with respect to which the stratified singular set of $f$ is defined as $\Sing_{\mathscr X}f:=\bigcup_{V\in \mathscr X} \Sing (f\vert_{V})$. The set $\Sing_{\mathscr X}f$ is then a closed set in $X$ distributed in a finite number of critical fibers of $f$. If $c \in \bC$ is a critical value of $f\colon X \to \bC$ and $\varphi_{f-c}$ denotes the corresponding vanishing cycle functor of constructible functions, then $\varphi_{f-c}(\Eu_X)$ is a constructible function supported on $\Sigma_c:=\{f=c\} \cap \Sing_\mathscr{X} f$. Each $\Sigma_c$ gets an induced Whitney stratification witnessing the constructibility of $\varphi_{f-c}(\Eu_X)$. One can thus refine $\mathscr X$ to a Whitney stratification $\mathscr S$ of $X$ adapted to the functions $\varphi_{f-c}(\Eu_X)$, for all critical values $c$ of $f$, and we may use the notation  $\Sing_{\mathscr S}f$ instead of $\Sing_{\mathscr X}f$ whenever strata in each $\Sigma_c$ need to be taken into account. (Note that $\Sing_{\mathscr S}f =\Sing_{\mathscr X}f$ as sets.) Using the distinguished basis of constructible functions consisting of local Euler obstructions of closures of strata, one can then introduce uniquely determined integers $n_V$ for each stratum $V \subset \Sing_\mathscr{S} f$ so that:
\be\label{fcfi}
\sum_{c \in \bC} \varphi_{f-c}(\Eu_X)=\sum_{V \subset \Sing_\mathscr{S} f} (-1)^{\codim V-1}\cdot n_V \cdot \Eu_{\overline{V}}.
\ee
It can be shown that $n_V \geq 0$, for any $V \subset \Sing_\mathscr{S} f$ (see \cite{MRW4} and the references therein).

We next recall the definition of the limit of a family of sets of points (cf. \cite{MRW4}).
\bd \index{limit of sets}
(i) A {\it set of points} 
is a finite set endowed with a multiplicity function, i.e., after fixing a ground set $S$, a set of points $\cM$ of $S$ is given by a function $\cM: S\to \mathbb{Z}_{\geq 0}$ such that $\cM(x)=0$ for all but finitely many $x\in S$. The value $\cM(x)$ is called the {\it multiplicity} of $\cM$ at $x$. \index{multiplicity} 

(ii) Let $\cM_t$ be a family of sets of points on a ground set $S$, parametrized by $t\in D^*$, with $D^*$ a punctured disc centered at the origin. The {\it limit} $\lim_{t\to 0}\cM_t$ of $\cM_t$ as $t\to 0$ is defined as the set of points given by:
\[
(  \lim_{t\to 0}\cM_t  )  (x)\coloneqq \varprojlim_{U}  \lim_{t\to 0}\sum_{y\in U}\cM_t(y),
\]
where $\varprojlim_U$ denotes the inverse limit over all open neighborhood of $x$. 
\ed

The main result of \cite{MRW4} can now be stated as follows.
\bt{\rm \cite[Theorem 1.3]{MRW4}} \label{thm_noniso}\ 
In the above notations, we have
\begin{equation}\label{eq_maini}
\lim_{t\to 0}\Sing(f_t|_{X_{\reg}})=\sum_{V \subset \Sing_\mathscr{S} f}n_{V}\cdot\Sing(\ell|_{V}).
\end{equation}
\et

The left hand side of \eqref{eq_maini} does not take into account the points of $\Sing(f_t|_{X_{\reg}})$ which ``escape at infinity'' as $t \to 0$, i.e., the singular points of $f$ on $X_{\reg}$ which are outside a sufficiently large ball centered at the origin for sufficiently small $t$.

An easy consequence of Theorem \ref{thm_noniso} is the following. 
\begin{corollary}
Let $X\subset \bC^n$ be an irreducible affine variety and let $f\colon \C^n\to \C$ be a polynomial function. Assume that $Z$ is an irreducible component of $\Sing(f|_{X_{\reg}})$ and denote its closure in $\C^n$ by $\overline{Z}$. If $\LOdeg(\overline{Z})>0$, then for a general linear function $\ell\colon \C^n\to \C$, there exists a point $P$ in $\lim_{t\to 0}\Sing(f_t|_{X_{\reg}})$ which is contained in $Z$, but not contained in any other irreducible component of $\Sing(f|_{X_{\reg}})$. 
\end{corollary}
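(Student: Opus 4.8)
The plan is to deduce this from Theorem~\ref{thm_noniso} by pinning down the coefficient of the stratum of $\mathscr{S}$ sitting over $Z$, and then running a genericity argument on $\ell$. For the setup: since $Z$ is an irreducible component of $\Sing(f|_{X_{\reg}})\subseteq X_{\reg}$, it is locally closed in $\bC^n$, contained in $X_{\reg}$, irreducible of some dimension $e$, and $f$ is constant on it, say $f\equiv c_0$ there. Because $\mathscr{S}$ is a Whitney stratification of $X$ adapted to $\Sing_{\mathscr{S}}f$ and to the functions $\varphi_{f-c}(\Eu_X)$, the dense part of $Z$ is a single stratum $V_0\subseteq X_{\reg}$ of dimension $e$, with $\overline{V_0}=\overline{Z}$, $V_0\subseteq\overline{Z}_{\reg}$, and $V_0\subseteq\Sing_{\mathscr{S}}f$.

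The crucial step is to show $n_{V_0}>0$ in \eqref{fcfi}. I would evaluate \eqref{fcfi} at a general point $q\in V_0$. Since $\overline{V_0}=\overline{Z}\subseteq\{f=c_0\}$ is not contained in $\{f=c\}$ for $c\neq c_0$, the class $\Eu_{\overline{V_0}}$ occurs only in the expansion of $\varphi_{f-c_0}(\Eu_X)$; and since $Z$ is a full component of $\Sing(f|_{X_{\reg}})$, no stratum $V'\neq V_0$ occurring with nonzero coefficient has $V_0\subseteq\overline{V'}$. Hence the right-hand side of \eqref{fcfi} at the general $q$ reduces to $(-1)^{\codim V_0-1}\,n_{V_0}$, using $\Eu_{\overline{V_0}}(q)=1$. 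On the left-hand side only the $c=c_0$ term is nonzero at $q$, and $\varphi_{f-c_0}(\Eu_X)(q)=\chi(\widetilde{H}^*(F_q;\bQ))$, the reduced Euler characteristic of the Milnor fiber of $f$ on $X$ at $q$; by the normal-slice property of the vanishing cycle functor this equals the same invariant of the hypersurface singularity cut out by $f$ on a general transverse slice $N_q$ through $q$. Since $X$ is smooth near $q$ and $\Sing(f|_{N_q})=Z\cap N_q=\{q\}$, this is an isolated hypersurface singularity with Milnor number $\mu\geq 1$, so the left-hand side equals $(-1)^{\codim V_0-1}\mu$. Comparing, $n_{V_0}=\mu\geq 1$.

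Next, since $\LOdeg(\overline{Z})=\LOdeg(\overline{V_0})>0$, the projection $\pi_2\colon T^*_{\overline{Z}}\bC^n\to\bC^n$ to the cotangent fiber is dominant and generically finite of degree $\LOdeg(\overline{Z})$. As $T^*_{V_0}\bC^n$ is dense open in the irreducible variety $T^*_{\overline{Z}}\bC^n$, for general $\ell$ all $\LOdeg(\overline{Z})\geq 1$ points of $\pi_2^{-1}(\ell)$ lie over $V_0$, so $\Sing(\ell|_{V_0})\neq\emptyset$. Moreover, for each of the finitely many other components $Z'$ of $\Sing(f|_{X_{\reg}})$, the intersection $V_0\cap Z'$ is a proper closed subvariety of $V_0$; hence $T^*_{\overline{Z}}\bC^n$ restricted over its closure has dimension $<n$, so its image under $\pi_2$ is a proper closed subset of $\bC^n$. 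Choosing $\ell$ general outside these finitely many loci (and general enough that $f-t\ell$ is Morse on $X_{\reg}$ for all but finitely many $t$) guarantees that no critical point of $\ell|_{V_0}$ lies on any $Z'$.

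Finally, I would fix such an $\ell$ and take $P$ to be a critical point of $\ell|_{V_0}$; then $P\in V_0\subseteq Z$ and $P\notin Z'$ for every other component $Z'$ of $\Sing(f|_{X_{\reg}})$. By Theorem~\ref{thm_noniso}, $\lim_{t\to 0}\Sing(f_t|_{X_{\reg}})=\sum_{V\subset\Sing_{\mathscr{S}}f}n_V\cdot\Sing(\ell|_V)$, a sum of sets of points with non-negative multiplicities (each $n_V\geq 0$); the $V_0$-summand already contributes multiplicity at least $n_{V_0}\geq 1$ at $P$, so $P$ lies in the limit, which is the point we want. The hard part is the positivity $n_{V_0}>0$: one must check that the bookkeeping in \eqref{fcfi} over the refined stratification $\mathscr{S}$ extracts precisely the transverse Milnor number of $f$ along $Z$ rather than a smaller (possibly zero) combination, which rests on the isolatedness of the transverse singularity at a generic point of a full component of $\Sing(f|_{X_{\reg}})$; the remaining steps are routine dimension counts.
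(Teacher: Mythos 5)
Your proof is correct and follows essentially the same approach as the paper: identify the stratum $V_0\subset\Sing_{\mathscr S}f$ containing a dense open subset of $Z$, show $n_{V_0}>0$ by reducing to the transversal isolated hypersurface singularity via a normal slice (whose Milnor number is positive), and then use $\LOdeg(\overline Z)>0$ together with Theorem~\ref{thm_noniso} to produce the desired point. The one place you are more explicit than the paper is the genericity argument for $\ell$ guaranteeing that the critical points of $\ell|_{V_0}$ avoid the other components $Z'$; the paper leaves this to the reader, and your dimension-count via the conormal projection $\pi_2$ is a correct way to fill it in.
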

\begin{proof}
Under the notations of Theorem \ref{thm_noniso}, as $Z$ is an irreducible component of $\Sing(f|_{X_{\reg}})$, there must be a stratum $V \subset \Sing_\mathscr{S} f$ such that $V$ contains a nonempty Zariski open subset of $Z$. It suffices to show that the corresponding $n_V$ is positive. Using a transversal slice, this statement can be reduced to the case when $Z=V$ is an isolated point (e.g., see \cite[Proposition 10.4.9 (3)]{MS}). In this case, it follows from the fact that the Milnor number of an isolated hypersurface singularity is always positive (e.g., see \cite[Example 10.4.17 and Example 10.3.58]{MS}). 
\end{proof}

\smallskip

Let us now get back to the calculation of the ED degree of the affine variety $X \subset \bC^n$. In this case, $f=d_u$ is the squared Euclidean distance function, but we allow $u \in \bC^n$ to be {\it arbitrary} (e.g., contained in the ED-discriminant). For $\ell$ a general linear function, using the graph embedding and Theorem \ref{edf}, we first get that 
\be {\rm EDdeg}(X) = \# \Sing(f_t|_{X_{\reg}}),\ee
with $\#$ denoting the cardinality of a set.
Hence, if no points of $\Sing(f_t|_{X_{\reg}})$ go to infinity as $t \to 0\in \bC$, a formula for ${\rm EDdeg}(X)$ can be deduced from Theorem \ref{thm_noniso} in  terms of the multiplicities $n_V$ as (cf. \cite[Corollary 1.9]{MRW4}):
\be\label{edi} \index{Euclidean distance degree}
{\rm EDdeg}(X)=\sum_{V \subset \Sing_\mathscr{S} f}n_{V}\cdot \#\Sing(\ell|_{V}).
\ee

\br
Let us note that if $f$ has only isolated stratified singularities on $X$, this approach is closely related to {\it homotopy continuation}, e.g., see \cite{AG}. However, we also consider here the more general situation when $f$ is allowed to have a positive-dimensional stratified singular locus, in which case 
the limit as $t\to 0\in \bC$ only picks up a finite set of points in the critical locus of $f$, these are among the stratified critical points of a general linear function $\ell$. 
\er

\subsection{Computing multiplicities} 

Formulas \eqref{eq_maini} and  \eqref{edi} emphasize the need for computability of the multiplicities $n_V$, which measure the asymptotics of singularities in a Morse perturbation of $f$. 

Consider the following simple case.
\bex\label{str_iso} 
	Let $X\subset \bC^n$ be an arbitrary complex affine variety, and assume that the polynomial function $f\colon X\to \bC$ is nonconstant and has only isolated stratified critical points $P_1, \ldots, P_r$. Then formula \eqref{fcfi} becomes:
	\be\label{unu}\sum_{c \in \bC} \varphi_{f-c}(\Eu_X)=(-1)^{\dim X-1} \sum_{i=1}^r n_{P_i}\cdot \Eu_{P_i},\ee
	with $n_{P_i}$ given by
	\be\label{niis} n_{P_i}=(-1)^{\dim X-1}\varphi_{f-f(P_i)}(\Eu_X)(P_i) 
	= :(-1)^{\dim X} \Eu_{f-f(P_i)} (X,P_i).
	\ee
The last term in \eqref{niis} is the \index{relative Euler obstruction} {\it relative Euler obstruction} of the function $f-f(P_i)$ on $X$ at $P_i$ (as introduced in \cite{BMPS}).
	Theorem \ref{thm_main} specializes in this case to  
	\be\label{sta}
\lim_{t\to 0}\Sing(f_t|_{X_{\reg}})=\sum_{i=1}^r n_{P_i} \cdot P_i ,
\ee
with $n_{P_i}$ computed as in formula \eqref{niis}.
If $X$ is smooth, then $\Eu_X=1_X$, and formula \eqref{niis} yields that $n_{P_i}$ equals the Milnor number $\mu_{P_i}$ of $f$ at $P_i$, as predicted by the classical Morsification picture \cite{Br}. 
\eex

An explicit calculation of the local multiplicities $n_V$ of formula \eqref{eq_maini} in terms of the geometry and topology of the pair $(X,f)$ is difficult in general.  In \cite{MT}, the first author and Tib\u{a}r introduced the integers
$$\mu_V=\varphi_{f-c}(\Eu_X)(V),$$
i.e., the values of the constructible function $\varphi_{f-c}(\Eu_X)$ along critical strata $V \subset \Sing_\mathscr{S} f$ of $f$, to produce the following  formula for the multiplicities $n_V$ (generalizing Example \ref{str_iso}): 
\bt{\rm \cite[Theorem 1.1]{MT}}\label{nvgeni}
Let $X \subset \bC^n$ be a  complex affine variety, and $f\colon X \to \bC$ the restriction to $X$ of a polynomial function. Then, for any critical value $c$ of $f$, the \index{multiplicity}  multiplicities $n_V$ for singular strata $V \subset f^{-1}(c)$ are given by:
\begin{equation}\label{nvgi}
n_V=(-1)^{\codim V -1} \{ \mu_V - \sum_{ \{S \mid V \subset \overline{S} \setminus S \} } \chi_c(\lk_{\overline S}(V)) \cdot \mu_S \},
\end{equation}
where:
\begin{enumerate}
\item[(i)] the summation is over singular strata $S$ in $f^{-1}(c)$, different from $V$, which contain $V$ in their closure.
\item[(ii)] $\chi_c(\lk_{\overline S}(V))$ is the compactly supported Euler characteristic of the complex link of $V$ in $\bar S$, for a pair of singular strata $(V,S)$ in $f^{-1}(c)$, with $V \subset \overline{S}\setminus S$.
\end{enumerate} 
\et

Formula \eqref{nvgi} is a direct application of Ginsburg's formula for the characteristic cycle of a bounded constructible complex (or constructible function), see, e.g., \cite[Sect.8.2]{Gin}.
It becomes quite explicit if  $X$ is smooth, since in this case 
$\mu_V=\chi(\widetilde{H}^*(F_V;\bC))$ 
is just  the Euler characteristic of the reduced cohomology of the Milnor fiber $F_V$ of the hypersurface $\{f=c\}$ at some point in $V$.

The integers $\mu_V$ appearing in \eqref{nvgi} can also be used to give a new interpretation for the number of Morse critical points on the regular part of $X$ in a Morsification of $f$. More precisely, 
one has the following result from \cite{MT}.
\bt{\rm \cite[Theorem 1.2]{MT}}\label{morsenumberi}
The number of Morse critical points on $X_{\reg}$ in a generic deformation $f_t:=f-t\ell\colon X \to \bC$ of $f$ is given by:
\be\label{mnui}
\# \Sing(f_t|_{X_{\reg}})=m_\infty + (-1)^{\dim X-1}  \sum_{c \in \bC} \left( \sum_{V \subset f^{-1}(c) \cap  \Sing_\mathscr{S} f} \chi(V \setminus V \cap H_t) \cdot \mu_V \right) ,
\ee
where $m_\infty $ is the number of points of $\Sing(f_t|_{X_{\reg}})$ that escape to infinity as $t \to 0$,  the first sum is over the critical values $c$ of $f$, and $H_t:=\ell^{-1}(t)$ is a generic hyperplane.
\et

The above formula is a direct application of \eqref{eq_maini} and Theorem \ref{edf}. When no critical points of $f_t|_{X_{\reg}}$ escape at infinity as $t\to 0$, one also obtains from \eqref{edi} and \eqref{mnui} a new and explicit formula for the ED degree.

\bex
Let $X=\bC^2$, with the stratification consisting of a single stratum. Consider $f(x,y)=x+x^2y$ and $\ell(x,y)=x+y$.  Then $f$ has no singularities in $\bC^2$ (though it has a singularity ``at infinity'', namely $p=[0:1:0]$). On the other hand, $f_t=f-t\ell$ has two Morse singularities. Formula \eqref{mnui} shows that these two Morse points escape to infinity as $t \to 0$ (and in fact it is easy to see that they converge to $p$, asymptotically to the fiber $f^{-1}(0)$).
\eex



\begin{thebibliography}{10}

\bibitem{AdHe}   M. F. Adamer, M.  Helmer,  {\it Complexity of model testing for dynamical systems with toric steady states}, Adv. in Appl. Math. 110 (2019), 42--75.

\bibitem{AGKM}
C.~Am\'{e}ndola, L.~Gustafsson, K.~Kohn, O.~Marigliano, A.~Seigal,
{\it The maximum likelihood degree of linear spaces of symmetric matrices}, 
Matematiche (Catania), 76 (2021), no. 2,  535--557.

\bibitem{AZ}
C.~Am\'{e}ndola, P.~Zwiernik, 
{\it Likelihood geometry of correlation models}, 
Matematiche (Catania), 76 (2021), no. 2, 559--583.


\bibitem{AS} S.  Agarwal, N. Snavely, I. Simon, S. Seitz, R. Szeliski, {\it Building Rome in a day}, Communications of the ACM, 54 (2011), 105--112.

\bibitem{AG} E. L. Allgower, K. Georg, 
{\it Numerical continuation methods. An introduction.} 
Springer Series in Computational Mathematics, 13. Springer-Verlag, Berlin, 1990.



\bibitem{Al1} P. Aluffi, {\it Differential forms with logarithmic poles and Chern-Schwartz-MacPherson classes of singular varieties}, {C. R. Acad. Sci. Paris S\'er. I Math.} {329} (1999), 619--624.

\bibitem{Al2} P. Aluffi, {\it Limits of Chow groups, and a new construction of Chern-Schwartz-MacPherson classes}, {Pure Appl. Math. Q.} {2} (2006), no. 4, Special Issue: In honor of Robert D. MacPherson. Part 2, 915--941.


\bibitem{Al} P. Aluffi, {\it Euler characteristics of general linear sections and polynomial Chern classes}, Rend. Circ. Mat. Palermo 62 (2013), 3--26.

\bibitem{Al3} P. Aluffi, {\it Projective duality and a Chern-Mather involution}, Transactions of the American Mathematical Society, 370 (2018), 1803--1822.

\bibitem{AH} P. Aluffi, C. Harris, {\it The Euclidean  distance  degree  of  smooth  complex  projective  varieties}, Algebra Number Theory {12} (2018), no. 8, 2005--2032.

\bibitem{AMSS} P. Aluffi, L. Mihalcea, J. Sch\"urmann, C.  Su,  {\it Shadows of characteristic cycles, Verma modules, and positivity of Chern-Schwartz-MacPherson classes of Schubert cells}, arXiv:1709.08697.

\bibitem{AHe} B. D. O. Anderson,  U. Helmke,  {\it Counting critical formations on a line}, SIAM J. Control Optim. 52 (2014), 219--242.

\bibitem{BCEMR}
T.~Boege, J.~I. Coons, C.~Eur, A.~Maraj, F.~R\"{o}ttger,
{\it Reciprocal maximum likelihood degrees of {B}rownian motion tree models},
Matematiche (Catania), 76 (2021), no. 2, 383--398.

\bibitem{Bra} 
 J.-P.~Brasselet, {\it Characteristic classes}, in Handbook of geometry and topology of singularities III, 303--417, Springer, Cham, 2022.

\bibitem{BMPS} 
 J.-P.~Brasselet, D.~Massey, A.~J.~Parameswaran, J.~Seade, 
{\it Euler obstruction and defects of functions on singular varieties},  
J. London Math. Soc. (2), 70 (2004), 59--76.

\bibitem{BRST} P. Breiding, F. Rydell, E. Shehu, A. Torres, {\it Line Multiview Varieties}, arXiv:2203.01694.

\bibitem{Br}
E.~Brieskorn, {\it Die {M}onodromie der isolierten {S}ingularit\"{a}ten von {H}yperfl\"{a}chen}, 
Manuscripta Math. 2 (1970) 103--161.


\bibitem{CHKS} F. Catanese, S. Ho{\c s}ten, A. Khetan, B. Sturmfels,  {\it The maximum likelihood degree},  Amer. J. Math. 128 (2006), no. 3, 671--697.

\bibitem{wasserstein}
T.~O. \c{C}elik, A.~Jamneshan, G.~Mont\'{u}far, B.~Sturmfels, L.~Venturello,
{\it Wasserstein distance to independence models}, 
J. Symbolic Comput., 104 (2021), 855--873.

\bibitem{CNAS} M. Compagnoni, R. Notari, F. Antonacci, A. Sarti,  {\it A comprehensive analysis of the geometry of TDOA maps in localization problems}, Inverse Problems 30 (2014), 035004 (49pp).

\bibitem{DHOST} J. Draisma, E. Horobe{\c t}, G. Ottaviani, B. Sturmfels, R. Thomas,  {\it The Euclidean distance degree of an algebraic variety}, Found. Comput. Math. 16 (2016), no. 1, 99--149.


\bibitem{DLL}
S.~Di~Rocco, L. Gustafsson, L.  Schaffler, {\it Gaussian likelihood geometry of projective varieties},
arXiv:2208.12560.

\bibitem{DEW-bottleneck}
S.~Di~Rocco, D.~Eklund, M.~Weinstein,
{\it The bottleneck degree of algebraic varieties}, SIAM J. Appl. Algebra Geom, 4 (2020), no. 1, 227--253.



\bibitem{Di}
A.~Dimca, {\it Sheaves in topology}, Universitext. Springer-Verlag, Berlin, 2004.


\bibitem{DSS} M. Drton, B. Sturmfels, S. Sullivant,  {\it  Lectures on Algebraic Statistics}, Oberwolfach Seminars, Vol 39, Birkh\"auser, Basel, 2009.


\bibitem{DKRS}
S.~Dye, K.~Kohn, F.~Rydell, R.~Sinn, 
{\it Maximum likelihood estimation for nets of conics}, 
Matematiche (Catania), 76 (2021), no. 2, 399--414.

\bibitem{EFSS}
C.~Eur, T.~Fife, J.~A. Samper, T.~Seynnaeve,
{\it Reciprocal maximum likelihood degrees of diagonal linear concentration models}, 
 Matematiche (Catania), 76 (2021), no. 2,  447--459.

\bibitem{E} G. Ewald, {\it Combinatorial convexity and algebraic geometry}, Graduate Texts in Mathematics, 168. Springer-Verlag, New York, 1996.

\bibitem{GGR} T. Gaffney, N. Grulha, M. Ruas, {\it The local Euler obstruction and topology of the stabilization of associated determinantal varieties}, Math. Z. 291 (2019), no. 3-4, 905--930. 

\bibitem{Gin} V. Ginsburg, {\it Characteristic varieties and vanishing cycles}, Invent. Math. 84 (1986), 327--402.

\bibitem{GHRS} E. Gross, H. A. Harrington, Z. Rosen, B. Sturmfels,  {\it Algebraic systems biology: a case study for the Wnt pathway}, Bull. Math. Biol., 78 (2016), no.1, 21--51.      
 
 \bibitem{GS-M2} D. Grayson, M. Stillman, {\it Macaulay 2}, available at
http://www.math.uiuc.edu/Macaulay2.
 
\bibitem{HL} C. Harris, D. Lowengrub,  {\it The Chern-Mather class of the multiview variety}, Comm. Algebra 46 (2018), no. 6, 2488--2499.

\bibitem{HC} D.~L.~Hartl, A.~G.~Clark, {\it Principles of population genetics}, Sinauer Associates, Inc. Publishers, Sunderland, Massachusetts (Fourth Edition).

\bibitem{HZ} R. Hartley, A. Zisserman, {\it Multiple view geometry in computer vision}, Second edition. With a foreword by Olivier Faugeras. Cambridge University Press, Cambridge, 2003.

\bibitem{Hau}  J. D. Hauenstein,  {\it Numerically computing real points on algebraic sets},  Acta Appl. Math., 125 (2013), 105--119.

\bibitem{HRS} D. Hauenstein, J. I. Rodriguez, B. Sturmfels, {\it Maximum  likelihood  for  matrices  with  rank constraints}, J. Algebr. Stat. 5 (2014), 18--38.

\bibitem{HN} M. Helmer, V. Nanda, {\it Conormal Spaces and Whitney Stratifications}, Foundations of Computational Mathematics (2022).


\bibitem{HM} M. Helmer, B. Sturmfels, 
{\it Nearest points on toric varieties},  Math. Scand. 122 (2018), no. 2, 213--238. 


\bibitem{Horobet-data-loci}
E.~Horobe\c{t},
{\it The data singular and the data isotropic loci for affine cones}, 
Comm. Algebra, 45 (2017), no. 3, 1177--1186.

\bibitem{HR-data-loci}
E.~Horobe\c{t}, J.~I. Rodriguez,
{\it Data loci in algebraic optimization}, 
J. Pure Appl. Algebra, 226 (2022), no. 12, Paper No. 107144, 15 pp.

\bibitem{HW} E. Horobe{\c t}, M. Weinstein,  {\it Offset hypersurfaces and persistent homology of algebraic varieties}, 
Comput. Aided Geom. Design 74 (2019), 101767, 14 pp.

\bibitem{Hu2}  J. Huh,  {\it The maximum likelihood degree of a very affine variety}, Compos. Math. {149} (2013), no. 8, 1245--1266.

\bibitem{HS} J. Huh, B. Sturmfels,  {\it Likelihood geometry}, in Combinatorial algebraic geometry, 63--117, Lecture Notes in Math., {2108}, Fond. CIME/CIME Found. Subser., Springer, Cham, 2014.


\bibitem{JKW}
Y.~Jiang, K.~Kohn,  R.~Winter,
{\it Linear spaces of symmetric matrices with non-maximal maximum likelihood degree},
Matematiche (Catania), 76 (2021), no. 2, 461--481.


\bibitem{KOL} K. Kubjas, O. Kuznetsova, L. Sodomaco, {\it Algebraic degree of optimization over a variety with an application to p-norm distance degree}, arXiv:2105.07785.


\bibitem{KK} N.~Kowsuwan,  P.~Kanongchaiyos, {\it 3d  cloud  animation  using  ca  based  method}, in 2009  International  Symposium  on  Intelligent  Signal  Processing  and  Communication  Systems (ISPACS), pages 387--392. IEEE, 2009. 

\bibitem{LNRW2023} J. Lindberg, N. Nicholson, J.I. Rodriguez, Z. Wang,  {\it The Maximum Likelihood Degree of Sparse Polynomial Systems}, SIAM J. Appl. Algebra Geometry 7 (2023), no. 1, 159--171.


\bibitem{MP} R. MacPherson, {\it Chern classes for singular algebraic varieties}, Ann. of Math.  {100}  (1974), 423--432.

\bibitem{Mil68} J. Milnor, {\it Singular points of complex hypersurfaces}, Annals of Mathematics Studies, No. 61, Princeton University Press, Princeton, N.J.; University of Tokyo Press, Tokyo, 1968.

\bibitem{MR} A. Mart\'in del Campo, J. I.  Rodriguez,  {\it Critical points via monodromy and local methods}, J. Symbolic Comput., 79 (2017), 559--574.

\bibitem{Mat}
J.~Mather, {\it Notes on topological stability}, Bull. Amer. Math. Soc. (N.S.), 49 (2012), 475--506.

\bibitem{MR14} A.~Milella, G.~Reina, {\it 3d reconstruction and classification of natural environments by an autonomous vehicle using multi-baseline stereo}, Intelligent Service Robotics, 7 (2014), 79--92.

\bibitem{Max} L. Maxim,  {\it Intersection Homology \& Perverse Sheaves, with Applications to Singularities}, Graduate Texts in Mathematics, Vol. 281, Springer, 2019.

\bibitem{MRW} L. Maxim, J. I. Rodriguez, B. Wang, {\it Euclidean distance degree of the multiview variety}, SIAM J. Appl. Algebra Geometry 4 (2020), no. 1, 28--48.

\bibitem{MRW2} L. Maxim, J. I. Rodriguez, B. Wang, {\it Euclidean distance degree of projective varieties}, Int. Math. Res. Not. IMRN 2021, no. 20, 15788–15802.
 
\bibitem{MRW3} L. Maxim, J. I. Rodriguez, B. Wang, {\it Defect of Euclidean distance degree}, 
 Adv. in Appl. Math. 121 (2020), 102101, 22 pp.

\bibitem{MRW4} L. Maxim, J. I. Rodriguez, B. Wang, {\it A Morse theoretic approach to non-isolated singularities and applications to optimization},  J. Pure Appl. Algebra 226 (2022), no. 3, Paper No. 106865, 23 pp.

\bibitem{MRW5} L. Maxim, J. I. Rodriguez, B. Wang, L. Wu, {\it Logarithmic cotangent bundles, Chern-Mather classes, and the Huh-Sturmfels Involution conjecture}, Comm. Pure Appl. Math. (accepted); arXiv:2202.00554.

\bibitem{MRW6} L. Maxim, J. I. Rodriguez, B. Wang,  L. Wu, {\it Linear optimization on varieties and Chern-Mather classes}, 
arXiv:2208.09073.

\bibitem{MS} L. Maxim, J. Sch\"urmann, {\it Constructible sheaf complexes in complex geometry and Applications}, in 
Handbook of Geometry and Topology of Singularities, vol. III, pages 679--791, Springer, Cham, 2022.

\bibitem{MT} L. Maxim, M. Tib\u{a}r, {\it Euclidean distance degree and limit points in a Morsification}, 
arXiv:2210.06022.

\bibitem{Oh} T. Ohmoto, {\it An elementary remark on the integral with respect to Euler characteristics of projective hyperplane sections}, Rep. Fac. Sci. Kagoshima Univ. No. 36 (2003), 37--41.

\bibitem{OSS} G. Ottaviani, P.-J. Spaenlehauer, B. Sturmfels,  {\it Exact solutions in structured low-rank approximation}, SIAM J. Matrix Anal. Appl., 35 (4):1521--1542, 2014.

\bibitem{PS}  R. S. Palais, S. Smale, {\it A generalized Morse theory}, Bull. Amer. Math. Soc. 70 (1964), 165--172.

\bibitem{PP} A. Parusinski, P. Pragacz, {\it A formula for the Euler characteristic of singular hypersurfaces}, J. Algebraic Geom., 4 (1995), 337--351.

\bibitem{RT-discriminant}
J.~I. Rodriguez, X.~Tang, 
{\it Data-discriminants of likelihood equations}, 
in  I{SSAC}'15---{P}roceedings of the 2015 {ACM} {I}nternational
  {S}ymposium on {S}ymbolic and {A}lgebraic {C}omputation, pages 307--314.
  ACM, New York, 2015.

\bibitem{RT-discriminant-2}
J.~I. Rodriguez, X.~Tang,
{\it A probabilistic algorithm for computing data-discriminants of likelihood equations}, 
J. Symbolic Comput., 83 (2017), :342--364.

\bibitem{RW}  J. I. Rodriguez, B. Wang, {\it The  maximum  likelihood  degree  of  mixtures  of  independence  models}, SIAM J. Appl. Algebra Geom. 1 (2017), no. 1, 484--506.

\bibitem{RW18}  J. I. Rodriguez, B. Wang, {\it 
Computing Euler obstruction functions using maximum likelihood degrees}, 
Int. Math. Res. Not. IMRN 2020, no. 20, 6699--6712. 

\bibitem{RS2013}
P.~Rostalski, B.~Sturmfels, {\it Dualities}, in {Semidefinite optimization and convex algebraic geometry}, {MOS-SIAM Ser. Optim.} vol. 13, pag. 203--249. 

\bibitem{RST} F. Rydell, E. Shehu, A. Torres, {\it Theoretical and Numerical Analysis of 3D Reconstruction Using Point and Line Incidences}, arXiv:2303.13593.

\bibitem{Sab} C. Sabbah, {\it Quelques remarques sur la g\'eom\'etrie des espaces conormaux}, in Differential systems and singularities (Luminy, 1983), 
Ast\'erisque No. 130 (1985), 161--192.

\bibitem{Sch}
J.~Sch\"{u}rmann, {\it Topology of singular spaces and constructible sheaves},
 Instytut Matematyczny Polskiej Akademii Nauk. Monografie
  Matematyczne (New Series), vol. 63, Birkh\"{a}user Verlag, Basel, 2003.
  
  \bibitem{Sea} J. Seade, {On Milnor's fibration theorem and its offspring after 50 years}, 
Bull. Amer. Math. Soc. (N.S.) 56 (2019), no. 2, 281--348. 

\bibitem{STV} J. Seade, M. Tib\u{a}r, A. Verjovsky, {\it Global Euler obstruction and polar invariants}, Math. Ann. 333 (2005), 393--403.

\bibitem{SSN} H. Stewenius, F. Schaffalitzky, D. Nister,  {\it How hard is 3-view triangulation really?}, in Tenth IEEE International Conference on Computer Vision (ICCV'05), volume 1, pages 686--693. IEEE, 2005.

\bibitem{Sturmfels}
B.~Sturmfels, {\it Beyond linear algebra}, arXiv:2108.09494.


\bibitem{SU2010}
B.~Sturmfels, C.~Uhler,
{\it Multivariate {G}aussian, semidefinite matrix completion, and convex algebraic geometry}, 
Ann. Inst. Statist. Math.  62 (2010), no. 4, 603--638.

\bibitem{SHA} Z. Sun, U.  Helmke, B. D. O. Anderson, {\it Rigid formation shape control in general dimensions: an invariance principle and open problems}, in 2015 54th IEEE Conference on Decision and Control (CDC), 6095--6100, Dec 2015.


\bibitem{Uhler}
C.~Uhler, 
{\it Geometry of maximum likelihood estimation in {G}aussian graphical  models}, 
Ann. Statist.  40 (2012), no. 1, 238--261.

\bibitem{Su} S. Sullivant, {\it Algebraic statistics}, Graduate Studies in Mathematics, volume 194, American Mathematical Society, Providence, RI, 2018.

\bibitem{TT} L. D. Tr\'ang,  B. Teissier, {\it Vari\'et\'es polaires locales et classes de Chern des vari\'et\'es singuli\`eres},
Ann. of Math. (2), 114 (1981), no 3, 457--491.

\bibitem{Wh1} H. Whitney, 
{\it Local properties of analytic varieties}, 1965 Differential and Combinatorial Topology (A Symposium in Honor of Marston Morse) pp. 205--244 Princeton Univ. Press, Princeton, N.J.

\bibitem{Wh2} H. Whitney, 
{\it Tangents to an analytic variety},  
Ann. of Math. (2) 81 (1965), 496--549. 

\bibitem{WZ} L. Wu, P. Zhou, 
{\it Log D-modules and index theorems}, 
{Forum Math. Sigma} {9} (2021), Paper No. e3, 32 pp.

\end{thebibliography}

\end{document}